\def\wh{\widehat}
\def\wt{\widetilde}
\def\R{\mathbb R}
\def\C{\mathbb C}
\def\Z{\mathbb Z}
\def\A{\mathcal A}
\def\O{\mathcal O}
\def\L{\mathcal L}
\def\u{\mathbf u}
\def\e{\mathbf e}
\def\f{\mathbf f}
\def\x{\mathbf x}
\def\v{\mathbf v}
\def\a{\mathbf a}
\def\s{\mathbf s}
\def\p{\mathbf p}
\def\r{\mathbf r}
\def\c{\mathbf c}
\def\q{\mathbf q}
\def\w{\mathbf w}
\def\z{\mathbf z}
\def\n{\mathbf n}
\def\l{\mathfrak l}
\def\VV{\mathbf V}
\def\D{\mathbf D}
\def\FF{\mathbf F}
\def\G{\mathbf G}
\def\1{\mathbf 1}
\def\K{\mathcal K}
\def\M{\mathcal M}
\def\CC{\mathbf C}
\def\O{\mathcal O}
\def\bxi{\boldsymbol \xi}
\def\bphi{\boldsymbol \phi}
\def\bpsi{\boldsymbol \psi}
\def\f{\boldsymbol \varphi}
\def\bzeta{\boldsymbol \zeta}
\def\1{\bold 1}
\def\rot{\mathrm{curl}\,}
\def\div{\mathrm{div}\,}
\def\rank{\mathrm{rank}\,}
\def\eps{\varepsilon}
\def\Dom{\mathrm{Dom}\,}
\def\le{\leqslant}
\theoremstyle{plain}
\newtheorem{theorem}{Theorem}[section]
\newtheorem{proposition}[theorem]{Proposition}
\newtheorem{lemma}[theorem]{Lemma}
\newtheorem{condition}[theorem]{Condition}
\newtheorem{corollary}[theorem]{Corollary}
\theoremstyle{definition}
\newtheorem{definition}[theorem]{Definition}
\newtheorem{remark}[theorem]{Remark}
\numberwithin{equation}{section}
\begin{document}

\dedicatory{To the memory of Mikhail Zaharovich Solomyak}

\title[Homogenization of a stationary Maxwell system]
{Homogenization of a stationary periodic \\ Maxwell system in a bounded domain 
\\ in the case of constant magnetic permeability}

\author{T.~A.~Suslina}

\address{St.~Petersburg State University
\\
Universitetskaya nab. 7/9
\\
199034, St. Petersburg, Russia}

\email{t.suslina@spbu.ru}

\subjclass[2000]{Primary 35B27}

\keywords{Periodic differential operators, Maxwell operator, homogenization, operator error estimates}

\thanks{Supported by Russian Science Foundation (project 17-11-01069)}

\begin{abstract}
In a bounded domain $\mathcal{O}\subset\mathbb{R}^3$ of class $C^{1,1}$, we consider a stationary Maxwell system with the boundary conditions of perfect conductivity. It is assumed that the magnetic permeability is given by a constant positive $(3\times 3)$-matrix $\mu_0$ and the dielectric permittivity is of the form  
$\eta(\x/\eps)$, where $\eta(\x)$ is a $(3 \times 3)$-matrix-valued function with real entries, periodic with respect to some lattice, bounded and positive definite. Here $\eps >0$ is the small parameter. Suppose that the equation involving the curl of the magnetic field intensity is homogeneous, and the right-hand side $\r$ of the second equation is a divergence-free vector-valued function of class $L_2$. It is known that, as $\eps \to 0$, the solutions of the Maxwell system, namely,  the electric field intensity $\u_\eps$, the electric displacement vector $\w_\eps$, the magnetic field intensity $\v_\eps$, and the magnetic displacement vector $\z_\eps$  weakly converge in  $L_2$ to the corresponding homogenized fields $\u_0$, $\w_0$, $\v_0$,~$\z_0$ (the solutions of the homogenized Maxwell system with effective coefficients). We improve the classical results. It is shown that  $\v_\eps$ and $\z_\eps$ converge to $\v_0$ and $\z_0$, respectively, in the $L_2$-norm, the error terms do not exceed $C \eps \|\r\|_{L_2}$. 
We also find approximations for $\v_\eps$ and $\z_\eps$ in the energy norm with error  
$C\sqrt{\eps} \|\r\|_{L_2}$. For $\u_\eps$ and $\w_\eps$ we obtain approximations in the  $L_2$-norm with error
$C\sqrt{\eps} \|\r\|_{L_2}$.
\end{abstract}

\maketitle

\section*{Introduction}

The paper concerns homogenization of periodic differential operators.
The literature on homogenization is very extensive; we  mention the books \cite{BeLPap, BaPa,Sa, ZhKO}. 

\subsection{Operator error estimates}

Let $\Gamma \subset \R^d$ be a lattice. For $\Gamma$-periodic functions in $\R^d$ we denote  
$$
f^\eps(\x):= f(\x/\eps),\quad \eps >0.
$$ 

In a series of papers \cite{BSu1,BSu2,BSu3} by Birman and Suslina, 
an operator-theoretic approach to homogenization theory was suggested and developed. 
This approach was applied to a wide class of matrix strongly elliptic second order operators $\A_\eps$ acting in $L_2(\R^d;\C^n)$ and admitting a factorization of the form 
\begin{equation}
\label{0.1}
\A_\varepsilon = b(\D)^* g^\eps(\x) b(\D).
\end{equation}
Here the matrix-valued function $g(\x)$ is bounded, positive definite, and periodic with respect to the lattice 
$\Gamma$. Next, $b(\D)$ is the matrix first order operator of the form 
$b(\D)= \sum_{j=1}^d b_j D_j$ such that its symbol has maximal rank. 
The simplest example of the operator \eqref{0.1}  is the scalar elliptic operator  
$$
\A_\eps = - \div g^\eps(\x) \nabla = \D^* g^\eps(\x)\D \text{ (the acoustics operator).}
$$
The elasticity operator also can be written in the form  \eqref{0.1}. 
In electrodynamics, the auxiliary operator 
$\A_\eps = \rot a^\eps(\x) \rot - \nabla \nu^\eps(\x) \div$ arises, 
it can be represented in the form \eqref{0.1}.

In \cite{BSu1}, it was shown that the resolvent $(\A_\eps +I)^{-1}$ 
converges in the operator norm in $L_2(\R^d;\C^n)$ to the resolvent of the \textit{effective operator} 
$\A^0= b(\D)^* g^0 b(\D)$, as  $\eps \to 0$. Here $g^0$ is a constant positive matrix called the
 \textit{effective matrix}. We have
 \begin{equation}
\label{0.2}
\| (\A_\varepsilon +I)^{-1} - (\A^0 +I)^{-1} \|_{L_2(\R^d) \to L_2(\R^d)} \le C \eps. 
\end{equation}
In \cite{BSu3}, approximation of the resolvent $(\A_\eps +I)^{-1}$ in the norm of operators acting from  $L_2(\R^d;\C^n)$ to the Sobolev space $H^1(\R^d;\C^n)$ was found:
\begin{equation}
\label{0.3}
\| (\A_\varepsilon +I)^{-1} - (\A^0 +I)^{-1} - \eps \K(\eps) \|_{L_2(\R^d) \to H^1(\R^d)} \le C \eps. 
\end{equation}
Here $\K(\eps)$ is the so called \textit{corrector}. It involves a rapidly oscillating factor, and so depends on $\eps$; herewith,
$$
\|\K(\eps)\|_{L_2 \to H^1}= O(\eps^{-1}).
$$

Estimates \eqref{0.2} and \eqref{0.3} are order-sharp. The results of such type are called  \textit{operator error estimates} in homogenization theory. A different approach to operator error estimates 
(the modified method of the first order approximation or the shift method) was suggested by Zhikov. 
In \cite{Zh1, ZhPas1}, estimates \eqref{0.2} and \eqref{0.3} for the acoustics and elasticity operators were obtained by this method. Further results are discussed in the survey \cite{ZhPas2}.

Operator error estimates were also studied for boundary value problems in a bounded domain $\O \subset \R^d$ with sufficiently smooth boundary;  
see \cite{ZhPas1, ZhPas2, Gr1, Gr2, KeLiS, PSu, Su13, Su_SIAM, Su15}.
Let $A_{D,\eps}$ and $A_{N,\eps}$ be the operators in $L_2(\O;\C^n)$ given by the expression 
$b(\D)^* g^\eps(\x) b(\D)$ with the Dirichlet or Neumann conditions on the boundary. Let $A_D^0$ and  $A_N^0$ be the corresponding effective operators. We have 
\begin{align}
\label{0.4}
& \| (A_{\flat,\varepsilon} +I)^{-1} - (A_\flat^0 +I)^{-1}  \|_{L_2(\O) \to L_2(\O)} \le C \eps, 
\\
\label{0.5}
& \| (A_{\flat,\varepsilon} +I)^{-1} - (A_\flat^0 +I)^{-1} - \eps K_\flat(\eps) \|_{L_2(\O) \to H^1(\O)} \le C \eps^{1/2}. 
\end{align}
Here $\flat = D,N$, and $K_\flat(\eps)$ is the corresponding corrector. Esrimate \eqref{0.4} is of sharp order $O(\eps)$ (as for the problem in $\R^d$).
The order of estimate \eqref{0.5} is worse than the order of  \eqref{0.3}; this is caused by the boundary influence. 

In \cite{ZhPas1}, by the shift method, estimate \eqref{0.5} and the analog of estimate \eqref{0.4} with error $O(\sqrt{\eps})$ were obtained for the acoustics and elasticity operators.   
Independently, similar results for the acoustics operator were obtained by Griso \cite{Gr1, Gr2} with the help of the unfolding method.  For the first time, sharp-order estimate \eqref{0.4} was proved in  \cite{Gr2}. 
The case of matrix elliptic operators was studied in \cite{KeLiS} (where uniformly elliptic operators  were considered under some regularity assumptions on the coefficients) and in  \cite{PSu, Su13, Su_SIAM, Su15} (where estimates \eqref{0.4} and \eqref{0.5} were obtained for the class of strongly elliptic operators described above). 

\subsection{Homogenization of the Maxwell system in $\R^3$\label{sec0.2}}

Now, we discuss the  homogenization problem for the stationary Maxwell system in $\R^3$.

Suppose that the dielectric permittivity and the magnetic permeability are given by the matrix-valued functions
$\eta^\eps(\x)$ and $\mu^\eps(\x)$, where $\eta(\x)$ and $\mu(\x)$ are bounded, positive definite, and periodic with respect to some lattice $\Gamma$. By~$J(\R^3)$ we denote the subspace  of vector-valued functions ${\mathbf f} \in L_2(\R^3; \C^3)$ such that $\div {\mathbf f} =0$ (in the sense of distributions).
Let $\u_\eps$ and $\v_\eps$ be the intensities of the electric and magnetic fields; $\w_\eps = \eta^\eps \u_\eps$
and $\z_\eps = \mu^\eps \v_\eps$ are the electric and magnetic displacement vectors.
We write the Maxwell operator~$\M_\eps$ in terms of the displacement vectors, assuming that $\w_\eps$ and $\z_\eps$ are divergence-free.
Then the operator  $\M_\eps$ acts in the space  $J(\R^3)\oplus J(\R^3)$ and is given by the expression
$$
\M_\eps = \begin{pmatrix}
0 & i \rot (\mu^\eps)^{-1} \cr - i \rot (\eta^\eps)^{-1} & 0
\end{pmatrix}
$$
on the natural domain. The operator $\M_\eps$ is selfadjoint, if $J(\R^3)\oplus J(\R^3)$ is considered as a subspace of the weighted space 
$L_2(\R^3;\C^3; (\eta^\eps)^{-1}) \oplus L_2(\R^3;\C^3; (\mu^\eps)^{-1})$. 
The point $\lambda =i$ is a regular point for the operator $\M_\eps$.

Let us discuss the question about the behavior of the resolvent $(\M_\eps - iI)^{-1}$ for small $\eps$. In other words, we are interested in the behavior of the solutions $(\w_\eps, \z_\eps)$ of the Maxwell system 
\begin{equation}
\label{0.6}
(\M_\varepsilon - iI) \begin{pmatrix} \w_\eps \cr \z_\eps \end{pmatrix} = \begin{pmatrix} \q \cr \r \end{pmatrix}, 
\quad  \q, \r \in J(\R^3;\C^3),
\end{equation}
and also in the behavior of the fields $\u_\eps = (\eta^\eps)^{-1}\w_\eps$ and $\v_\eps = (\mu^\eps)^{-1}\z_\eps$.

The homogenized Maxwell operator  $\M^0$ has the coefficients $\eta^0$ and $\mu^0$; 
it is well known that the effective matrices $\eta^0$ and $\mu^0$ are the same as for the  
scalar elliptic operators $-\div \eta^\eps \nabla$ and $-\div \mu^\eps \nabla$. Let
$(\w_0, \z_0)$ be the solution of the homogenized Maxwell system
\begin{equation*}
(\M^0 - iI) \begin{pmatrix} \w_0 \cr \z_0 \end{pmatrix} = \begin{pmatrix} \q \cr \r \end{pmatrix}. 
\end{equation*}
Let $\u_0 = (\eta^0)^{-1}\w_0$ and $\v_0 = (\mu^0)^{-1}\z_0$.
From the classical results (see, e.~g.,  \cite{BeLPap,  Sa, ZhKO}) it is known that, as $\eps \to 0$, the vector-valued functions $\u_\eps, \w_\eps, \v_\eps, \z_\eps$ 
\textit{weakly converge} in $L_2(\R^3;\C^3)$ to the corresponding homogenized fields
 $\u_0, \w_0, \v_0, \z_0$.

Operator error estimates for the Maxwell system \eqref{0.6} were studied in \cite[Chapter~7]{BSu1}, \cite[\S 14]{BSu2}, \cite[\S 22]{BSu3}, \cite{Su1,BSu4}, and \cite{Su2}.
In \cite{BSu1, BSu2, BSu3}, the case where $\mu = \1$ was considered and approximations were found 
not for all physical fields; in~\cite{Su1}, the general case was considered, but approximations were found not for all fields; in \cite{BSu4}, the problem was solved completely in the case of constant magnetic permeability; 
finally, in \cite{Su2}, a complete solution was achieved in the general case. The method was to reduce the problem to homogenization of some auxiliary second order equation. The solution of system \eqref{0.6} can be written as  $\w_\eps = \w_\eps^{(\q)}+\w_\eps^{(\r)}$, $\z_\eps = \z_\eps^{(\q)}+\z_\eps^{(\r)}$,
where $(\w_\eps^{(\q)},\z_\eps^{(\q)})$ is the solution of the system with $\r=0$, and 
$(\w_\eps^{(\r)},\z_\eps^{(\r)})$ is the solution of the system with $\q=0$. 
For instance, let us consider $(\w_\eps^{(\r)},\z_\eps^{(\r)})$. We substitute the first equation  
$\w_\eps^{(\r)} = \rot (\mu^\eps)^{-1} \z_\eps^{(\r)}$ in the second one and arrive at the following problem for 
$\z_\eps^{(\r)}$:
$$
\rot (\eta^\eps)^{-1}\rot (\mu^\eps)^{-1} \z_\eps^{(\r)} + \z_\eps^{(\r)} = i \r, \quad \div \z_\eps^{(\r)} =0. 
$$
Substituting $\f_\eps^{(\r)} = (\mu^\eps)^{-1/2}\z_\eps^{(\r)}$ and lifting the divergence-free condition, we see that $\f_\eps^{(\r)}$ is the solution of the second order elliptic equation 
\begin{equation}
\label{0.8}
\L_\eps \f_\eps^{(\r)}+ \f_\eps^{(\r)}= i (\mu^\eps)^{-1/2}\r,
\end{equation}
where
\begin{equation}
\label{0.9}
\L_\eps = (\mu^\eps)^{-1/2}\rot (\eta^\eps)^{-1}\rot (\mu^\eps)^{-1/2} - (\mu^\eps)^{1/2} \nabla \div (\mu^\eps)^{1/2}. 
\end{equation}
The field $\w_\eps^{(\r)}$ is expressed in terms of the derivatives of the solution: 
$$
\w_\eps^{(\r)} = \rot (\mu^\eps)^{-1/2} \f_\eps^{(\r)}.
$$

In the case of constant $\mu$, the operator \eqref{0.9} belongs to the class of operators~\eqref{0.1}, which allows one to apply general results of the papers \cite{BSu1, BSu2, BSu3} to equation \eqref{0.8}.
If $\mu$ is variable, this is not the case, but  it is possible to use the abstract scheme from \cite{BSu1, BSu2, BSu3} to study the operator  \eqref{0.9}; this was done in  \cite{Su1, Su2}. The result of these considerations was  approximation of the resolvent  ${(\M_\eps - iI)^{-1}}$. In contrast to the resolvent of the operator \eqref{0.1}, this resolvent has no limit in the operator norm, but it can be approximated by the sum of the resolvent $(\M^0 - iI)^{-1}$ and  some corrector of zero order (which weakly tends to zero); the error estimate is of sharp order $O(\eps)$. 
In terms of the solutions,  this implies approximations for all physical fields in the $L_2(\R^3;\C^3)$-norm with error estimates of order $O(\eps)$.
For instance, we write down the result for $\u_\eps$:
$$
\| \u_\eps - \u_0 - \u_\eps^{(1)}\|_{L_2(\R^3)} \le C \eps (\| \q\|_{L_2(\R^3)} + \| \r \|_{L_2(\R^3)}).
$$
Here $\u_\eps^{(1)}$ is interpreted as the zero order corrector; it is expressed in terms of $\u_0$, the solution of some ``correction'' Maxwell system, and some  rapidly oscillating factor. 
The weak limit of  $\u_\eps^{(1)}$ is equal to zero.

\subsection{Statement of the problem. Main results}
In the present paper, we study homogenization of the stationary Maxwell system  in a bounded domain $\O \subset \R^3$ of class  $C^{1,1}$. We rely on the general theory of the Maxwell operator in arbitrary domains developed in the papers \cite{BS1,BS2} by Birman and Solomyak.

Suppose that the magnetic permeability is given by the constant positive matrix $\mu_0$, and the dielectric permittivity is given by the oscillating matrix $\eta^\eps(\x)$. The boundary conditions of perfect conductivity are imposed. The notation for the physical fields is the same as above in Subsection \ref{sec0.2}. The Maxwell operator 
$M_\eps$, written in terms of the displacement vectors, acts in the space $J(\O) \oplus J_0(\O)$. Here 
$J(\O)$ and $J_0(\O)$ are the divergence-free subspaces of  $L_2(\O;\C^3)$ defined below in  
\eqref{5.1},~\eqref{5.2}. The operator $M_\eps$ is given by 
$$
M_\eps = \begin{pmatrix}
0 & i \rot \mu_0^{-1} \cr - i \rot (\eta^\eps)^{-1} & 0
\end{pmatrix}
$$
on the natural domain with  the boundary conditions taken into account (see \eqref{Dom_M_eps}  below). The operator $M_\eps$ is selfadjoint, if  $J(\O)\oplus J_0(\O)$ is considered as a subspace of the weighted space
$$
L_2(\O;\C^3; (\eta^\eps)^{-1}) \oplus L_2(\O;\C^3; \mu_0^{-1}).
$$ 

We study the resolvent $(M_\eps - iI)^{-1}$. In other words, we are interested in the behavior of the solutions $(\w_\eps, \z_\eps)$ of the Maxwell system 
\begin{equation}
\label{0.8a}
(M_\varepsilon - iI) \begin{pmatrix} \w_\eps \cr \z_\eps \end{pmatrix} = \begin{pmatrix} \q \cr \r \end{pmatrix}, 
\quad \q \in J(\O),\ \r \in J_0(\O),
\end{equation}
 and also in the behavior of the fields  $\u_\eps = (\eta^\eps)^{-1}\w_\eps$ and $\v_\eps = \mu_0^{-1}\z_\eps$.

Let $M^0$ be the homogenized Maxwell operator with the coefficients  $\eta^0$ and $\mu_0$.
The homogenized Maxwell system is of the form 
\begin{equation*}
(M^0 - iI) \begin{pmatrix} \w_0 \cr \z_0 \end{pmatrix} = \begin{pmatrix} \q \cr \r \end{pmatrix}.
\end{equation*} 
We put $\u_0 = (\eta^0)^{-1}\w_0$ and $\v_0 = \mu_0^{-1}\z_0$.
As for the problem in  $\R^3$, the classical results  (see \cite{BeLPap,  Sa, ZhKO}) give weak convergence in
$L_2(\O;\C^3)$ of the vector-valued functions $\u_\eps, \w_\eps, \v_\eps, \z_\eps$ to the corresponding homogenized fields $\u_0, \w_0, \v_0, \z_0$.

We improve the classical results in the case where $\q=0$. Let us describe our main results. If $\q=0$,  the fields $\v_\eps$ and $\z_\eps$ converge in the $L_2(\O;\C^3)$-norm to  
$\v_0$ and $\z_0$. The following sharp-order estimates hold:
$$
\begin{aligned}
\| \v_\eps - \v_0 \|_{L_2(\O)} \le C \eps \|\r\|_{L_2(\O)},
\\
\| \z_\eps - \z_0 \|_{L_2(\O)} \le C \eps \|\r\|_{L_2(\O)}.
\end{aligned}
$$
 In addition, we find approximations for   $\v_\eps$ and $\z_\eps$ in the $H^1(\O;\C^3)$-norm:
$$
\begin{aligned}
\big\| \v_\eps - \v_0 - \eps \v_\eps^{(1)}\big\|_{H^1(\O)} \le C \eps^{1/2} \|\r\|_{L_2(\O)},
\\
\big\| \z_\eps - \z_0 - \eps \z_\eps^{(1)} \big\|_{H^1(\O)} \le C \eps^{1/2} \|\r\|_{L_2(\O)}.
\end{aligned}
$$
Here the correctors $\v_\eps^{(1)}$ and $\z_\eps^{(1)}$ involve rapidly oscillating factors, their norms in  $H^1(\O;\C^3)$ are of order $O(\eps^{-1})$. 
Finally, we obtain approximations for  $\u_\eps$ and $\w_\eps$ in the $L_2(\O;\C^3)$-norm:
$$
\begin{aligned}
\big\| \u_\eps - \u_0 - \u_\eps^{(1)}\big\|_{L_2(\O)} \le C \eps^{1/2} \|\r\|_{L_2(\O)},
\\
\big\| \w_\eps - \w_0 - \w_\eps^{(1)} \big\|_{L_2(\O)} \le C \eps^{1/2} \|\r\|_{L_2(\O)}.
\end{aligned}
$$
The correction terms $\u_\eps^{(1)}$ and $\w_\eps^{(1)}$ can be interpreted as correctors of zero order, they weakly tend to zero. 

The case of system \eqref{0.8a} with $\r=0$ is more difficult and is not considered in the present paper. 

\subsection{The method} As for the problem in  $\R^3$, the method is based on reduction to the study of some auxiliary second order operator $L_\eps$.
First, we study this operator, and next we derive the results for the Maxwell system. 

The operator $L_\eps$ acts in $L_2(\O;\C^3)$ and is formally given by 
\begin{equation}
\label{0.12}
L_\eps = \mu_0^{-1/2} \rot (\eta^\eps(\x))^{-1} \rot \mu_0^{-1/2} - \mu_0^{1/2} \nabla \nu^\eps(\x)\div \mu_0^{1/2} 
\end{equation}
with the boundary conditions 
\begin{equation}
\label{0.13}
(\mu_0^{1/2} \f)_n\vert_{\partial \O} =0, \quad  ((\eta^\eps(\x))^{-1} \rot (\mu_0^{-1/2} \f))_\tau \vert_{\partial \O} =0.
\end{equation}
The precise definition of the operator $L_\eps$ is given in terms of the quadratic form.
For application to the Maxwell system, we can put ${\nu(\x)=1}$, but for  generality we study the operator \eqref{0.12} with variable coefficient  $\nu^\eps(\x)$.
The operator $L_\eps$ can be written in a factorized form $b(\D)^* g^\eps(\x)b(\D)$, but
the direct reference to the results of \cite{PSu, Su13, Su_SIAM} is impossible, since in those papers
the cases of the Dirichlet or Neumann boundary conditions were studied, and in the present case 
 the boundary conditions \eqref{0.13} are of mixed type. 
Therefore, we need to prove analogs of estimates \eqref{0.4} and \eqref{0.5} for the resolvent of the operator $L_\eps$. 

The method of proving such estimates is based on consideration of the associated problem in $\R^3$ and
using the results for this problem, introduction of the boundary layer correction term $\s_\eps$, and a careful analysis of this term. A crucial role is played by  using the 
Steklov smoothing operator (initially borrowed from \cite{Zh1, ZhPas1}), estimates in the  $\eps$-neighborhood of the boundary, and the duality arguments.

\subsection{The plan of the paper} The paper consists of five sections. In~Section \ref{Sec1},  
the model second order operator $\L_\eps$ in  $L_2(\R^3;\C^3)$ is considered;
the effective operator is constructed, and the known results about approximation of the resolvent \hbox{$(\L_\eps+I)^{-1}$}
are formulated.
In Section \ref{Sec2}, the model operator $L_\eps$ in $L_2(\O;\C^3)$ is introduced, the effective operator is described, and  some auxiliary statements  (about estimates in the $\eps$-neighborhood of the boundary) are given. 
In Section \ref{Sec3}, we formulate main results about approximation of the resolvent  ${(L_\eps +I)^{-1}}$ and 
give the first two steps of the proofs: the associated problem in  $\R^3$ is considered,
the boundary layer correction term $\s_\eps$ is introduced, and the proof of main theorems is reduced to  estimation of $\s_\eps$ in $H^1(\O;\C^3)$ and in $L_2(\O;\C^3)$.
In Section \ref{Sec4}, we obtain the required estimates for the norms of the correction term $\s_\eps$ and complete 
the proof of theorems from  Section 3.
Section \ref{Sec5} is devoted to homogenization of the stationary Maxwell system with $\q=0$. 
We reduce the problem to the question about the behavior of the resolvent of $L_\eps$.
The final result on  approximation for the solutions of the Maxwell system (Theorem \ref{th_Maxwell}) is obtained.

\subsection{Notation} 
Let $\mathfrak{H}$ and $\mathfrak{H}_*$ be complex separable Hilbert spaces. The symbols $(\,\cdot\, ,\,\cdot\,)_\mathfrak{H}$ and $\Vert \,\cdot\,\Vert _\mathfrak{H}$ stand for the inner product and the norm in $\mathfrak{H}$; the symbol $\Vert \,\cdot\,\Vert _{\mathfrak{H}\rightarrow\mathfrak{H}_*}$ denotes the norm of a linear continuous operator acting from $\mathfrak{H}$ to $\mathfrak{H}_*$.

The symbols $\langle \,\cdot\, ,\,\cdot\,\rangle$ and $\vert \,\cdot\,\vert$ stand for the inner product and the norm in  $\mathbb{C}^n$; $\mathbf{1}= \mathbf{1}_n$ is the identity $(n\times n)$-matrix. If $a$ is an $(n\times n)$-matrix, then $\vert a\vert$ denotes the norm of $a$ as a linear operator in $\mathbb{C}^n$. 
We denote $\mathbf{x}=(x_1,x_2, x_3)\in\mathbb{R}^3$, $iD_j=\partial _j =\partial /\partial x_j$, $j=1,2,3$, $\mathbf{D}=-i\nabla=(D_1,D_2,D_3)$. The classes $L_p$ of $\mathbb{C}^n$-valued functions in a domain $\mathcal{O}\subset\mathbb{R}^3$ are denoted by $L_p(\mathcal{O};\mathbb{C}^n)$, $1\leqslant p\leqslant \infty$. The Sobolev spaces of $\mathbb{C}^n$-valued functions in a domain  $\mathcal{O}$ are denoted by $H^s(\mathcal{O};\mathbb{C}^n)$. Next, $H^1_0(\mathcal{O};\mathbb{C}^n)$ is the closure of  $C_0^\infty (\mathcal{O};\mathbb{C}^n)$ in  $H^1(\mathcal{O};\mathbb{C}^n)$. If $n=1$, we write simply $L_p(\mathcal{O})$, $H^s(\mathcal{O})$, etc., but sometimes we use such simple notation for the spaces of vector-valued or matrix-valued functions. 
Various constants in estimates are denoted by $c$, $\mathfrak c$, $C$, $\mathcal{C}$, $\mathfrak{C}$ 
(possibly, with indices and marks).

\subsection{}
The author plans to devote a separate paper to more general problem 
about homogenization of the stationary Maxwell system in a bounded domain in the case where both coefficients are given by the rapidly oscillating periodic matrix-valued functions.
Problem \eqref{0.8a} with $\r=0$ (which is not considered in the present paper) will be a particular case of this more general problem. 

The author is grateful to N.~D.~Filonov for  consultation concerning the properties of the Maxwell operator and useful comments.

\section{The model second order operator in $\R^3$\label{Sec1}}

\subsection{Lattice\label{Sec1.1}} Let $\Gamma\subset\mathbb{R}^3$ be a lattice generated by the basis 
 $\a_1, \a_2, \a_3$, i.~e., 
$$
\Gamma = \big\{ \a \in \R^3:\ \a = z_1 \a_1 + z_2 \a_2 + z_3 \a_3,\ z_j \in \Z\big\}. 
$$
By $\Omega \subset \R^3$ we denote the elementary cell of the lattice $\Gamma$:
$$
\Omega = \big\{ \x \in \R^3:\ \x = t_1 \a_1 + t_2 \a_2 + t_3 \a_3,\ -1/2 < t_j < 1/2 \big\}. 
$$
For $\Gamma$-periodic functions $f(\x)$ in $\mathbb{R}^3$, we use the notation $f^\varepsilon (\mathbf{x}):=f(\mathbf{x}/\varepsilon)$, where $\varepsilon >0$.
For periodic square matrix-valued functions $f(\x)$, we denote
$$
\overline{f}:=\vert \Omega\vert ^{-1}\int\limits_\Omega f(\mathbf{x})\,d\mathbf{x}, \quad \underline{f}:=\Big(\vert \Omega\vert ^{-1}\int\limits_\Omega f(\mathbf{x})^{-1}\,d\mathbf{x}
\Big)^{-1}.
$$
Here, in the definition of $\overline{f}$ it is assumed that  $f \in L_{1,\text{loc}}(\R^3)$, and in the definition of  $\underline{f}$ it is assumed that the matrix $f(\x)$ is non-degenerate and  
$f^{-1} \in L_{1,\text{loc}}(\R^3)$.

By $\widetilde{H}^1(\Omega;\C^n)$ we denote the subspace of functions in $H^1(\Omega;\C^n)$, whose $\Gamma$-periodic extension  to  $\mathbb{R}^3$ belongs to  $H^1_{\mathrm{loc}}(\mathbb{R}^3;\C^n)$.

\subsection{The Steklov smoothing\label{Sec1.2a}} 
 The operator $S_\varepsilon^{(k)}\!,$ ${\varepsilon \!>\!0}$, acting in $L_2(\mathbb{R}^3;\mathbb{C}^k)$ (where $k\in\mathbb{N}$) and given by  
\begin{equation}
\label{S_eps}
\begin{split}
(S_\varepsilon^{(k)} \mathbf{u})(\mathbf{x})=\vert \Omega \vert ^{-1}\int\limits_\Omega \mathbf{u}(\mathbf{x}-\varepsilon \mathbf{z})\,d\mathbf{z},\quad \mathbf{u}\in L_2(\mathbb{R}^3;\mathbb{C}^k),
\end{split}
\end{equation}
is called the  \textit{Steklov smoothing operator}.
We omit the index $k$ and write simply $S_\varepsilon$. Obviously,
$S_\varepsilon \mathbf{D}^\alpha \mathbf{u}=\mathbf{D}^\alpha S_\varepsilon \mathbf{u}$ for  $\mathbf{u}\in H^\sigma(\mathbb{R}^3;\mathbb{C}^k)$ and any multiindex $\alpha$ such that $\vert \alpha\vert \leqslant \sigma$.
Note that  
\begin{equation}
\label{S_eps <= 1}
\Vert S_\varepsilon \Vert _{L_2(\mathbb{R}^3)\rightarrow L_2(\mathbb{R}^3)}\leqslant 1.
\end{equation}
We need the following properties of the operator $S_\varepsilon$
(see \cite[Lemmas~1.1 and~1.2]{ZhPas1} or \cite[Propositions 3.1 and 3.2]{PSu}).

\begin{proposition}
\label{prop_Seps - I}
For any function  $\mathbf{u}\in H^1(\mathbb{R}^3;\mathbb{C}^k)$, we have  
\begin{equation*}
\Vert S_\varepsilon \mathbf{u}-\mathbf{u}\Vert _{L_2(\mathbb{R}^3)}\leqslant \varepsilon r_1\Vert \mathbf{D}\mathbf{u}\Vert _{L_2(\mathbb{R}^3)},
\end{equation*}
where $2r_1=\mathrm{diam}\,\Omega$.
\end{proposition}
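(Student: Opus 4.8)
The plan is to prove the estimate via the Fourier transform, exploiting that $S_\eps$ is a Fourier multiplier. By density of $C_0^\infty(\R^3;\C^k)$ in $H^1(\R^3;\C^k)$ and the bound \eqref{S_eps <= 1}, it suffices to establish the inequality for smooth compactly supported $\u$; the general case then follows by approximation. For such $\u$, applying the Fourier transform to \eqref{S_eps} and changing the order of integration gives $\widehat{S_\eps \u}(\bxi) = m_\eps(\bxi)\, \widehat{\u}(\bxi)$, where the multiplier is
$$
m_\eps(\bxi) = |\Omega|^{-1}\int_\Omega e^{-i\eps \langle \bxi, \z\rangle}\, d\z.
$$

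The key point is the pointwise bound $|m_\eps(\bxi) - 1| \le \eps r_1 |\bxi|$. Indeed, $m_\eps(\bxi) - 1 = |\Omega|^{-1}\int_\Omega \big(e^{-i\eps\langle\bxi,\z\rangle} - 1\big)\, d\z$, and using $|e^{i\theta}-1| \le |\theta|$ for real $\theta$ we get $|m_\eps(\bxi)-1| \le |\Omega|^{-1}\int_\Omega \eps |\bxi|\,|\z|\, d\z$. Here I would use that the cell $\Omega$ is symmetric about the origin: if $\z \in \Omega$ then $-\z \in \Omega$, hence $2|\z| = |\z - (-\z)| \le \mathrm{diam}\,\Omega = 2 r_1$, so $|\z| \le r_1$ for all $\z \in \Omega$. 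This yields $|m_\eps(\bxi) - 1| \le \eps r_1 |\bxi|$.

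Combining this with the Plancherel identity finishes the proof:
$$
\| S_\eps \u - \u\|_{L_2(\R^3)}^2 = \int_{\R^3} |m_\eps(\bxi) - 1|^2\, |\widehat{\u}(\bxi)|^2\, d\bxi \le \eps^2 r_1^2 \int_{\R^3} |\bxi|^2\, |\widehat{\u}(\bxi)|^2\, d\bxi = \eps^2 r_1^2 \|\D\u\|_{L_2(\R^3)}^2,
$$
where the last equality uses $\widehat{\D\u}(\bxi) = \bxi\,\widehat{\u}(\bxi)$. (Alternatively, one can avoid the Fourier transform entirely: write $\u(\x - \eps\z) - \u(\x) = -\eps\int_0^1 \langle \z, \nabla\u(\x - t\eps\z)\rangle\, dt$, apply the Cauchy--Schwarz inequality in $t$, integrate in $\x$ using translation invariance of the $L_2$-norm to get $\|\u(\cdot - \eps\z) - \u\|_{L_2} \le \eps |\z|\, \|\D\u\|_{L_2} \le \eps r_1 \|\D\u\|_{L_2}$, and then use the Minkowski integral inequality in the averaging over $\z \in \Omega$ together with $|\Omega|^{-1}\int_\Omega d\z = 1$.) There is no real obstacle here; the only point requiring care is the geometric observation $|\z| \le r_1$ on $\Omega$, which is exactly what fixes the sharp constant $r_1 = \tfrac12 \mathrm{diam}\,\Omega$.
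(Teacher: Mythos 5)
Your proof is correct: the Fourier multiplier computation, the pointwise bound $|m_\eps(\bxi)-1|\le\eps r_1|\bxi|$ via $|e^{i\theta}-1|\le|\theta|$ together with the observation that the central symmetry of $\Omega$ gives $|\z|\le r_1$ for $\z\in\Omega$, and the concluding Plancherel step are all sound (as is the alternative direct argument via the Newton--Leibniz formula and the Minkowski integral inequality). The paper itself gives no proof of this proposition, referring instead to \cite{ZhPas1} and \cite{PSu}; the argument there is essentially the same Fourier-multiplier computation, so your proposal reproduces the standard proof, including the sharp constant.
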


\begin{proposition}
\label{prop f^eps S_eps}
Let $f$ be a $\Gamma$-periodic function in $\mathbb{R}^3$ such that $f \in L_2(\Omega)$. 
Let $[f ^\varepsilon ]$ be the operator of multiplication by the function $f^\eps(\x)$. 
Then the operator $[f ^\varepsilon ]S_\varepsilon $ is continuous in $L_2(\mathbb{R}^3)$ and 
\begin{equation*}
\Vert [f^\varepsilon]S_\varepsilon \Vert _{L_2(\mathbb{R}^3)\rightarrow L_2(\mathbb{R}^3)}\leqslant \vert \Omega \vert ^{-1/2}\Vert f \Vert _{L_2(\Omega)}.
\end{equation*}
\end{proposition}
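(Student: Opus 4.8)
The plan is to establish the bound by a direct computation based on the Cauchy--Schwarz inequality and Tonelli's theorem, using the periodicity of $f$. First I would rewrite the Steklov average over a shifted copy of the cell: substituting $\y = \x - \eps\z$ in \eqref{S_eps} gives
\[
(S_\eps \mathbf{u})(\x) = |\eps\Omega|^{-1}\int_{\x - \eps\Omega}\mathbf{u}(\y)\,d\y ,
\]
so that $(S_\eps\mathbf{u})(\x)$ is the mean value of $\mathbf{u}$ over the cell-shaped set $\x - \eps\Omega$. Applying the Cauchy--Schwarz inequality to this average yields the pointwise estimate
\[
|(S_\eps\mathbf{u})(\x)|^2 \le |\eps\Omega|^{-1}\int_{\x - \eps\Omega}|\mathbf{u}(\y)|^2\,d\y .
\]

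Next I would multiply by $|f^\eps(\x)|^2$, integrate over $\x\in\R^3$, and interchange the order of integration (everything being nonnegative). Since the condition $\y\in\x-\eps\Omega$ is equivalent to $\x\in\y+\eps\Omega$, this gives
\[
\int_{\R^3}|f^\eps(\x)|^2|(S_\eps\mathbf{u})(\x)|^2\,d\x \le |\eps\Omega|^{-1}\int_{\R^3}|\mathbf{u}(\y)|^2\Big(\int_{\y+\eps\Omega}|f^\eps(\x)|^2\,d\x\Big)d\y .
\]
The key point is then that the inner integral is independent of $\y$: rescaling $\x = \eps\w$ turns it into $\eps^3\int_{\y/\eps+\Omega}|f(\w)|^2\,d\w$, and since $|f|^2$ is $\Gamma$-periodic while $\Omega$ is a fundamental domain of $\Gamma$, the integral of a periodic function over any translate of $\Omega$ coincides with its integral over $\Omega$, i.e. $\int_{\y/\eps+\Omega}|f|^2 = \|f\|_{L_2(\Omega)}^2$. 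Substituting this back and using $|\eps\Omega| = \eps^3|\Omega|$, the factors of $\eps^3$ cancel, and taking square roots one arrives at $\|f^\eps S_\eps\mathbf{u}\|_{L_2(\R^3)} \le |\Omega|^{-1/2}\|f\|_{L_2(\Omega)}\|\mathbf{u}\|_{L_2(\R^3)}$, which is the claimed bound and in particular shows that $[f^\eps]S_\eps$ is continuous on $L_2(\R^3)$.

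The only mildly delicate point -- though it is completely standard -- is the identity $\int_{\c+\Omega}g = \int_\Omega g$ for a $\Gamma$-periodic $g\in L_{1,\mathrm{loc}}(\R^3)$ and an arbitrary shift $\c$; this is obtained by partitioning $\c+\Omega$ into the pieces $(\c+\Omega)\cap(\a+\Omega)$, $\a\in\Gamma$, translating each piece back by $-\a$, and observing that the translated pieces tile $\Omega$ up to a set of measure zero. I do not expect any genuine obstacle: the averaging of $S_\eps$ over a full period cell is precisely what forces the rapidly oscillating factor $f^\eps$ to enter only through its $L_2(\Omega)$-mean, and the rest is bookkeeping with Tonelli's theorem.
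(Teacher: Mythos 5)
Your proof is correct, and it is essentially the standard argument: the paper itself gives no proof of this proposition but refers to \cite[Lemmas~1.1 and~1.2]{ZhPas1} and \cite[Propositions 3.1 and 3.2]{PSu}, where exactly this combination of Cauchy--Schwarz on the cell average, Tonelli, and the translation invariance of $\int_{\c+\Omega}|f|^2$ for $\Gamma$-periodic $f$ is used. All steps, including the change of variables showing the inner integral equals $\eps^3\|f\|_{L_2(\Omega)}^2$ independently of $\y$, check out.
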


\subsection{Definition of the operator $\mathcal{L}_\eps$\label{Sec1.2}}
Suppose that $\mu_0$ is a symmetric  positive $(3\times 3)$-matrix with real entries.
Suppose that a symmetric $(3\times 3)$-matrix-valued function $\eta(\x)$ with real entries  and a real-valued function $\nu(\x)$ are periodic with respect to the lattice $\Gamma$ and such that 
\begin{equation}\label{cond_eta_nu}
\eta, \eta^{-1} \in L_\infty,\quad \eta(\x)>0;\quad 
\nu, \nu^{-1} \in L_\infty,\quad \nu(\x)>0.
\end{equation}

In $L_2(\R^3;\mathbb{C}^3)$, we consider the operator $\L_\eps$ given formally by the differential expression 
\begin{equation}\label{oper_L_eps}
{\L}_\eps = \mu_0^{-1/2} \rot (\eta^\eps(\x))^{-1} \rot \mu_0^{-1/2} - \mu_0^{1/2} \nabla \nu^\eps(\x) \div \mu_0^{1/2}.
\end{equation}
The operator $\L_\eps$ belongs to the class of operators admitting a factorization of the form  \eqref{0.1}, i.~e.,
$\L_\eps = b(\D)^* g^\eps(\x) b(\D)$. This class was studied in the papers \cite{BSu1,BSu2,BSu3}.
In our case,  $g(\x)$ is the $(4\times 4)$-matrix-valued function, and $b(\D)$ is the $(4\times 3)$-matrix first order differential operator. Namely, 
\begin{equation}\label{def}
b(\D) = \begin{pmatrix} -i \rot \mu_0^{-1/2} \cr -i \div \mu_0^{1/2}\end{pmatrix},
\quad 
g(\x) = \begin{pmatrix} \eta(\x)^{-1} & 0 \cr 0 & \nu(\x) \end{pmatrix}.
\end{equation}
From \eqref{cond_eta_nu} it follows that the matrix $g(\x)$ is positive definite and bounded. 
Obviously,
$$
\|g\|_{L_\infty} = \max \big\{ \|\eta^{-1}\|_{L_\infty}; \|\nu \|_{L_\infty}\big\},\quad 
\|g^{-1}\|_{L_\infty} = \max \big\{ \|\eta\|_{L_\infty}; \|\nu^{-1} \|_{L_\infty}\big\}.
$$
The operator $b(\D)$ can be written as  $b(\D)= \sum_{j=1}^3 b_j D_j$, where $b_j$ are constant matrices. 
The symbol $b(\bxi)= \sum_{j=1}^3 b_j \xi_j$ of the operator $b(\D)$ is given by  
$$
b(\bxi) = \begin{pmatrix} r(\bxi) \mu_0^{-1/2} \cr \bxi^t \mu_0^{1/2} \end{pmatrix},
\quad r(\bxi) = \begin{pmatrix} 0 & -\xi_3 & \xi_2 \cr \xi_3 & 0 & -\xi_1 \cr -\xi_2 & \xi_1 & 0 \end{pmatrix},
\quad \bxi^t = \begin{pmatrix} \xi_1 & \xi_2 & \xi_3 \end{pmatrix}.
$$
We have
\begin{equation}\label{rank_cond}
\rank b(\bxi) =3, \quad 0 \ne \bxi \in \R^3. 
\end{equation}
This condition is equivalent to the estimates 
\begin{equation}\label{bb_eps}
\alpha_0 \1_3 \le b(\bxi)^* b(\bxi) \le  \alpha_1 \1_3, \quad |\bxi|=1,
\end{equation}
with positive constants $\alpha_0$ and $\alpha_1$.
It is easy to check these estimates with the constants 
$$
\alpha_0 = \min \big\{ |\mu_0|^{-1}; |\mu_0^{-1}|^{-1} \big\}, \quad \alpha_1 = |\mu_0| + |\mu_0^{-1}|. 
$$

The precise definition of the operator $\L_\eps$ is given in terms of the quadratic form
\begin{equation*}
\begin{split}
&{\l}_\eps[\f,\f] := \intop_{\R^3} \langle g^\eps(\x) b(\D) \f, b(\D) \f \rangle \, d\x
\\
&=\intop_{\R^3} \left( \langle (\eta^\eps(\x))^{-1} \rot (\mu_0^{-1/2}\f), \rot (\mu_0^{-1/2}\f)\rangle + \nu^\eps(\x) |\div (\mu_0^{1/2}\f)|^2\right) \, d\x, 
\quad\f \in H^1(\R^3;\C^3).
\end{split}
\end{equation*}
Under our assumptions, the following two-sided estimates hold: 
\begin{equation}\label{form_l_eps_est}
\begin{split}
c_1 \| \D \f \|^2_{L_2(\R^3)} \le {\l}_\eps[\f,\f] \le c_2 \| \D \f \|^2_{L_2(\R^3)}, \quad \f \in H^1(\R^3;\C^3),
\\
c_1 = \alpha_0 \|g^{-1}\|^{-1}_{L_\infty}, \quad  c_2 = \alpha_1 \|g\|_{L_\infty}.
\end{split}
\end{equation}
Thus, the form $\l_\eps$ is closed and nonnegative. The selfadjoint operator in  $L_2(\R^3;\C^3)$ generated by this form is denoted by $\L_\eps$.

\subsection{The effective operator $\L^0$\label{Sec1.2a}} 
According to the general rules,  we define the \textit{effective operator} 
\begin{equation}\label{L0}
\L^0 = b(\D)^* g^0 b(\D),
\end{equation}
where $g^0$ is a constant positive matrix called the  \textit{effective matrix}. It is defined in terms of 
the solution of the auxiliary problem on the cell $\Omega$.
Let $\Lambda(\x)$ be a $(3\times 4)$-matrix-valued function which is a  $\Gamma$-periodic solution of the problem
\begin{equation}\label{Lambda_eq}
b(\D)^* g(\x) (b(\D) \Lambda(\x) + \1) =0, \quad \intop_\Omega \Lambda(\x) \, d\x =0.
\end{equation}
Then  
\begin{equation}\label{eff_matrix}
g^0 = |\Omega|^{-1} \intop_\Omega \wt{g}(\x) \, d\x, \quad 
\wt{g}(\x) := g(\x) (b(\D) \Lambda(\x) + \1).
\end{equation}
It is easy to check that
\begin{equation}\label{Lambda_est}
\| \Lambda \|_{H^1(\Omega)} \le {\mathfrak C}_\Lambda |\Omega|^{1/2}, 
\end{equation}
where the constant ${\mathfrak C}_\Lambda$ depends only on $|\mu_0|$, $|\mu_0^{-1}|$, $\|\eta\|_{L_\infty}$, $\|\eta^{-1}\|_{L_\infty}$, 
$\|\nu\|_{L_\infty}$, $\|\nu^{-1}\|_{L_\infty}$, and the parameters of the lattice $\Gamma$.

The effective operator for $\L_\eps$ was constructed in  \cite[Chapter 7]{BSu1} in the case where $\mu_0=\1$ 
and in \cite{BSu4} in the general case. For completeness, we repeat the corresponding constructions. 

Let us find the matrix $\Lambda(\x)$. Let $\e_j$, $j=1,2,3,4,$ be the standard orthonormal basis in $\C^4$ and let  
$\wt{\e}_j$, $j=1,2,3,$ be the standard orthonormal basis in  $\C^3$. 
Let  $\CC = \sum_{j=1}^4 C_j \e_j \in \C^4$. The vector
$\wt{\CC} = \sum_{j=1}^3 C_j \wt{\e}_j \in \C^3$ corresponds to $\CC$.
The vector-valued function $\v = \Lambda \CC \in \wt{H}^1(\Omega;\C^3)$ is the solution of the equation  
$b(\D)^* g(\x) (b(\D) \v(\x) + \CC) =0$ which now takes the form
$$
 \mu_0^{-1/2} \rot (\eta(\x))^{-1} \left( \rot (\mu_0^{-1/2} \v) + i \wt{\CC} \right)  
- \mu_0^{1/2} \nabla \nu(\x) \left( \div (\mu_0^{1/2} \v) + i C_4 \right)=0.
$$
In other words, $\v  \in \wt{H}^1(\Omega;\C^3)$ satisfies the identity
\begin{equation}\label{v_eq}
\begin{split}
&\intop_{\Omega} \big\langle (\eta(\x))^{-1} (\rot (\mu_0^{-1/2}\v) + i \wt{\CC}), \rot (\mu_0^{-1/2}\z) \big\rangle \,d\x
\\
   &+\intop_{\Omega}  \nu(\x) \left(\div (\mu_0^{1/2}\v) + i C_4 \right) \overline{\div (\mu_0^{1/2}\z)}     \, d\x =0, 
\quad \z \in \wt{H}^1(\Omega;\C^3).
\end{split}
\end{equation}
Using decomposition $\mu_0^{-1/2} \z = {\mathbf f} + \nabla h$, where 
${\mathbf f} \in \wt{H}^1(\Omega;\C^3)$ and $\div (\mu_0 \mathbf{f})=0$
(the Weyl decomposition), we write identity \eqref{v_eq} with $\z = \mu_0^{1/2} {\mathbf f}$. 
Then the second term in  \eqref{v_eq} is equal to zero. Since $\rot \mathbf{f} = \rot (\mu_0^{-1/2} \z)$, we arrive at 
\begin{equation}\label{v_eq2}
\intop_{\Omega} \big\langle (\eta(\x))^{-1}\big (\rot (\mu_0^{-1/2}\v) + i \wt{\CC}\big),\  \rot (\mu_0^{-1/2}\z) \big\rangle \,d\x,
\quad \z \in \wt{H}^1(\Omega;\C^3).
\end{equation}
From \eqref{v_eq2} it follows that
\begin{equation*}
(\eta(\x))^{-1} \big(\rot (\mu_0^{-1/2}\v(\x)) + i \wt{\CC}\big)  =
i \big(\nabla \Phi(\x) + \c\big)
\end{equation*}
with some $\Phi \in \wt{H}^1(\Omega)$ and $\c \in \C^3$. Hence,
\begin{equation}\label{v_eq4}
\rot \big(\mu_0^{-1/2}\v(\x)\big) + i \wt{\CC}  =
i \eta(\x)\big(\nabla \Phi(\x) + \c\big). 
\end{equation}
By \eqref{v_eq4}, 
\begin{equation*}
\intop_{\Omega} \big\langle 
\eta(\x)\big(\nabla \Phi(\x) + \c\big) , \nabla F(\x) \big\rangle \, d\x =0,
\quad F \in \wt{H}^1(\Omega).
\end{equation*}
Thus, $\Phi \in \wt{H}^1(\Omega)$ is the solution of the equation
\begin{equation}\label{v_eq6}
\div \eta(\x)(\nabla \Phi(\x) + \c)  =0. 
\end{equation}
Recalling the definition of the effective matrix $\eta^0$ for the operator $-\div \eta(\x) \nabla$, we have
\begin{equation}\label{eta0}
\eta^0 \c = |\Omega|^{-1}\intop_{\Omega}  \eta(\x)(\nabla \Phi(\x) + \c)\, d\x. 
\end{equation}
Integrating \eqref{v_eq4} and using  \eqref{eta0}, we find 
\begin{equation}\label{relation}
\wt{\CC}= \eta^0 \c.
\end{equation}

On the other hand, \eqref{v_eq} and \eqref{v_eq2} imply that 
\begin{equation*}
 \intop_{\Omega}  \nu(\x) \Big(\div (\mu_0^{1/2}\v) + i C_4 \Big) \overline{\div (\mu_0^{1/2}\z)}     \, d\x =0, 
\quad \z \in \wt{H}^1(\Omega;\C^3).
\end{equation*}
This means that there exists a constant  $\alpha \in \C$ such that  
\begin{equation}\label{v_eq8}
\nu(\x) \left(\div (\mu_0^{1/2}\v) + i C_4 \right) = i \alpha.
\end{equation}
Multiplying  \eqref{v_eq8} by $\nu(\x)^{-1}$ and integrating, we obtain  
$C_4 |\Omega|  = \alpha \int_\Omega \nu(\x)^{-1}\, d\x.$ Hence, 
\begin{equation}\label{v_eq8a}
\alpha = \underline{\nu} C_4.
\end{equation}

Substituting $\CC = \e_j$, we find the columns  $\v_j(\x) = \Lambda(\x)\e_j$ of the matrix $\Lambda(\x)$. 
From \eqref{v_eq4},  \eqref{relation}, \eqref{v_eq8}, and \eqref{v_eq8a}  it follows that 
for  $j=1,2,3$ the column $\v_j \in \wt{H}^1(\Omega;\C^3)$ is a periodic solution of the problem
\begin{equation}\label{v_eq9}
\begin{split}
\rot (\mu_0^{-1/2}\v_j(\x))   = i \eta(\x)(\nabla \Phi_j(\x) + \c_j) -  i \wt{\e}_j,
\\
\div (\mu_0^{1/2}\v_j(\x))=0, \quad \intop_\Omega \v_j(\x)\,d\x =0.
\end{split}
\end{equation}
Here $\c_j = (\eta^0)^{-1} \wt{\e}_j$, and $\Phi_j \in \wt{H}^1(\Omega)$ is the solution of equation 
 \eqref{v_eq6} with $\c = \c_j$.
The solution of problem \eqref{v_eq9} can be represented as 
\begin{equation*}
\v_j(\x) = i \mu_0^{-1/2} \rot \p_j(\x), 
\end{equation*}
where $\p_j \in \wt{H}^1(\Omega;\C^3)$ is the solution of the problem  
\begin{equation}\label{v_eq11}
\begin{split}
\rot (\mu_0^{-1} \rot \p_j(\x))  & =  \eta(\x)(\nabla \Phi_j(\x) + \c_j) -  \wt{\e}_j,
\\
\div \p_j (\x) &=0, \quad \intop_\Omega \p_j(\x) \, d\x =0.
\end{split}
\end{equation}

For $\CC = \e_4$, relations \eqref{v_eq4},  \eqref{relation}, \eqref{v_eq8}, and \eqref{v_eq8a} show that 
 $\v_4 \in \wt{H}^1(\Omega;\C^3)$ is the periodic solution of the problem  
\begin{equation*}
\begin{split}
\rot (\mu_0^{-1/2}\v_4(\x))   = 0,
\quad
\div (\mu_0^{1/2}\v_4(\x))= i \left( \underline{\nu} \nu(\x)^{-1} - 1 \right), \quad \intop_\Omega \v_4(\x)\, d\x=0.
\end{split}
\end{equation*}
Hence, $\v_4(\x)= i \mu_0^{1/2} \nabla \rho(\x)$, where  $\rho(\x)$ is the $\Gamma$-periodic solution of the equation
\begin{equation}\label{v_eq13}
- \div (\mu_0 \nabla \rho (\x))= 1 -  \underline{\nu} \nu(\x)^{-1}.
\end{equation}
 
Thus, the matrix $\Lambda(\x)$ takes the form 
\begin{equation}\label{v_eq13a}
 \Lambda (\x) = i \begin{pmatrix} \mu_0^{-1/2} \Psi(\x) & \mu_0^{1/2} \nabla \rho(\x) \end{pmatrix},
\end{equation}
where $\Psi(\x)$ is the $(3\times 3)$-matrix with the columns $\rot \p_j(\x)$, $j=1,2,3$. 
The matrix $\wt{g}(\x)= g(\x)(b(\D) \Lambda(\x) + \1)$ is represented as 
$$
\wt{g}(\x)= \begin{pmatrix} \eta(\x)^{-1} (\rot (\mu_0^{-1}\Psi(\x)) + \1_3) & 0 \cr 0 & \nu(\x) (\div (\mu_0 \nabla \rho(\x)) +1) \end{pmatrix}.
$$
By \eqref{v_eq11} and \eqref{v_eq13}, 
\begin{equation}\label{v_eq14}
 \wt{g}(\x) =  \begin{pmatrix} \Sigma(\x) + (\eta^0)^{-1}   & 0 \cr 0 &  \underline{\nu} \end{pmatrix},
\end{equation}
where $\Sigma(\x)$ is the $(3 \times 3)$-matrix with the columns $\nabla \Phi_j(\x)$, $j=1,2,3$. 
Together with  \eqref{eff_matrix}, this implies
\begin{equation}\label{v_eq15}
 {g}^0  =  \begin{pmatrix}  (\eta^0)^{-1}   & 0 \cr 0 &  \underline{\nu} \end{pmatrix}.
\end{equation}
Consequently, the effective operator \eqref{L0} is given by the differential expression 
\begin{equation}\label{oper_effective}
{\L}^0 = \mu_0^{-1/2} \rot (\eta^0)^{-1} \rot \mu_0^{-1/2} - \mu_0^{1/2} \nabla \underline{\nu} \div \mu_0^{1/2}
\end{equation}
on the domain $H^2(\R^3;\C^3)$.

The following estimates for the effective coefficients are well known:
\begin{equation}\label{eff_coeff_est}
\begin{split}
|\eta^0| &\le \| \eta \|_{L_\infty},\quad | (\eta^0)^{-1}| \le \| \eta^{-1} \|_{L_\infty},
\\ |\underline{\nu} | &\le \| \nu \|_{L_\infty},\ \quad |(\underline{\nu})^{-1}| \le \| \nu^{-1} \|_{L_\infty}.
\end{split}
\end{equation}
The symbol of the effective operator is given by 
$$
a(\bxi) = \mu_0^{-1/2} r(\bxi)^t (\eta^0)^{-1} r(\bxi) \mu_0^{-1/2} + \mu_0^{1/2} \bxi \underline{\nu} \bxi^t  \mu_0^{1/2}.
$$
Taking \eqref{eff_coeff_est} into account, we see that the symbol $a(\bxi)$ satisfies 
\begin{equation}\label{symbol_eff_est}
c_1 |\bxi|^2 \1 \le  a(\bxi) \le c_2 |\bxi|^2 \1, \quad \bxi \in \R^3.
\end{equation}
Here the constants  $c_1$ and $c_2$ are the same as in  \eqref{form_l_eps_est}.

\subsection{The properties of the effective matrix $\eta^0$. 
The properties of the functions $\Phi_j$\label{Sec1.2a}}

The effective matrix $\eta^0$ satisfies the estimates 
\begin{equation}\label{FR}
\underline{\eta} \le \eta^0 \le \overline{\eta},
\end{equation}
known as the Voigt--Reuss bracketing. See, e.~g., \cite[Chapter~3, Theorem~1.5]{BSu1}.
We distinguish the cases where one of the inequalities in \eqref{FR} becomes an identity; see, e.~g., \cite[Chapter~3, Propositions~1.6 and~1.7]{BSu1}.

\begin{proposition}\label{prop_cases}

\noindent\emph{1)} The identity $\eta^0 = \overline{\eta}$ is equivalent to the relations
$\div {\boldsymbol{\eta}}_j(\x)=0$, $j=1,2,3,$ for the columns 
${\boldsymbol{\eta}}_j(\x)$ of the matrix  $\eta(\x)$.

\noindent\emph{2)} The identity $\eta^0 = \underline{\eta}$ is equivalent to the following representations 
for the columns ${\boldsymbol{\kappa}}_j(\x)$ of the matrix $\eta(\x)^{-1}$\emph{:}
${\boldsymbol{\kappa}}_j(\x)= {\mathbf c}_j^0 + \nabla f_j(\x)$, $j=1,2,3$, with some ${\mathbf c}_j^0 \in \C^3$ and $f_j \in \wt{H}^1(\Omega)$.
\end{proposition}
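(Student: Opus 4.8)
The plan is to treat the two cases symmetrically, using the variational characterization of $\eta^0$ and the fact that the auxiliary cell problem $\div\,\eta(\x)(\nabla\Phi_j+\c_j)=0$ already appeared in the construction above. First recall that $\eta^0\c$ can be obtained by minimizing $|\Omega|^{-1}\int_\Omega\langle\eta(\x)(\c+\nabla\varphi),\c+\nabla\varphi\rangle\,d\x$ over $\varphi\in\wt H^1(\Omega)$: the minimizer is $\Phi$ from \eqref{v_eq6} and the minimal value is $\langle\eta^0\c,\c\rangle$. Dually, $\langle(\eta^0)^{-1}\d,\d\rangle$ equals the minimum of $|\Omega|^{-1}\int_\Omega\langle\eta(\x)^{-1}(\d+\g),\d+\g\rangle\,d\x$ over all $\Gamma$-periodic $\g\in L_2(\Omega;\C^3)$ with $\rot\g=0$ and $\int_\Omega\g\,d\x=0$. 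Using the constant test function $\varphi\equiv0$ (resp. $\g\equiv0$) in these two variational problems immediately yields $\langle\eta^0\c,\c\rangle\le\langle\overline\eta\,\c,\c\rangle$ and $\langle(\eta^0)^{-1}\d,\d\rangle\le\langle\overline{\eta^{-1}}\,\d,\d\rangle=\langle(\underline\eta)^{-1}\d,\d\rangle$, which is \eqref{FR}; this part is recalled only for context since it is quoted from \cite{BSu1}.

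For statement 1): the equality $\langle\eta^0\c,\c\rangle=\langle\overline\eta\,\c,\c\rangle$ forces $\varphi\equiv0$ to be the minimizer, hence $\Phi_{\c}\equiv\mathrm{const}$, i.e. $\nabla\Phi_j=0$ for each $j$. Then \eqref{v_eq6} reads $\div(\eta(\x)\c_j)=0$ for all $j$; since $\{\c_j\}=\{(\eta^0)^{-1}\wt\e_j\}$ spans $\C^3$, this is equivalent to $\div\,\eta(\x)\d=0$ for every constant $\d$, i.e. to $\div\bm\eta_j(\x)=0$ for the columns of $\eta$. Conversely, if $\div\bm\eta_j=0$ for all $j$, then for any $\c$ the constant function solves \eqref{v_eq6}, so $\nabla\Phi_{\c}=0$ and $\eta^0\c=\overline{\eta}\,\c$. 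For statement 2): the equality $\langle(\eta^0)^{-1}\d,\d\rangle=\langle(\underline\eta)^{-1}\d,\d\rangle$ forces the curl-free test field $\g\equiv0$ to be optimal in the dual problem. Tracking the Euler–Lagrange equation of that dual problem — equivalently, revisiting \eqref{v_eq4}: there one had $\eta(\x)(\nabla\Phi_j+\c_j)=-i\,(\ldots)$, and optimality $\g=0$ means $\eta(\x)(\nabla\Phi_j+\c_j)$ is itself constant, say $=\eta^0\c_j=\wt\e_j$. Writing $\bm\kappa_j=\eta(\x)^{-1}\wt\e_j$ we get $\bm\kappa_j=\c_j+\nabla\Phi_j$, which is exactly the asserted representation with $\c_j^0=\c_j$ and $f_j=\Phi_j\in\wt H^1(\Omega)$. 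Conversely, if $\bm\kappa_j(\x)=\c_j^0+\nabla f_j(\x)$, then the field $\eta(\x)^{-1}\d$ is a constant plus a gradient for every $\d$, so plugging $\g=\nabla(\text{suitable combination of }f_j)$ into the dual functional reduces it to the constant part, giving $(\eta^0)^{-1}\d=\overline{\eta^{-1}}\,\d=(\underline\eta)^{-1}\d$.

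The main obstacle — modest in this case — is bookkeeping the duality precisely: one must be sure that the minimum defining $(\eta^0)^{-1}$ is genuinely over curl-free mean-zero periodic fields and that its Euler–Lagrange condition, combined with the relation \eqref{relation} $\wt\CC=\eta^0\c$ already derived, pins down $\eta(\x)(\nabla\Phi_j+\c_j)$ to the constant $\wt\e_j$. An alternative, avoiding duality, is to argue directly from \eqref{v_eq4} and \eqref{v_eq6}: equality in the lower Voigt–Reuss bound means $\int_\Omega\langle\eta(\x)^{-1}\mathbf h_j,\mathbf h_j\rangle\,d\x$ (with $\mathbf h_j:=\eta(\x)(\nabla\Phi_j+\c_j)$, which has mean $\wt\e_j$ by \eqref{eta0}–\eqref{relation}) attains its minimum $\langle(\underline\eta)^{-1}\wt\e_j,\wt\e_j\rangle$, forcing $\mathbf h_j$ constant by the strict convexity of $t\mapsto\langle\eta(\x)^{-1}t,t\rangle$ and Jensen's inequality; this hands back $\bm\kappa_j=\eta^{-1}\wt\e_j=\nabla\Phi_j+\c_j$ directly. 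Either route closes the equivalence; I would present the Jensen argument as it is the cleanest and mirrors the standard proof in \cite[Chapter~3]{BSu1}.
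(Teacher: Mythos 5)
The paper itself does not prove this proposition; it only cites \cite[Chapter~3, Propositions~1.6 and~1.7]{BSu1}, so your argument has to stand on its own. Part 1) does: the primal variational characterization of $\langle \eta^0\c,\c\rangle$, strict convexity in $\nabla\varphi$, and the spanning of $\C^3$ by the vectors $\c_j=(\eta^0)^{-1}\wt{\e}_j$ give both implications correctly.

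Part 2), however, rests on a misstated dual principle, and the ``Jensen'' shortcut you say you would actually present does not work as written. The correct dual (Thomson) characterization is
$\langle (\eta^0)^{-1}\d,\d\rangle=\min\big\{|\Omega|^{-1}\int_\Omega\langle \eta(\x)^{-1}\mathbf{h},\mathbf{h}\rangle\,d\x\big\}$ over $\Gamma$-periodic \emph{divergence-free} fields $\mathbf{h}$ with mean $\d$ --- not over $\mathbf h=\d+\g$ with $\rot\g=0$ and $\overline{\g}=0$. Minimizing over curl-free perturbations produces the effective matrix of the operator with coefficient $\eta^{-1}$, which is not $(\eta^0)^{-1}$ in general; and the Euler--Lagrange condition of that (wrong) problem at $\g=0$ says the columns of $\eta^{-1}$ are divergence-free, not potential. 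Likewise, the inequality implicitly invoked in your ``Jensen'' variant, namely $|\Omega|^{-1}\int_\Omega\langle\eta(\x)^{-1}\mathbf h,\mathbf h\rangle\,d\x\ge |\Omega|^{-1}\int_\Omega\langle\eta(\x)^{-1}\overline{\mathbf h},\overline{\mathbf h}\rangle\,d\x$ for fields with prescribed mean, is false for an $\x$-dependent quadratic form (concentrate $\mathbf h$ where $\eta^{-1}$ is small); Jensen applies only to integrands not depending on $\x$, and without the solenoidal constraint the constant field is not the minimizer. The step ``equality in the lower Voigt--Reuss bound forces $\mathbf h_j:=\eta(\x)(\nabla\Phi_j+\c_j)$ to be constant'' therefore needs a different justification. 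A clean repair that uses only what you already have: since $\eta(\x)^{-1}\mathbf h_j=\nabla\Phi_j+\c_j$ has mean $\c_j=(\eta^0)^{-1}\wt{\e}_j$ and $|\Omega|^{-1}\int_\Omega\langle\eta^{-1}\mathbf h_j,\mathbf h_j\rangle\,d\x=\langle(\eta^0)^{-1}\wt{\e}_j,\wt{\e}_j\rangle$, expanding the square gives
\begin{equation*}
|\Omega|^{-1}\int_\Omega\big\langle\eta(\x)^{-1}(\mathbf h_j-\wt{\e}_j),\,\mathbf h_j-\wt{\e}_j\big\rangle\,d\x
=\big\langle\big((\underline{\eta})^{-1}-(\eta^0)^{-1}\big)\wt{\e}_j,\wt{\e}_j\big\rangle,
\end{equation*}
which vanishes precisely when $\eta^0=\underline{\eta}$ (for all $j$), forcing $\mathbf h_j=\wt{\e}_j$ a.e.\ and hence ${\boldsymbol{\kappa}}_j=\eta^{-1}\wt{\e}_j=\c_j+\nabla\Phi_j$. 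Your converse direction for 2) is fine once stated without the dual functional: from ${\boldsymbol{\kappa}}_j=\c_j^0+\nabla f_j$ one gets $\div\,\eta(\nabla f_j+\c_j^0)=\div\wt{\e}_j=0$, so $\eta^0\c_j^0=\wt{\e}_j$ while $\c_j^0=\overline{\eta^{-1}}\,\wt{\e}_j$, whence $\eta^0=\underline{\eta}$.
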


\begin{remark}\label{rem_eta0}
1) If $\eta^0 = \overline{\eta}$, then $\Phi_j(\x)=0$, $j=1,2,3,$ and $\Sigma(\x)=0$. 
According to \eqref{v_eq14}, in this case we have $\wt{g}(\x) = g^0$.

\noindent 2) If $\eta^0 = \underline{\eta}$, then $\eta(\x)(\nabla \Phi_j(\x) + {\mathbf c}_j)= \wt{\e}_j$, $j=1,2,3;$
see \cite[Remark 3.5]{BSu2}. 
In this case, we have $\v_j(\x) = 0$, $j=1,2,3,$ i.~e., $\Psi(\x)=0$. If, in addition, $\nu(\x)= \operatorname{Const}$, then $\v_4(\x)=0$. Hence, in this case we have  $\Lambda(\x)=0$.
\end{remark}

In what follows, we will need some properties of the functions $\Phi_j$, $j=1,2,3$. 

\begin{remark}\label{LaUr}
The columns of the matrix $\Sigma(\x)$  are vector-functions $\nabla \Phi_j(\x),$ $j=1,2,3$, 
where $\Phi_j$ is the periodic solution of the problem 
\begin{equation}\label{1.34a}
\div \eta(\x) (\nabla \Phi_j(\x) + \c_j) =0, \quad \intop_\Omega \Phi_j(\x) \,d\x=0,
\end{equation}
with $\c_j = (\eta^0)^{-1} \wt{\e}_j$.
According to \cite[Chapter~3, Theorem~13.1]{LaUr}, the solution of this problem is bounded: $\Phi_j \in L_\infty$, and the norm 
$\|\Phi_j\|_{L_\infty}$ is controlled in terms of $\|\eta\|_{L_\infty}$, $\|\eta^{-1}\|_{L_\infty}$, and the parameters of the lattice $\Gamma$. 
\end{remark}

The following statement was checked in  \cite[Corollary 2.4]{PSu}.

\begin{proposition}\label{prop_PSu}
 For any  $u \in H^1(\R^3)$, we have  
\begin{equation*}
\intop_{\R^3} |(\nabla \Phi_j)^\eps|^2 |u|^2 \, d\x \le 
\beta_1 \|u\|^2_{L_2(\R^3)} + \beta_2 \eps^2 \|\Phi_j\|_{L_\infty}^2  \| \D u\|^2_{L_2(\R^3)},
\end{equation*}
where the constants $\beta_1$ and $\beta_2$ depend only on $\|\eta\|_{L_\infty}$ and $\|\eta^{-1}\|_{L_\infty}$. 
\end{proposition}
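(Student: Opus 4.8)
The statement to prove (Proposition \ref{prop_PSu}) is a weighted $L_2$-bound for the oscillating coefficient $(\nabla\Phi_j)^\eps$ tested against an arbitrary $H^1(\R^3)$ function. The natural strategy is to exploit that $\nabla\Phi_j$ is $\Gamma$-periodic and, crucially, belongs to $L_2(\Omega)$ (this is automatic since $\Phi_j\in\wt H^1(\Omega)$, a solution of the cell problem \eqref{1.34a}); moreover, by Remark \ref{LaUr}, $\Phi_j$ itself is bounded. The key mechanism is a \emph{Poincar\'e-type inequality on the scaled cell}: if $Q$ is a cube of side $\sim\eps$ (a translate of $\eps\Omega$), then for any $w\in H^1(Q)$ and any $\Gamma$-periodic $\psi\in L_2(\Omega)$ one has $\int_Q |\psi^\eps|^2|w|^2\,d\x \le c\,|\Omega|^{-1}\|\psi\|_{L_2(\Omega)}^2\big(\|w\|_{L_2(Q)}^2 + \eps^2\|\nabla w\|_{L_2(Q)}^2\big)$ — but the constant here blows up unless $\psi^\eps$ is handled more carefully. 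The sharper route, which is what gives the $\|\Phi_j\|_{L_\infty}$ factor rather than $\|\nabla\Phi_j\|_{L_2}$, is to \emph{not} bound $\nabla\Phi_j$ directly but to integrate by parts: write $|(\nabla\Phi_j)^\eps|^2|u|^2$ in a way that moves a derivative off $\Phi_j^\eps$.

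Concretely, I would argue as follows. Set $\psi=\Phi_j$ and recall $\psi$ solves $\div\,\eta(\nabla\psi+\c_j)=0$, i.e. $\div\,(\eta\nabla\psi) = -\div\,(\eta\c_j)$. Testing the cell equation against $\psi\,|u|^2$-type expressions, or equivalently using the identity $(\nabla\psi^\eps)\cdot(\eta^\eps)(\nabla\psi^\eps+\c_j) $ integrated against $|u|^2$, one can trade $|\nabla\psi^\eps|^2$ against $\psi^\eps(\nabla\psi^\eps)\cdot\nabla(|u|^2)$ plus lower-order terms, thereby replacing one gradient of $\psi$ by $\psi$ itself (hence the $\|\psi\|_{L_\infty}$), at the cost of an $\eps$ and a $\D u$. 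Since $\eta^\eps$ is uniformly elliptic and bounded with constants depending only on $\|\eta\|_{L_\infty},\|\eta^{-1}\|_{L_\infty}$, and $\c_j=(\eta^0)^{-1}\wt\e_j$ is bounded by $\|\eta^{-1}\|_{L_\infty}$ via \eqref{eff_coeff_est}, every constant produced depends only on these quantities — which matches the claimed dependence of $\beta_1,\beta_2$. One would also invoke the elementary periodic estimate $\int_{\R^3}|\psi^\eps|^2|u|^2 \le \|\psi\|_{L_\infty}^2\|u\|_{L_2}^2$ trivially, but that alone is too crude for the $\nabla\Phi_j$ weight; the integration-by-parts step is essential to convert $\nabla\Phi_j$-weight into $\Phi_j$-weight.

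The cleanest packaging: multiply the cell equation $\div\,\eta^\eps(\nabla\psi^\eps+\c_j)=0$ (which holds after rescaling, with the understanding that $\nabla\psi^\eps = \eps^{-1}(\nabla\psi)^\eps$, so the correct scaled identity is $\div\,\eta^\eps((\nabla\psi)^\eps+\c_j)=0$) by the test function $\psi^\eps|u|^2\in H^1$ (legitimate since $\psi\in L_\infty$ and $u\in H^1$), integrate by parts over $\R^3$:
\begin{equation*}
\intop_{\R^3}\big\langle \eta^\eps\big((\nabla\psi)^\eps+\c_j\big),\ \eps^{-1}(\nabla\psi)^\eps\,|u|^2 + \psi^\eps\,\nabla(|u|^2)\big\rangle\,d\x = 0.
\end{equation*}
Solving for the term containing $\eps^{-1}|(\nabla\psi)^\eps|^2|u|^2$, using ellipticity $\langle\eta^\eps v,v\rangle\ge \|\eta^{-1}\|_{L_\infty}^{-1}|v|^2$ with $v=(\nabla\psi)^\eps+\c_j$, expanding, and absorbing cross terms by Cauchy--Schwarz with a small parameter, one isolates $\int|(\nabla\psi)^\eps|^2|u|^2$ on the left; the right side contains $\int|u|^2$ (from the $|\c_j|^2$ and mixed constant terms, giving $\beta_1\|u\|_{L_2}^2$) and terms with $\nabla(|u|^2)=2\,\mathrm{Re}(\bar u\,\nabla u)$, which after Cauchy--Schwarz produce $\eps\|\psi\|_{L_\infty}\|(\nabla\psi)^\eps\,u\|_{L_2}\|\D u\|_{L_2}$ and $\eps\|\psi\|_{L_\infty}\|u\|_{L_2}\|\D u\|_{L_2}$; the first of these is absorbed into the left-hand side (Young's inequality), leaving exactly $\beta_1\|u\|_{L_2}^2+\beta_2\eps^2\|\psi\|_{L_\infty}^2\|\D u\|_{L_2}^2$.

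\textbf{Main obstacle.} The delicate point is the absorption/bookkeeping: one must verify that the mixed term $\eps\,\|\psi\|_{L_\infty}\,\|(\nabla\psi)^\eps u\|_{L_2}\,\|\D u\|_{L_2}$ can be absorbed into $c\int|(\nabla\psi)^\eps|^2|u|^2$ — this works because the absorbed piece carries an extra $\eps^2\|\D u\|^2$ (into $\beta_2$) and a harmless constant, but one has to be careful that the coefficient in front of $\int|(\nabla\psi)^\eps|^2|u|^2$ after absorption is still strictly positive, which requires choosing the Young parameter in terms of the ellipticity constant $\|\eta^{-1}\|_{L_\infty}^{-1}$ only. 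A secondary technical point is justifying that $\psi^\eps|u|^2$ is an admissible test function and that the integration by parts has no boundary contribution at infinity — standard density/truncation arguments handle this since $u\in H^1(\R^3)$ and $\psi\in L_\infty$, $\nabla\psi\in L_{2,\mathrm{loc}}$ periodic. I expect the proof to be short once the right test function is chosen; indeed this is precisely the content of \cite[Corollary 2.4]{PSu}, so the above is the expected line of argument.
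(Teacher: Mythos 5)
Your argument is correct and is essentially the argument behind this statement: the paper itself gives no proof but cites \cite[Corollary~2.4]{PSu}, and the proof there is exactly the energy identity you describe — test the rescaled cell equation $\div\,\eta^\eps((\nabla\Phi_j)^\eps+\c_j)=0$ with $\Phi_j^\eps|u|^2$, use ellipticity on $v=(\nabla\Phi_j)^\eps+\c_j$, and absorb the cross terms by Young's inequality with a parameter depending only on $\|\eta^{-1}\|_{L_\infty}$, with $|\c_j|\le\|\eta^{-1}\|_{L_\infty}$ supplying the $\beta_1\|u\|_{L_2}^2$ term. Your bookkeeping of the absorption and the density reduction to $u\in C_0^\infty$ are both sound.
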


\subsection{Approximation of the resolvent of the operator $\mathcal{L}_\eps$\label{Sec1.3}}
Applying Theorem 2.1 from \cite[Chapter~4]{BSu1} to the operator~\eqref{oper_L_eps}, we obtain 
the following result.

\begin{theorem}\label{th1}
Let $\L_\eps$ be the operator~\eqref{oper_L_eps}. Suppose that the effective operator $\L^0$ is defined by \eqref{oper_effective}. For $\eps>0$ we have  
$$
\big\| ({\L}_\eps+I)^{-1} - (\L^0 +I)^{-1}\big\|_{L_2(\R^3) \to L_2(\R^3)} \le C_1 \eps.
$$
The constant $C_1$ depends only on $|\mu_0|$, $|\mu_0^{-1}|$, $\|\eta\|_{L_\infty}$, $\|\eta^{-1}\|_{L_\infty}$,
$\|\nu \|_{L_\infty}$, $\|\nu^{-1}\|_{L_\infty}$, and the parameters of the lattice~$\Gamma$. 
\end{theorem}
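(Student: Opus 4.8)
The plan is to invoke the general homogenization result of Birman--Suslina for operators of the form \eqref{0.1}, applied to the specific operator $\L_\eps = b(\D)^* g^\eps(\x) b(\D)$ constructed in \S\ref{Sec1.2}. The statement is precisely the specialization of \cite[Chapter~4, Theorem~2.1]{BSu1}: that theorem asserts that for any operator $\A_\eps = b(\D)^* g^\eps(\x) b(\D)$ acting in $L_2(\R^d;\C^n)$, where $g$ is bounded, positive definite and $\Gamma$-periodic and $b(\D)$ has symbol of maximal rank, one has $\|(\A_\eps + I)^{-1} - (\A^0 + I)^{-1}\|_{L_2 \to L_2} \le C\eps$, with the constant $C$ depending only on the structural data ($\alpha_0$, $\alpha_1$, $\|g\|_{L_\infty}$, $\|g^{-1}\|_{L_\infty}$, and the lattice). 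So the proof is essentially a verification that our $L_\eps$ falls within the scope of that abstract theorem, together with an identification of the effective operator.

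First I would record that $\L_\eps$ has the factorized form \eqref{0.1} with $b(\D)$ and $g(\x)$ as in \eqref{def}; this is already done in the excerpt. Second, I would check the hypotheses of the abstract theorem: that $g$ is bounded and positive definite (this follows from \eqref{cond_eta_nu}, with explicit norms recorded after \eqref{def}), and that the symbol $b(\bxi)$ has maximal rank, i.e.\ $\rank b(\bxi) = 3$ for $\bxi \ne 0$ — equivalently the two-sided estimate \eqref{bb_eps} with the explicit constants $\alpha_0 = \min\{|\mu_0|^{-1}, |\mu_0^{-1}|^{-1}\}$, $\alpha_1 = |\mu_0| + |\mu_0^{-1}|$. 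Once these are in place, the abstract theorem applies verbatim and yields the stated estimate, with a constant $C_1$ depending only on $\alpha_0$, $\alpha_1$, $\|g\|_{L_\infty} = \max\{\|\eta^{-1}\|_{L_\infty}, \|\nu\|_{L_\infty}\}$, $\|g^{-1}\|_{L_\infty} = \max\{\|\eta\|_{L_\infty}, \|\nu^{-1}\|_{L_\infty}\}$ and the lattice parameters — hence, after expanding $\alpha_0$ and $\alpha_1$, only on $|\mu_0|$, $|\mu_0^{-1}|$, the $L_\infty$-norms of $\eta^{\pm1}$, $\nu^{\pm1}$, and $\Gamma$, as claimed.

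The remaining point is to confirm that the effective operator produced by the abstract machinery, namely $\L^0 = b(\D)^* g^0 b(\D)$ with $g^0$ defined via the cell problem \eqref{Lambda_eq}--\eqref{eff_matrix}, coincides with the explicit operator \eqref{oper_effective}. This is exactly the content of the computation carried out in \S\ref{Sec1.2a}: solving the cell equation \eqref{v_eq} by the Weyl decomposition reduces it to the classical scalar cell problems \eqref{v_eq6} and \eqref{v_eq13}, yielding the block-diagonal form $\wt g(\x)$ in \eqref{v_eq14} and hence $g^0 = \mathrm{diag}((\eta^0)^{-1}, \underline\nu)$ in \eqref{v_eq15}; substituting into $b(\D)^* g^0 b(\D)$ gives \eqref{oper_effective}. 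So I would simply cite that computation. There is essentially no obstacle here: the only thing requiring any care is making sure the abstract theorem of \cite{BSu1} is quoted with its dependence of constants stated sharply enough to obtain the asserted list, and that the identification $\L^0 = $ \eqref{oper_effective} is in hand before applying it — both already supplied in the preceding subsections.
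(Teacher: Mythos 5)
Your proposal matches the paper's proof exactly: the paper also obtains Theorem \ref{th1} by applying Theorem 2.1 of \cite[Chapter~4]{BSu1} to the operator \eqref{oper_L_eps}, with the hypotheses (factorization \eqref{def}, positivity and boundedness of $g$, the rank condition \eqref{rank_cond}--\eqref{bb_eps}) and the identification of $\L^0$ via \eqref{v_eq14}--\eqref{oper_effective} all supplied in the preceding subsections, just as you describe. No discrepancies.
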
 

Approximation of the resolvent in the norm of operators acting from $L_2(\R^3;\C^3)$ to the Sobolev space $H^1(\R^3;\C^3)$ was obtained in  \cite[Theorem 10.6]{BSu3}; 
that approximation contained a corrector with the smoothing operator of different type than $S_\eps$. In \cite[Theorem~3.3]{PSu}, it was shown that this smoothing operator can be replaced by the Steklov smoothing operator. Let us formulate the result of \cite{PSu} as applied to  
\eqref{oper_L_eps}. We introduce a \textit{corrector} 
\begin{equation}\label{corr0}
\K_\eps = \Lambda^\eps S_\eps b(\D) (\L^0 + I)^{-1}. 
\end{equation}  
Here $S_\eps$ is the Steklov smoothing operator defined by  \eqref{S_eps}, 
and the matrix $\Lambda$ is the periodic solution of  problem  \eqref{Lambda_eq}.
The operator  
$$
b(\D) (\L^0 + I)^{-1}
$$
 is continuous from $L_2(\R^3;\C^3)$ to $H^1(\R^3;\C^4)$. By Proposition~\ref{prop f^eps S_eps}
and  relation $\Lambda \in \wt{H}^1(\Omega)$, the operator  $\Lambda^\eps S_\eps$ is a continuous mapping of $H^1(\R^3;\C^4)$ to $H^1(\R^3;\C^3)$. 
Hence, the corrector \eqref{corr0} is continuous from $L_2(\R^3;\C^3)$ to $H^1(\R^3;\C^3)$.  
 Taking  \eqref{def} and \eqref{v_eq13a} into account, we obtain 
\begin{equation}\label{corr1}
\K_\eps = \left( \mu_0^{-1/2}\Psi^\eps S_\eps \rot \mu_0^{-1/2} + \mu_0^{1/2} (\nabla \rho)^\eps S_\eps \div \mu_0^{1/2}
\right) (\L^0 +I)^{-1}. 
\end{equation}

\begin{theorem}\label{th2} 
Suppose that the assumptions of Theorem \emph{\ref{th1}} are satisfied. 
Suppose that the corrector~$\K_\eps$ is defined by  {\eqref{corr1}}. 
Then for $\eps>0$ we have 
$$
\| ( {\L}_\eps+I)^{-1} - (\L^0 +I)^{-1} - \eps \K_\eps\|_{L_2(\R^3) \to H^1(\R^3)} \le C_2 \eps.
$$
The constant $C_2$ depends only on $|\mu_0|$, $|\mu_0^{-1}|$, $\|\eta\|_{L_\infty}$, $\|\eta^{-1}\|_{L_\infty}$,
$\|\nu \|_{L_\infty}$, $\|\nu^{-1}\|_{L_\infty}$, and the parameters of the lattice $\Gamma$. 
\end{theorem}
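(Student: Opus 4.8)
The plan is to deduce Theorem~\ref{th2} from the general $H^1$-approximation result of \cite[Theorem~3.3]{PSu}, exactly as Theorem~\ref{th1} is deduced from \cite[Chapter~4, Theorem~2.1]{BSu1}. First I would recall that the operator $\L_\eps$ has been put in the factorized form $\L_\eps = b(\D)^* g^\eps(\x) b(\D)$ with $b(\D)$ and $g(\x)$ given by \eqref{def}, and that the structural hypotheses required in \cite{PSu} are met: the symbol condition \eqref{rank_cond} (equivalently the uniform bounds \eqref{bb_eps} with explicit $\alpha_0,\alpha_1$), the boundedness and positive definiteness of $g$, and $\Gamma$-periodicity. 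The quadratic-form bounds \eqref{form_l_eps_est} then guarantee that $\L_\eps$ is a nonnegative selfadjoint operator of the class treated in \cite{PSu}, and the same bounds applied to the symbol of $\L^0$ give \eqref{symbol_eff_est}, so that $(\L^0+I)^{-1}$ maps $L_2(\R^3;\C^3)$ continuously into $H^2(\R^3;\C^3)$ and $b(\D)(\L^0+I)^{-1}$ maps $L_2$ continuously into $H^1(\R^3;\C^4)$.

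Next I would invoke \cite[Theorem~3.3]{PSu} verbatim for the pair $(\L_\eps,\L^0)$: it yields
$$
\big\| (\L_\eps+I)^{-1} - (\L^0+I)^{-1} - \eps\, \Lambda^\eps S_\eps b(\D)(\L^0+I)^{-1}\big\|_{L_2(\R^3)\to H^1(\R^3)} \le C_2 \eps,
$$
with a constant $C_2$ depending only on $\alpha_0$, $\alpha_1$, $\|g\|_{L_\infty}$, $\|g^{-1}\|_{L_\infty}$, and the lattice; by the explicit formulas recorded just after \eqref{bb_eps} and the identities for $\|g\|_{L_\infty}$, $\|g^{-1}\|_{L_\infty}$, these reduce to the quantities $|\mu_0|$, $|\mu_0^{-1}|$, $\|\eta\|_{L_\infty}$, $\|\eta^{-1}\|_{L_\infty}$, $\|\nu\|_{L_\infty}$, $\|\nu^{-1}\|_{L_\infty}$ and the parameters of $\Gamma$, as claimed.

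It then remains only to check that the abstract corrector $\Lambda^\eps S_\eps b(\D)(\L^0+I)^{-1}$ coincides with the operator $\K_\eps$ defined in \eqref{corr1}. This is the substitution already carried out in the text preceding the theorem: one inserts the explicit form \eqref{v_eq13a} of $\Lambda(\x)$ and the block form \eqref{def} of $b(\D)$, writes $b(\D)(\L^0+I)^{-1}$ as the column with entries $-i\rot\mu_0^{-1/2}(\L^0+I)^{-1}$ and $-i\div\mu_0^{1/2}(\L^0+I)^{-1}$, multiplies by $S_\eps$ (which commutes with the constant matrices and with $\D^\alpha$), and then by $\Lambda^\eps = i\begin{pmatrix}\mu_0^{-1/2}\Psi^\eps & \mu_0^{1/2}(\nabla\rho)^\eps\end{pmatrix}$; the two factors of $i$ cancel and one obtains \eqref{corr1}. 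The continuity of $\K_\eps$ from $L_2$ to $H^1$ has already been recorded using Proposition~\ref{prop f^eps S_eps} together with $\Psi,\nabla\rho \in L_2(\Omega)$ (which follows from $\Lambda\in\wt H^1(\Omega)$ and \eqref{Lambda_est}).

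There is essentially no genuine obstacle here: the theorem is a direct transcription of a result from \cite{PSu} to the concrete operator \eqref{oper_L_eps}, and the only points requiring care are bookkeeping — verifying that all constants in \cite{PSu} are controlled by the listed data, and carrying out the algebraic identification of the correctors without sign or factor errors. The mild subtlety worth a sentence of attention is that the smoothing operator in the original $H^1$-estimate of \cite[Theorem~10.6]{BSu3} was of a different type, and it is the refinement in \cite[Theorem~3.3]{PSu} that permits the use of the Steklov operator $S_\eps$ appearing in \eqref{corr0}; this is why the theorem is stated with reference to \cite{PSu} rather than \cite{BSu3}.
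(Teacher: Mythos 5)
Your proposal is correct and follows essentially the same route as the paper: the paper itself offers no separate proof of Theorem~\ref{th2}, but obtains it precisely by citing \cite[Theorem~3.3]{PSu} (the Steklov-smoothing refinement of \cite[Theorem~10.6]{BSu3}) for the factorized operator $\L_\eps=b(\D)^*g^\eps b(\D)$, checking the rank/positivity hypotheses via \eqref{rank_cond}--\eqref{bb_eps}, and identifying the abstract corrector $\Lambda^\eps S_\eps b(\D)(\L^0+I)^{-1}$ with \eqref{corr1} through \eqref{def} and \eqref{v_eq13a}. Your bookkeeping of the constants and the remark about why \cite{PSu} rather than \cite{BSu3} is cited both match the paper's presentation.
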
 

It is easy to deduce approximation for the  ``flux'' 
$
g^\eps b(\D) ( {\L}_\eps+I)^{-1}
$
from Theorem~\ref{th2}; see \cite[Theorem 1.8]{Su_SIAM}. We have
\begin{equation}
\label{flux1}
\big\| g^\eps b(\D) ( {\L}_\eps+I)^{-1} - \wt{g}^\eps S_\eps b(\D)( {\L}^0 +I)^{-1} \big\|_{L_2(\R^3) \to L_2(\R^3)} 
\le \check{C}_3 \eps,
\end{equation}
where $\check{C}_3$ depends only on $|\mu_0|$, $|\mu_0^{-1}|$, $\|\eta\|_{L_\infty}$, $\|\eta^{-1}\|_{L_\infty}$,
$\|\nu \|_{L_\infty}$, $\|\nu^{-1}\|_{L_\infty}$, and the parameters of the lattice $\Gamma$. 
By \eqref{def} and \eqref{v_eq14},
\begin{align}
\label{flux2}
g^\eps b(\D) ( {\L}_\eps+I)^{-1} &= -i \begin{pmatrix} 
(\eta^\eps)^{-1} \rot \mu_0^{-1/2} (\L_\eps +I )^{-1}
\cr 
 \nu^\eps \div \mu_0^{1/2} (\L_\eps +I )^{-1}
\end{pmatrix},
\\
\label{flux3}
\wt{g}^\eps S_\eps b(\D) ( {\L}^0 +I)^{-1} &= -i \begin{pmatrix} 
((\eta^0)^{-1} + \Sigma^\eps) S_\eps \rot \mu_0^{-1/2} (\L^0 +I )^{-1}
\cr 
 \underline{\nu} S_\eps \div \mu_0^{1/2} (\L^0 +I )^{-1}
\end{pmatrix}.
\end{align}

Let us show that in estimate \eqref{flux1}  
the operator $S_\eps$ can be replaced by the identity; only the constant in estimate will change. 

\begin{lemma}\label{lem1.7}
For $\eps>0$ we have
\begin{equation}
\label{flux4}
\big\| \wt{g}^\eps (S_\eps -I) b(\D)( {\L}^0 +I)^{-1} \big\|_{L_2(\R^3) \to L_2(\R^3)} \le C' \eps. 
\end{equation}
The constant $C'$ depends only on $|\mu_0|$, $|\mu_0^{-1}|$, $\|\eta\|_{L_\infty}$, $\|\eta^{-1}\|_{L_\infty}$,
$\|\nu \|_{L_\infty}$, $\|\nu^{-1}\|_{L_\infty}$, and the parameters of the lattice $\Gamma$.
\end{lemma}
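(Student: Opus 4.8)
The plan is to reduce the estimate to Proposition \ref{prop_Seps - I} combined with the $H^2$-regularity of the resolvent $(\L^0+I)^{-1}$, being careful about the fact that $\wt g$ is only bounded (it contains the factor $\Sigma^\eps$, i.e.\ the functions $(\nabla\Phi_j)^\eps$, which are merely $L_2$-periodic). First I would write, using \eqref{flux3}, the operator under the norm as the sum of two blocks: the curl-block
$$
\big((\eta^0)^{-1} + \Sigma^\eps\big)(S_\eps - I)\rot\mu_0^{-1/2}(\L^0+I)^{-1}
$$
and the divergence-block $\underline\nu\,(S_\eps-I)\div\mu_0^{1/2}(\L^0+I)^{-1}$. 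The divergence-block is the easy one: $\underline\nu$ is a constant matrix bounded by $\|\nu\|_{L_\infty}$ (see \eqref{eff_coeff_est}), and $\div\mu_0^{1/2}(\L^0+I)^{-1}$ is continuous from $L_2(\R^3;\C^3)$ to $H^1(\R^3)$ because $(\L^0+I)^{-1}$ maps $L_2$ to $H^2$ (the symbol of $\L^0$ is elliptic, by \eqref{symbol_eff_est}). Hence Proposition \ref{prop_Seps - I} applied to $\div\mu_0^{1/2}(\L^0+I)^{-1}\f$ gives a bound $\le C\eps\|\f\|_{L_2(\R^3)}$.

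The curl-block splits further: the piece with the constant matrix $(\eta^0)^{-1}$ is handled exactly as the divergence-block, using $|(\eta^0)^{-1}|\le\|\eta^{-1}\|_{L_\infty}$, Proposition \ref{prop_Seps - I}, and the $L_2\to H^2$ bound for $(\L^0+I)^{-1}$. The genuinely delicate piece is
$$
\Sigma^\eps (S_\eps - I)\rot\mu_0^{-1/2}(\L^0+I)^{-1}\f .
$$
Here I would \emph{not} estimate $\|\Sigma^\eps\|_{L_\infty}$ (it need not be finite); instead, set $\mathbf v = \rot\mu_0^{-1/2}(\L^0+I)^{-1}\f\in H^1(\R^3;\C^3)$, note $\|\mathbf v\|_{H^1}\le C\|\f\|_{L_2}$, and split $(S_\eps - I)\mathbf v = (S_\eps \mathbf v - S_\eps S_\eps \mathbf v) + (S_\eps S_\eps \mathbf v - S_\eps \mathbf v) + \dots$ — more cleanly, write $(S_\eps-I)\mathbf v = -(I-S_\eps)\mathbf v$ and use that $\mathbf w := (I-S_\eps)\mathbf v$ lies in $H^1$ with $\|\mathbf w\|_{L_2}\le r_1\eps\|\D\mathbf v\|_{L_2}$ and $\|\D\mathbf w\|_{L_2}\le 2\|\D\mathbf v\|_{L_2}$. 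Then apply Proposition \ref{prop_PSu} to each component of $\mathbf w$:
$$
\int_{\R^3}|(\nabla\Phi_j)^\eps|^2|w_k|^2\,d\x \le \beta_1\|\mathbf w\|_{L_2}^2 + \beta_2\eps^2\|\Phi_j\|_{L_\infty}^2\|\D\mathbf w\|_{L_2}^2 \le C\eps^2\|\D\mathbf v\|_{L_2}^2,
$$
where the $\beta_1$-term is $O(\eps^2)$ precisely because of the gain $\|\mathbf w\|_{L_2}\le C\eps\|\D\mathbf v\|_{L_2}$, and $\|\Phi_j\|_{L_\infty}<\infty$ by Remark \ref{LaUr}. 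Summing over $j,k$ and using $\|\D\mathbf v\|_{L_2}\le C\|\f\|_{L_2}$ gives $\|\Sigma^\eps(S_\eps-I)\mathbf v\|_{L_2}\le C\eps\|\f\|_{L_2}$.

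The main obstacle is exactly this last step: one cannot treat $\Sigma^\eps$ as a bounded multiplier, so the argument must exploit the extra $\eps$-smallness of $(I-S_\eps)\mathbf v$ in $L_2$ together with the $L_\infty$-bound on $\Phi_j$ (not on $\nabla\Phi_j$) via Proposition \ref{prop_PSu}. Everything else is a routine application of Proposition \ref{prop_Seps - I} and the elliptic regularity estimate $\|(\L^0+I)^{-1}\|_{L_2\to H^2}\le C$, which follows from \eqref{symbol_eff_est} by Fourier analysis. Tracking the dependence of constants through \eqref{eff_coeff_est}, \eqref{symbol_eff_est}, Remark \ref{LaUr}, and Proposition \ref{prop_PSu} shows that $C'$ depends only on the listed quantities, completing the proof.
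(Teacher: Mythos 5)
Your proof is correct and follows essentially the same route as the paper: split off the constant blocks (handled by Proposition \ref{prop_Seps - I} and the $H^2$-bound for $(\L^0+I)^{-1}$) and treat the $\Sigma^\eps$-block by applying Proposition \ref{prop_PSu} to $(I-S_\eps)\mathbf v$, extracting the factor $\eps$ from the $\beta_1$-term via the Steklov smoothing estimate and from the $\beta_2$-term via the explicit $\eps$ and the $L_\infty$-bound on $\Phi_j$. The only cosmetic blemish is the abandoned telescoping attempt before you settle on $\mathbf w=(I-S_\eps)\mathbf v$; the final argument matches the paper's estimates \eqref{flux5a}--\eqref{flux5c}.
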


\begin{proof}
By \eqref{flux3}, the left-hand side of \eqref{flux4} does not exceed
\begin{equation}
\label{flux5}
\big\| g^0 (S_\eps -I) b(\D)  (\L^0 +I )^{-1} \big\|_{L_2(\R^3) \to L_2(\R^3)}
+ \big\| \Sigma^\eps (S_\eps -I) \rot \mu_0^{-1/2} (\L^0 +I )^{-1} \big\|_{L_2(\R^3) \to L_2(\R^3)}. 
\end{equation}
Using Proposition \ref{prop_Seps - I}, 
we estimate the first term in \eqref{flux5}: 
\begin{equation}
\label{flux5a}
\big\| g^0 (S_\eps -I) b(\D)  (\L^0 +I )^{-1} \big\|_{L_2(\R^3) \to L_2(\R^3)}
\le \eps \|g\|_{L_\infty} r_1 \big\| \D b(\D)  (\L^0 +I )^{-1} \big\|_{L_2(\R^3) \to L_2(\R^3)}.
\end{equation}
Recalling that the columns of the matrix $\Sigma^\eps$ are  $(\nabla \Phi_j)^\eps$, $j=1,2,3,$ we estimate 
the second term in \eqref{flux5}.
Applying Proposition \ref{prop_PSu}, and next Proposition  \ref{prop_Seps - I} and inequality  \eqref{S_eps <= 1},
we have:
\begin{equation}
\label{flux5b}
\begin{aligned}
 &\big\| (\nabla \Phi_j)^\eps (S_\eps -I) \rot \mu_0^{-1/2} (\L^0 +I )^{-1} \big\|_{L_2(\R^3) \to L_2(\R^3)} 
\\
&\le
\sqrt{\beta_1} \big\|  (S_\eps -I) \rot \mu_0^{-1/2} (\L^0 +I )^{-1} \big\|_{L_2(\R^3) \to L_2(\R^3)} 
\\
&\qquad+
\sqrt{\beta_2} \eps \|\Phi_j\|_{L_\infty} \big\|  (S_\eps -I) \D \rot \mu_0^{-1/2} (\L^0 +I )^{-1}\big \|_{L_2(\R^3) \to L_2(\R^3)}
\\
& \le \eps (\sqrt{\beta_1} r_1 + 2 \sqrt{\beta_2} \|\Phi_j\|_{L_\infty}) \big\| \D b(\D)  (\L^0 +I )^{-1} \big\|_{L_2(\R^3) \to L_2(\R^3)}.
\end{aligned}
\end{equation}
From \eqref{bb_eps} and \eqref{symbol_eff_est} it follows that
\begin{equation}
\label{flux5c}
\| \D b(\D)  (\L^0 +I )^{-1} \|_{L_2(\R^3) \to L_2(\R^3)} \le \sup_{\bxi \in \R^3} |\bxi b(\bxi) (a(\bxi) + \1)^{-1}| \le \alpha_1^{1/2} c_1^{-1}.
\end{equation}
As a result, inequalities  \eqref{flux5a}--\eqref{flux5c} together with Remark~\ref{LaUr} yield the required estimate \eqref{flux4}.
\end{proof}

Now, relations \eqref{flux1}--\eqref{flux4} imply the following result.

\begin{theorem}\label{th3} 
For $\eps>0$ we have  
$$
\big\| g^\eps b(\D) ( {\L}_\eps+I)^{-1} - \wt{g}^\eps b(\D) ( {\L}^0 +I)^{-1} \big\|_{L_2(\R^3)\to L_2(\R^3)}
\le C_3 \eps.
$$
In other words,
\begin{equation*}
\begin{split}
\big\| (\eta^\eps)^{-1} \rot \mu_0^{-1/2} (\L_\eps +I )^{-1}
- \big((\eta^0)^{-1} + \Sigma^\eps\big) \rot \mu_0^{-1/2} (\L^0 +I )^{-1}
 \big\|_{L_2(\R^3) \to L_2(\R^3)} 
\le  C_3 \eps,
\\
\big\| \nu^\eps \div \mu_0^{1/2} (\L_\eps +I )^{-1} - \underline{\nu} \div \mu_0^{1/2} (\L^0 +I )^{-1} 
\big\|_{L_2(\R^3) \to L_2(\R^3)} \le C_3 \eps. 
\end{split}
\end{equation*} 
The constant $C_3$ depends only on $|\mu_0|$, $|\mu_0^{-1}|$, $\|\eta\|_{L_\infty}$, $\|\eta^{-1}\|_{L_\infty}$,
$\|\nu \|_{L_\infty}$, $\|\nu^{-1}\|_{L_\infty}$, and the parameters of the lattice $\Gamma$. 
\end{theorem}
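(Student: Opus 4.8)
The plan is to obtain the stated estimate as an immediate consequence of~\eqref{flux1} and Lemma~\ref{lem1.7}, combined via the triangle inequality; no new analytic input is needed. First I would insert and subtract the corrector flux $\wt{g}^\eps S_\eps b(\D)(\L^0+I)^{-1}$, writing
\[
g^\eps b(\D) ( {\L}_\eps+I)^{-1} - \wt{g}^\eps b(\D) ( {\L}^0 +I)^{-1}
= \big( g^\eps b(\D) ( {\L}_\eps+I)^{-1} - \wt{g}^\eps S_\eps b(\D)( {\L}^0 +I)^{-1}\big)
- \wt{g}^\eps (S_\eps - I) b(\D)( {\L}^0 +I)^{-1}.
\]
By~\eqref{flux1} the $L_2(\R^3)\to L_2(\R^3)$-norm of the first term on the right is at most $\check{C}_3\eps$, and by Lemma~\ref{lem1.7} the norm of the second term is at most $C'\eps$. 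Hence the assertion holds with $C_3 = \check{C}_3 + C'$, which depends only on $|\mu_0|$, $|\mu_0^{-1}|$, $\|\eta\|_{L_\infty}$, $\|\eta^{-1}\|_{L_\infty}$, $\|\nu\|_{L_\infty}$, $\|\nu^{-1}\|_{L_\infty}$, and the parameters of the lattice $\Gamma$, since both $\check{C}_3$ and $C'$ do.

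For the ``in other words'' reformulation I would use the block representation~\eqref{flux2} of $g^\eps b(\D)(\L_\eps+I)^{-1}$ together with the identity
\[
\wt{g}^\eps b(\D) ( {\L}^0 +I)^{-1} = -i \begin{pmatrix}
\big((\eta^0)^{-1} + \Sigma^\eps\big)\, \rot \mu_0^{-1/2} (\L^0 +I )^{-1} \cr
 \underline{\nu}\, \div \mu_0^{1/2} (\L^0 +I )^{-1}
\end{pmatrix},
\]
which follows from~\eqref{def} and~\eqref{v_eq14} in the same way as~\eqref{flux3}, only without the factor $S_\eps$. Subtracting these two block operators, and observing that the $L_2$-norm of each block is dominated by the $L_2$-norm of the full four-component operator, the two scalar estimates in the statement are immediate from the operator estimate already established.

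I do not expect any genuine obstacle: every ingredient — the flux approximation~\eqref{flux1} in $\R^3$ borrowed from~\cite{Su_SIAM}, the $L_\infty$-bound for $\Phi_j$ (Remark~\ref{LaUr}), Proposition~\ref{prop_PSu}, and the Steklov-smoothing estimates of Propositions~\ref{prop_Seps - I} and~\ref{prop f^eps S_eps} — is already in place, so the proof is a short assembly step. The one point that deserves attention, and the reason Lemma~\ref{lem1.7} was proved separately, is that passing from~\eqref{flux1} to the estimate asserted here requires replacing $S_\eps$ by $I$ inside the \emph{entire} corrector flux $\wt{g}^\eps S_\eps b(\D)(\L^0+I)^{-1}$, i.e.\ controlling $\wt{g}^\eps(S_\eps-I)b(\D)(\L^0+I)^{-1}$ where the rapidly oscillating factor $\Sigma^\eps$ sits next to $(S_\eps-I)$; once Lemma~\ref{lem1.7} supplies that bound, the triangle inequality finishes the argument.
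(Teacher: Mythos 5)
Your argument is exactly the paper's: Theorem \ref{th3} is stated there as an immediate consequence of relations \eqref{flux1}--\eqref{flux4}, i.e.\ the triangle inequality applied to the flux approximation \eqref{flux1} and the $S_\eps \to I$ replacement supplied by Lemma \ref{lem1.7}, with the block forms \eqref{flux2}--\eqref{flux3} giving the componentwise restatement. Your proof is correct and coincides with the paper's, including the identification $C_3 = \check{C}_3 + C'$.
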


Now we distinguish particular cases. Taking Proposition \ref{prop_cases} and Remark \ref{rem_eta0} into account, we deduce the following result from Theorems \ref{th2} and \ref{th3}. 

\begin{proposition}

\noindent\emph{1)} Let $\eta^0= \overline{\eta}$, i.~e., the columns of the matrix $\eta(\x)$ are divergence free.
Then for $\eps >0$ we have  
$$
\big\| (\eta^\eps)^{-1} \rot \mu_0^{-1/2} (\L_\eps +I )^{-1}
- (\eta^0)^{-1} \rot \mu_0^{-1/2} (\L^0 +I )^{-1}
 \big\|_{L_2(\R^3) \to L_2(\R^3)} \le C_3 \eps.
$$

\noindent\emph{2)} Let $\eta^0= \underline{\eta}$, i.~e., the columns of the matrix $\eta(\x)^{-1}$ are potential.
Suppose, in addition, that $\nu(\x) = \operatorname{Const}$. Then the corrector \eqref{corr1} is equal to zero, and for $\eps >0$ we have 
$$
\big\| (\L_\eps +I)^{-1} -(\L^0 +I)^{-1} \big\|_{L_2(\R^3) \to H^1(\R^3)} \le C_2 \eps.
$$
\end{proposition}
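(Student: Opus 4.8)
The plan is to read off both assertions directly from Theorems~\ref{th2} and~\ref{th3} once the structure of the corrector and of the matrix $\Sigma$ is simplified in each special case, using Proposition~\ref{prop_cases} and Remark~\ref{rem_eta0}. No new estimates are needed; the work is purely in identifying which correction terms vanish.

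For part~1, I would first invoke Proposition~\ref{prop_cases}(1): the hypothesis $\eta^0=\overline\eta$ is equivalent to $\div\boldsymbol\eta_j=0$ for the columns of $\eta$. Then Remark~\ref{rem_eta0}(1) gives $\Phi_j=0$ for $j=1,2,3$, hence $\Sigma(\x)=0$, and therefore $\Sigma^\eps=0$ as a multiplication operator. Substituting $\Sigma^\eps=0$ into the first displayed estimate of Theorem~\ref{th3} yields exactly
$$
\big\| (\eta^\eps)^{-1} \rot \mu_0^{-1/2} (\L_\eps +I )^{-1}
- (\eta^0)^{-1} \rot \mu_0^{-1/2} (\L^0 +I )^{-1}
 \big\|_{L_2(\R^3) \to L_2(\R^3)} \le C_3 \eps,
$$
which is the claim.

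For part~2, I would use Proposition~\ref{prop_cases}(2) to restate $\eta^0=\underline\eta$ as potentiality of the columns of $\eta^{-1}$, and then Remark~\ref{rem_eta0}(2): in this case $\v_j=0$ for $j=1,2,3$, so $\Psi(\x)=0$. Moreover, if $\nu(\x)=\operatorname{Const}$ then $\underline\nu=\nu$, so the right-hand side $1-\underline\nu\,\nu(\x)^{-1}$ of equation~\eqref{v_eq13} vanishes, whence $\rho=\operatorname{Const}$ and $\nabla\rho=0$; thus $\v_4=0$ as well and $\Lambda(\x)=0$ by~\eqref{v_eq13a}. Looking at the formula~\eqref{corr1} for the corrector, both $\Psi^\eps S_\eps$ and $(\nabla\rho)^\eps S_\eps$ are then zero, so $\K_\eps=0$. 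Theorem~\ref{th2} then reads
$$
\big\| (\L_\eps +I)^{-1} -(\L^0 +I)^{-1} \big\|_{L_2(\R^3) \to H^1(\R^3)} \le C_2 \eps,
$$
which is the second assertion.

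Since the argument is a direct specialization, there is no serious obstacle; the only point requiring a little care is the bookkeeping in part~2 — making sure that \emph{all} three ingredients ($\Psi=0$ from $\eta^0=\underline\eta$, $\nabla\rho=0$ from $\nu=\operatorname{Const}$, and hence $\Lambda=0$) are in place before concluding $\K_\eps=0$, rather than merely that the corrector is ``small''. Everything else is immediate from the already-established Theorems~\ref{th2} and~\ref{th3}.
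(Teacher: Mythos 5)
Your proposal is correct and follows exactly the paper's own route: the paper likewise deduces this proposition directly from Theorems~\ref{th2} and~\ref{th3} by invoking Proposition~\ref{prop_cases} and Remark~\ref{rem_eta0} to conclude $\Sigma=0$ in case~1 and $\Lambda=0$ (hence $\K_\eps=0$) in case~2. Nothing further is needed.
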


\section{The second order model operator in a bounded domain\label{Sec2}} 

\subsection{Definition of the operator ${L}_\eps$\label{Sec2.2}}
Let $\mathcal{O}\subset\mathbb{R}^3$ be a bounded domain of class $C^{1,1}$. 
Let $\n(\x)$ be the unit outer normal vector to $\partial \O$ at the point $\x \in \partial \O$.
The projection of the vector-valued function $\u(\x)$ 
onto the normal vector on the boundary is denoted by
$$
\u_n(\x):= \langle \u(\x), \n(\x)\rangle,
$$ 
its tangential component is denoted by
$$
\u_\tau(\x):= \u(\x) - \u_n(\x) \n(\x).
$$

Suppose that the coefficients $\mu_0$, $\eta$, and $\nu$ satisfy the assumptions of Subsection~\ref{Sec1.2}.
In $L_2(\mathcal{O};\mathbb{C}^3)$, we consider the quadratic form 
\begin{equation}\label{form_l_eps}
\begin{split}
{l}_\eps[\f,\f]& :=
\intop_{\O} \big\langle g^\eps(\x) b(\D) \f, b(\D) \f \big\rangle \, d\x 
\\
&= \intop_{\O} \Big(\big \langle (\eta^\eps(\x))^{-1} \rot (\mu_0^{-1/2}\f), \rot (\mu_0^{-1/2}\f)\big\rangle
+ \nu^\eps(\x) \big|\div (\mu_0^{1/2}\f)\big|^2\Big) \, d\x, 
\end{split}
\end{equation}
defined on the domain 
\begin{equation}\label{Dom_form_l_eps}
\begin{aligned}
\Dom {l}_\eps = \big\{  &\f \in L_2(\O;\C^3):\ \rot (\mu_0^{-1/2}\f) \in L_2(\O;\C^3),
\\
 &\div (\mu_0^{1/2}\f) \in L_2(\O),\ (\mu_0^{1/2} \f)_n \vert_{\partial \O}=0 \big\}.
\end{aligned}
\end{equation}
Apriori, conditions from  \eqref{Dom_form_l_eps} on a vector-valued function $\f \in L_2(\O;\C^3)$ (in particular, the boundary condition) are understood in the generalized sense; see \cite{BS1, BS2} and Definition \ref{def1} below. 
Since ${\partial \O \in C^{1,1}}$, the set \eqref{Dom_form_l_eps} coincides with 
\begin{equation*}
\Dom {l}_\eps = \left\{ \f \in H^1(\O;\C^3):\  (\mu_0^{1/2} \f)_n \vert_{\partial \O}=0 \right\}.
\end{equation*}
Then the boundary condition can be understood in the sense of the trace theorem. Under our assumptions, the form \eqref{form_l_eps} is coercive. The following two-sided estimates hold:
\begin{equation}\label{form_l_eps_est_O}
{\mathfrak c}_1 \| \f \|^2_{H^1(\O)} \le {l}_\eps[\f,\f] + \|\f\|_{L_2(\O)}^2 \le {\mathfrak c}_2 \| \f \|^2_{H^1(\O)}, \quad \f \in \Dom l_\eps.
\end{equation}
The constant ${\mathfrak c}_1$ depends on $|\mu_0|$, $|\mu_0^{-1}|$, $\|\eta\|_{L_\infty}$, $\|\nu^{-1}\|_{L_\infty}$, and the domain $\O$, the constant 
${\mathfrak c}_2$ depends on $|\mu_0|$, $|\mu_0^{-1}|$, $\|\eta^{-1}\|_{L_\infty}$, $\|\nu\|_{L_\infty}$, and the domain $\O$.
These properties were checked in \cite[Theorem~2.3]{BS1} under the assumption that $\partial \O \in C^2$
and in \cite[Theorem~2.6]{F} under the assumption that $\partial \O \in C^{3/2+\delta}$, $\delta >0$.

Thus, the form \eqref{form_l_eps} is closed and nonnegative. A selfadjoint operator in $L_2(\O;\C^3)$ generated by this form is denoted by $L_\eps$. Formally, $L_\eps$ is given by the differential expression 
\begin{equation*}
{L}_\eps = \mu_0^{-1/2} \rot (\eta^\eps(\x))^{-1} \rot \mu_0^{-1/2} - \mu_0^{1/2} \nabla \nu^\eps(\x) \div \mu_0^{1/2}
\end{equation*}
with the boundary conditions
$$
(\mu_0^{1/2} \f)_n \vert_{\partial \O}=0, \quad \bigl((\eta^\eps)^{-1} \rot (\mu_0^{-1/2} \f)\bigr)_\tau \vert_{\partial \O}=0.
$$
The second condition is ``natural'' and is not reflected in the domain of the quadratic form $l_\eps$.

\begin{remark}
 In \cite{Su_SIAM},  when studying the general operators of the form
 $
b(\D)^* g^\eps(\x) b(\D)
$
 with the Neumann boundary condition, it was assumed that the rank of the symbol 
$b(\bxi)$ is maximal for $0 \ne \bxi \in \C^n$. This condition ensured the coercivity of the corresponding quadratic form on the class $H^1(\O;\C^n)$. 
In our case, this condition is not satisfied, though for  $0\ne \bxi \in \R^3$ the rank of the matrix  $b(\bxi)$ is maximal; see \eqref{rank_cond}. 
We emphasize that the form $l_\eps$ is coercive due to the boundary condition $(\mu_0^{1/2} \f)_n \vert_{\partial \O}=0$.
\end{remark}

\textit{Our goal} is to approximate the generalized solution of the problem   
\begin{equation}\label{phi_eps_pr}
\begin{aligned}
&\mu_0^{-1/2} \rot \big(\eta^\eps(\x)\big)^{-1} \rot \big(\mu_0^{-1/2} \f_\eps(\x)\big)
- \mu_0^{1/2} \nabla \nu^\eps(\x) \div \big(\mu_0^{1/2}\f_\eps(\x)\big) + \f_\eps(\x)= \FF(\x),\  \x \in \O;
\\
&(\mu_0^{1/2} \f_\eps)_n \vert_{\partial \O}=0, \quad \bigl((\eta^\eps)^{-1} \rot (\mu_0^{-1/2} \f_\eps)\bigr)_\tau \vert_{\partial \O}=0,
\end{aligned}
\end{equation}
for small $\eps$. Here $\FF \!\in\! L_2(\O;\C^3)$.
The solution is understood in the weak sense: $\f_\eps \!\in\! H^1(\O;\C^3)$, $(\mu_0^{1/2} \f_\eps)_n \vert_{\partial \O}=0$, and 
\begin{equation}\label{identity_est}
l_\eps [\f_\eps, \bzeta]\! + \!(\f_\eps,\bzeta)_{L_2(\O)}\! = \! (\FF,\bzeta)_{L_2(\O)},\quad 
\bzeta \!\in\! H^1(\O;\C^3), \quad (\mu_0^{1/2} \bzeta)_n \vert_{\partial \O}\!=\!0.
\end{equation}
Then $\f_\eps = (L_\eps +I)^{-1}\FF$. Thus, we are interested in the behavior of the resolvent 
$(L_\eps +I)^{-1}$ for small $\eps$.

\subsection{The effective operator ${L}^0$\label{Sec2.3}}
Suppose that the matrix $\eta^0$ is defined by \eqref{v_eq6}, \eqref{eta0}. 
Recall that $\underline{\nu}$ is the harmonic average of the coefficient $\nu(\x)$. 
Let $g^0$ be the matrix \eqref{v_eq15}. The effective operator $L^0$ is a selfadjoint operator in  $L_2(\O;\C^3)$
generared by the quadratic form 
\begin{equation}\label{form_l0}
\begin{aligned}
&{l}^0[\f,\f] := 
\intop_{\O} \langle g^0 b(\D) \f, b(\D) \f \rangle \, d\x
\\
&= \intop_{\O} \left( \langle (\eta^0)^{-1} \rot (\mu_0^{-1/2}\f), \rot (\mu_0^{-1/2}\f) \rangle + \underline{\nu} |\div (\mu_0^{1/2}\f)|^2\right) \, d\x,
\\
&\f \in H^1(\O;\C^3), \quad (\mu_0^{1/2} \f)_n \vert_{\partial \O}=0.  
\end{aligned}
\end{equation}
By \eqref{eff_coeff_est}, the form \eqref{form_l0} satisfies the estimates 
\begin{equation}\label{form_l0_est}
\begin{split}
{\mathfrak c}_1 \| \f \|^2_{H^1(\O)} \le {l}^0[\f,\f] + \|\f\|^2_{L_2(\O)} \le {\mathfrak c}_2 \| \f \|^2_{H^1(\O)}, 
\\
 \f \in H^1(\O;\C^3),\quad (\mu_0^{1/2} \f)_n \vert_{\partial \O}=0,
\end{split}
\end{equation}
with the same constants as in \eqref{form_l_eps_est_O}.

Due to the smoothness of the boundary, the following regularity property  holds:
 the operator $L^0$
is given by the differential expression  
\begin{equation*}
{L}^0 = \mu_0^{-1/2} \rot (\eta^0)^{-1} \rot \mu_0^{-1/2} - \mu_0^{1/2} \nabla \underline{\nu} \div \mu_0^{1/2}
\end{equation*}
on the domain
$$
\Dom L^0\! =\! \big\{ \f \in H^2(\O;\C^3), \ (\mu_0^{1/2} \f)_n \vert_{\partial \O}=0,\ 
\bigl((\eta^0)^{-1} \rot (\mu_0^{-1/2} \f)\bigr)_\tau \vert_{\partial \O}\!=0\big\}.
$$
Herewith,
\begin{equation}\label{L0_H2}
\| (L^0 + I)^{-1}\|_{L_2(\O) \to H^2(\O)} \le \widehat{c},  
\end{equation}
where the constant $\wh{c}$ depends on $|\mu_0|$, $|\mu_0^{-1}|$, $\| \eta \|_{L_\infty}$, 
$\| \eta^{-1} \|_{L_\infty}$, $\| \nu \|_{L_\infty}$, $\| \nu^{-1} \|_{L_\infty}$, and the domain~$\O$.

\begin{remark}
Under the assumption that $\partial \O \in C^{1,1}$ (and for sufficiently smooth coefficients), such regularity property 
for the solutions of the Dirichlet or Neumann problems for  
the second order strongly elliptic equations can be found, e.~g., in the book \cite[Chapter 4]{McL}.
The proof is based on the method of difference  quotients and essentialy relies on the coercivity condition for the quadratic form. In our case, the coefficients of the operator $L^0$ are constant and the coercivity condition~\eqref{form_l0_est} holds, but the boundary conditions are of mixed type. It is easy to check the regularity 
for the operator $L^0$ by the same method as before.  
\end{remark}

Let $\f_0$ be the solution of the ``homogenized''  problem  
\begin{equation}\label{phi_0_pr}
\begin{aligned}
&\mu_0^{-1/2} \rot \big(\eta^0\big)^{-1} \rot \big(\mu_0^{-1/2} \f_0(\x)\big)
- \mu_0^{1/2} \nabla \underline{\nu} \div \big(\mu_0^{1/2}\f_0(\x)\big) + \f_0(\x)= \FF(\x),\quad  \x \in \O;
\\
&(\mu_0^{1/2} \f_0)_n \vert_{\partial \O}=0, \quad \bigl((\eta^0)^{-1} \rot (\mu_0^{-1/2} \f_0)\bigr)_\tau \vert_{\partial \O}=0.
\end{aligned}
\end{equation}
In other words, the function $\f_0 \in H^1(\O;\C^3)$ satisfies the boundary condition $(\mu_0^{1/2} \f_0)_n \vert_{\partial \O}=0$ and the identity 

\begin{equation}\label{phi_0_identity}
l^0 [\f_0, \bzeta] + (\f_0, \bzeta)_{L_2(\O)} = (\FF, \bzeta)_{L_2(\O)}, 
\quad \bzeta \in H^1(\O;\C^3), \quad (\mu_0^{1/2} \bzeta)_n \vert_{\partial \O}=0.
\end{equation}
Then $\f_0 = (L^0 +I)^{-1}\FF$. Estimate \eqref{L0_H2} means that $\f_0 \in H^2(\O;\C^3)$ and 
\begin{equation}\label{phi_0_est}
\| \f_0 \|_{H^2(\O)} \le \wh{c} \| \FF \|_{L_2(\O)}.
\end{equation}

\subsection{Estimates in the neighborhood of the boundary}

We put
$$
(\partial\mathcal{O})_{\varepsilon} :=\left\lbrace \mathbf{x}\in \mathbb{R}^d : \mathrm{dist}\,\lbrace \mathbf{x};\partial\mathcal{O}\rbrace <\varepsilon \right\rbrace, \quad \eps >0. 
$$
We choose the numbers  $\varepsilon _0, \varepsilon _1\in (0,1]$ satisfying the following condition.

\begin{condition}
\label{condition varepsilon}
The number $\varepsilon _0\in (0,1]$ is such that the strip  
$(\partial\mathcal{O})_{\varepsilon_0}$ can be covered by a finite number of open sets admitting diffeomorphisms of class  $C^{0,1}$ rectifying the boundary $\partial\mathcal{O}$.
Let $\varepsilon _1 :=\varepsilon _0 (1+r_1)^{-1}$, where $2r_1=\mathrm{diam}\,\Omega$.
\end{condition}

Clearly, $\varepsilon _1$ depends only on the domain $\mathcal{O}$ and the parameters of the lattice $\Gamma$.
Note that Condition~\ref{condition varepsilon} is ensured only by the Lipschitz property of the boundary. We have imposed a more restrictive assumption $\partial\mathcal{O}\in C^{1,1}$ in order to ensure estimate~\eqref{L0_H2}.

The following statements were checked in~\cite[Section~5]{PSu}; Lemma~\ref{Lemma 3.6 from Su15} is similar to  Lemma~2.6 from \cite{ZhPas1}.

\begin{lemma}
\label{lemma ots int O_eps B_eps}
Suppose that Condition~\textnormal{\ref{condition varepsilon}} is satisfied.
Let $0< \varepsilon\leqslant\varepsilon _0$. Denote $B_\eps:= \O \cap (\partial \O)_\eps$. 

\noindent \emph{1)} For any function $u\in H^1(\O)$ we have
\begin{equation*}
\int\limits_{B_\varepsilon }\vert u\vert ^2 \,d\mathbf{x}\leqslant \beta\varepsilon \Vert u\Vert _{H^1(\O)}\Vert u\Vert _{L_2(\O)}.
\end{equation*}

\noindent \emph{2)}  
For any function $u\in H^1(\R^3)$ we have 
 
\begin{equation*}
\int\limits_{(\partial \O)_\varepsilon }\vert u\vert ^2 \,d\mathbf{x}\leqslant \beta\varepsilon \Vert u\Vert _{H^1(\R^3)}\Vert u\Vert _{L_2(\R^3)}.
\end{equation*}

\noindent
 The constant $\beta$ depends only on the domain $\mathcal{O}$.
\end{lemma}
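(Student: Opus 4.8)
This lemma is classical; it was checked in \cite[Section~5]{PSu} and is close to \cite[Lemma~2.6]{ZhPas1}, and I would reconstruct it as follows. Since $0 < \eps \le \eps_0$, we have $B_\eps \subset (\partial\O)_\eps \subset (\partial\O)_{\eps_0}$, so Condition~\ref{condition varepsilon} supplies a finite cover $U_1,\dots,U_N$ of $(\partial\O)_{\eps_0}$ together with bi-Lipschitz diffeomorphisms $\kappa_j$ mapping $U_j$ onto the cube $Q=(-1,1)^3$, sending $\partial\O\cap U_j$ into $\{y_3=0\}$ and $\O\cap U_j$ into $Q^+:=(-1,1)^2\times(0,1)$. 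The Lipschitz constants of $\kappa_j$ and $\kappa_j^{-1}$ depend only on $\O$. Using a Lebesgue-number argument for the cover together with these Lipschitz bounds, one checks that $\kappa_j(U_j\cap B_\eps)$ lies in a layer $\{0<y_3<c\eps\}$ with $c=c(\O)$; after possibly shrinking $\eps_0$ (still depending only on $\O$) we may assume $c\eps<1$, and the complementary range, where $\eps$ is comparable to $\eps_0$, is trivial since then $\int_{B_\eps}|u|^2\le\|u\|_{L_2(\O)}^2\le\eps_0^{-1}\eps\|u\|_{L_2(\O)}\|u\|_{H^1(\O)}$.

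The heart of the matter is a one–dimensional estimate. Fix $j$ and put $\wt u:=u\circ\kappa_j^{-1}\in H^1(Q^+)$. For a.e.\ $y'\in(-1,1)^2$ the slice $w:=\wt u(y',\cdot)$ belongs to $H^1(0,1)$, and for $0<s<c\eps$, $\tau\in(0,1)$ the fundamental theorem of calculus gives $w(s)^2=w(\tau)^2-\int_s^\tau 2w(t)w'(t)\,dt\le w(\tau)^2+2\int_0^1|w(t)||w'(t)|\,dt$. Averaging in $\tau$ over $(0,1)$, integrating in $s$ over $(0,c\eps)$, then integrating in $y'$ and applying the Cauchy--Schwarz inequality to the cross term yields
\[
\int_{\kappa_j(U_j\cap B_\eps)}|\wt u|^2\,dy\le c\eps\|\wt u\|_{L_2(Q^+)}^2+2c\eps\|\wt u\|_{L_2(Q^+)}\|\nabla\wt u\|_{L_2(Q^+)}\le 3c\eps\|\wt u\|_{L_2(Q^+)}\|\wt u\|_{H^1(Q^+)},
\]
where we used $\|\wt u\|_{L_2}\le\|\wt u\|_{H^1}$. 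Transferring back by the bi-Lipschitz change of variables (whose Jacobians are bounded above and below by constants depending only on $\O$) gives $\int_{U_j\cap B_\eps}|u|^2\le C(\O)\,\eps\,\|u\|_{L_2(\O)}\|u\|_{H^1(\O)}$, and summing the elementary bound $\int_{B_\eps}|u|^2\le\sum_{j=1}^N\int_{U_j\cap B_\eps}|u|^2$ over the $N=N(\O)$ charts yields assertion~1).

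For assertion~2) the argument is word for word the same, except that the cover $\{U_j\}$ is now taken in $\R^3$ (so each $U_j$ also meets the exterior of $\O$), the rectified layer is $\{|y_3|<c\eps\}$, the averaging in $\tau$ is carried out over $(0,1)$ or $(-1,0)$ (or over all of $(-1,1)$), and the norms $\|\cdot\|_{H^1(\R^3)}$, $\|\cdot\|_{L_2(\R^3)}$ replace those over $\O$. The only genuinely delicate point is the geometric bookkeeping flagged in the first paragraph --- namely, that $B_\eps$ (respectively $(\partial\O)_\eps$) is contained, chart by chart, in a layer of thickness $O(\eps)$ with constant depending only on $\O$; once this is in place, the rest reduces to the one–dimensional computation above and routine changes of variables.
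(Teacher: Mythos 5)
Your reconstruction is correct: the paper itself gives no proof of this lemma, deferring to \cite[Section~5]{PSu} and \cite[Lemma~2.6]{ZhPas1}, and the argument in those sources is exactly the one you describe --- rectification of the boundary by the Lipschitz charts from Condition~\ref{condition varepsilon}, the one-dimensional Newton--Leibniz estimate with averaging over the transverse variable, Cauchy--Schwarz, and summation over the finitely many charts. The geometric point you flag (that each chart image of $B_\eps$ sits in a layer of thickness $O(\eps)$, with the range of $\eps$ comparable to $\eps_0$ handled trivially) is indeed the only delicate step, and you dispose of it correctly.
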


\begin{lemma}
\label{Lemma 3.6 from Su15}
Suppose that Condition~\textnormal{\ref{condition varepsilon}} is satisfied.
Let  $h(\mathbf{x})$ be a \hbox{$\Gamma$-periodic} function in~$\mathbb{R}^3$ such that $h\in L_2(\Omega)$.
Let $S_\varepsilon$ be the operator~\eqref{S_eps}. Denote~$\beta _* :=\beta (1+r_1)$, where $2r_1=\mathrm{diam}\,\Omega$. Then for $0<\varepsilon\leqslant\varepsilon _1$ and $\mathbf{u}\in H^1(\mathbb{R}^3;\mathbb{C}^k)$ we have 
\begin{equation*}
\int\limits_{(\partial\mathcal{O})_\varepsilon}\vert h^\varepsilon (\mathbf{x})\vert ^2 \vert (S_\varepsilon \mathbf{u})(\mathbf{x})\vert ^2\,d\mathbf{x}
\leqslant \beta _*\varepsilon\vert \Omega\vert ^{-1}\Vert h\Vert ^2_{L_2(\Omega)}\Vert \mathbf{u}\Vert _{H^1(\mathbb{R}^3)}\Vert \mathbf{u}\Vert _{L_2(\mathbb{R}^3)}.
\end{equation*}
\end{lemma}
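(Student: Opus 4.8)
The plan is to reduce the bound to the pure boundary-strip estimate of Lemma~\ref{lemma ots int O_eps B_eps}(2) applied to $\mathbf{u}$, by peeling off the factor $|h^\varepsilon|^2$ with the help of the $\Gamma$-periodicity of $h$ and the explicit averaging form of $S_\varepsilon$. The key observations I would use are: (i) $\Omega$ is centrally symmetric, so $|\mathbf{z}|\le r_1$ for all $\mathbf{z}\in\Omega$, hence $\mathbf{x}\in(\partial\mathcal{O})_\varepsilon$ and $\mathbf{z}\in\Omega$ force $\mathbf{x}-\varepsilon\mathbf{z}\in(\partial\mathcal{O})_{\varepsilon(1+r_1)}$; (ii) by Cauchy--Schwarz for the probability measure $|\Omega|^{-1}\,d\mathbf{z}$ on $\Omega$, $|(S_\varepsilon\mathbf{u})(\mathbf{x})|^2\le|\Omega|^{-1}\int_\Omega|\mathbf{u}(\mathbf{x}-\varepsilon\mathbf{z})|^2\,d\mathbf{z}$; and (iii) for any $\mathbf{a}\in\mathbb{R}^3$ one has $\int_\Omega|h(\mathbf{a}+\mathbf{z})|^2\,d\mathbf{z}=\|h\|^2_{L_2(\Omega)}$, since the integral of a $\Gamma$-periodic function over a translate of the fundamental cell is translation-independent.

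Using (ii), Tonelli's theorem, and --- for each fixed $\mathbf{z}$ --- the substitution $\mathbf{y}=\mathbf{x}-\varepsilon\mathbf{z}$ together with (i), I would obtain
\begin{equation*}
\int_{(\partial\mathcal{O})_\varepsilon}|h^\varepsilon(\mathbf{x})|^2|(S_\varepsilon\mathbf{u})(\mathbf{x})|^2\,d\mathbf{x}
\le|\Omega|^{-1}\int_\Omega\int_{(\partial\mathcal{O})_{\varepsilon(1+r_1)}}|h^\varepsilon(\mathbf{y}+\varepsilon\mathbf{z})|^2|\mathbf{u}(\mathbf{y})|^2\,d\mathbf{y}\,d\mathbf{z},
\end{equation*}
where the enlargement of the domain of integration is legitimate because the integrand is nonnegative. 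Interchanging the order of integration, writing $h^\varepsilon(\mathbf{y}+\varepsilon\mathbf{z})=h(\mathbf{y}/\varepsilon+\mathbf{z})$, and applying (iii) with $\mathbf{a}=\mathbf{y}/\varepsilon$ gives
\begin{equation*}
\int_{(\partial\mathcal{O})_\varepsilon}|h^\varepsilon(\mathbf{x})|^2|(S_\varepsilon\mathbf{u})(\mathbf{x})|^2\,d\mathbf{x}
\le|\Omega|^{-1}\|h\|^2_{L_2(\Omega)}\int_{(\partial\mathcal{O})_{\varepsilon(1+r_1)}}|\mathbf{u}(\mathbf{y})|^2\,d\mathbf{y}.
\end{equation*}
Finally, since $\varepsilon\le\varepsilon_1=\varepsilon_0(1+r_1)^{-1}$ we have $\varepsilon(1+r_1)\le\varepsilon_0$, so Lemma~\ref{lemma ots int O_eps B_eps}(2), applied with the strip radius $\varepsilon(1+r_1)$ in place of $\varepsilon$, bounds the remaining integral by $\beta\varepsilon(1+r_1)\|\mathbf{u}\|_{H^1(\mathbb{R}^3)}\|\mathbf{u}\|_{L_2(\mathbb{R}^3)}$, and recalling $\beta_*=\beta(1+r_1)$ yields exactly the asserted estimate.

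I do not expect a genuine obstacle here: the argument is a short combination of Cauchy--Schwarz, two applications of Tonelli's theorem, and the translation-invariance of the integral of a periodic function over a fundamental cell. The only point demanding attention is the tracking of the boundary strip: shifting a point of $(\partial\mathcal{O})_\varepsilon$ by $\varepsilon\mathbf{z}$ with $\mathbf{z}\in\Omega$ may move it a distance up to $\varepsilon r_1$ away from $\partial\mathcal{O}$, so one is forced onto $(\partial\mathcal{O})_{\varepsilon(1+r_1)}$; for Lemma~\ref{lemma ots int O_eps B_eps}(2) to be applicable there, its radius must not exceed $\varepsilon_0$, and this is precisely what the choice $\varepsilon_1=\varepsilon_0(1+r_1)^{-1}$ in Condition~\ref{condition varepsilon} secures. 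One should also note that $S_\varepsilon$ and the whole estimate live on $\mathbb{R}^3$, so no issue of restriction to $\mathcal{O}$ or of extending $\mathbf{u}$ arises.
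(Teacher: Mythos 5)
Your argument is correct, and it is essentially the standard proof to which the paper defers (the one in [PSu, Section~5], going back to [ZhPas1, Lemma~2.6]): Cauchy--Schwarz applied to the averaging integral defining $S_\varepsilon$, Tonelli and the shift $\mathbf{y}=\mathbf{x}-\varepsilon\mathbf{z}$ which enlarges the strip to $(\partial\mathcal{O})_{\varepsilon(1+r_1)}$, translation invariance of $\int|h|^2$ over a shifted cell, and finally Lemma~\ref{lemma ots int O_eps B_eps}(2), with the constraint $\varepsilon\le\varepsilon_1$ ensuring the enlarged strip radius stays below $\varepsilon_0$. The only point you leave implicit is that Lemma~\ref{lemma ots int O_eps B_eps}(2) is stated for scalar functions while $\mathbf{u}$ is $\mathbb{C}^k$-valued, but summing the componentwise bounds and applying Cauchy--Schwarz in the index $j$ gives the vector version with the same constant, so nothing is lost.
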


\section{The results for the model second order equation\\ in a bounded domain\label{Sec3}}

\subsection{Approximation of the resolvent of the operator $L_\eps$}

Now, we formulate our main results about approximation of the solution of problem \eqref{phi_eps_pr}. 
For convenience of further references,  the following set of the parameters is called the ``problem data'':
\begin{equation}\label{data}
\begin{split}
&|\mu_0|,\  |\mu_0^{-1}|,\  \| \eta \|_{L_\infty}, \ \| \eta^{-1} \|_{L_\infty},\  \| \nu \|_{L_\infty},\  \| \nu^{-1} \|_{L_\infty};
\\
&\text{the parameters of the lattice } \Gamma; \ \text{and the domain} \ \O.
\end{split}
\end{equation}

\begin{theorem}\label{th1_O}
Let  $\f_\eps$ be the solution of problem \eqref{phi_eps_pr}, and let $\f_0$ be the solution of the homogenized problem \eqref{phi_0_pr} with $\FF \in L_2(\O;\C^3)$. 
Suppose that the number $\eps_1$ satisfies Condition~\textnormal{\ref{condition varepsilon}}. Then for $0 < \eps \le \eps_1$ we have
\begin{equation}\label{res1}
\|  \f_\eps - \f_0 \|_{L_2(\O)} \le {\mathcal C}_1 \eps \| \FF \|_{L_2(\O)}. 
\end{equation}
In operator terms,
\begin{equation*}
\big\|  (L_\eps +I)^{-1} - (L^0 +I)^{-1}  \big\|_{L_2(\O) \to L_2(\O)} \le {\mathcal C}_1 \eps. 
\end{equation*}
The constant $\mathcal{C}_1$ depends only on the problem data \eqref{data}.
\end{theorem}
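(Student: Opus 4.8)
The plan is to reduce the estimate in the bounded domain $\O$ to the already-known estimate in $\R^3$ (Theorem \ref{th1}, and its $H^1$ refinement Theorem \ref{th2}) via a carefully constructed boundary layer correction. First I would take the homogenized solution $\f_0 = (L^0+I)^{-1}\FF$, which by \eqref{phi_0_est} lies in $H^2(\O;\C^3)$ with $\|\f_0\|_{H^2(\O)}\le \wh c\,\|\FF\|_{L_2(\O)}$. Using an extension operator $P_\O : H^2(\O;\C^3)\to H^2(\R^3;\C^3)$ bounded in the relevant norms, I would set $\wt\f_0 := P_\O \f_0$ and consider the function $\wt\f_0 + \eps\Lambda^\eps S_\eps b(\D)\wt\f_0$ as a first-order approximation; this is precisely the $\R^3$ corrector from \eqref{corr0}--\eqref{corr1} applied to a suitable right-hand side. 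The natural first step in the proof is to introduce the \emph{boundary layer correction term} $\s_\eps := \f_\eps - \f_0 - \eps\Lambda^\eps S_\eps b(\D)\wt\f_0$ (or an appropriate variant adapted to the mixed boundary conditions \eqref{0.13}), and to derive the integral identity that $\s_\eps$ satisfies, testing \eqref{identity_est} against $\bzeta\in H^1(\O;\C^3)$ with $(\mu_0^{1/2}\bzeta)_n|_{\partial\O}=0$.

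The second step is to show that the discrepancy produced by plugging the corrected approximation into the form $l_\eps[\,\cdot\,,\bzeta]+(\,\cdot\,,\bzeta)_{L_2(\O)}$ consists of two kinds of terms: (i) ``interior'' terms that, thanks to the defining cell equation \eqref{Lambda_eq} for $\Lambda$ and the flux approximation of Theorem \ref{th3}, are $O(\eps)\|\FF\|_{L_2(\O)}$ in the appropriate dual norm when tested against $\bzeta$ (here the Steklov smoothing is essential: Propositions \ref{prop_Seps - I} and \ref{prop f^eps S_eps} let one commute $S_\eps$ past derivatives and control $\Lambda^\eps S_\eps$ uniformly); and (ii) ``boundary layer'' terms supported in $B_\eps = \O\cap(\partial\O)_\eps$, arising because the corrector $\eps\Lambda^\eps S_\eps b(\D)\wt\f_0$ does not exactly satisfy the boundary conditions of problem \eqref{phi_eps_pr}. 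The terms of type (ii) are where Lemmas \ref{lemma ots int O_eps B_eps} and \ref{Lemma 3.6 from Su15} enter: one estimates integrals of $|h^\eps|^2|S_\eps\mathbf u|^2$ over the $\eps$-strip by $C\eps\|\mathbf u\|_{H^1(\R^3)}\|\mathbf u\|_{L_2(\R^3)}$, and since the relevant $\mathbf u$ is built from $b(\D)\wt\f_0$ with $\|b(\D)\wt\f_0\|_{H^1}\le C\|\FF\|_{L_2(\O)}$, one gains the factor $\eps$. Combining, one first obtains the energy (i.e.\ $H^1$) bound $\|\s_\eps\|_{H^1(\O)}\le C\sqrt\eps\,\|\FF\|_{L_2(\O)}$ by coercivity \eqref{form_l_eps_est_O} and Cauchy--Schwarz.

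The third and final step is to upgrade from the $\sqrt\eps$ energy estimate to the sharp $O(\eps)$ $L_2$-estimate \eqref{res1}. Here I would use a duality (Aubin--Nitsche type) argument: for arbitrary $\bF\in L_2(\O;\C^3)$ with $\|\bF\|_{L_2(\O)}=1$, introduce the solution $\bphi_\eps$ of the adjoint problem (which is the same problem since $L_\eps$ is selfadjoint) with right-hand side $\bF$ and its homogenized counterpart $\bphi_0=(L^0+I)^{-1}\bF\in H^2(\O;\C^3)$; then $( \f_\eps-\f_0,\bF)_{L_2(\O)}$ can be rewritten, via the integral identities, as a pairing of the discrepancy of $\f_\eps$'s approximation with the discrepancy of $\bphi_\eps$'s approximation. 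Each discrepancy carries one power of $\sqrt\eps$ from the boundary layer analysis just described (now applied symmetrically to both $\f_0$ and $\bphi_0$, both controlled in $H^2$), so the product is $O(\eps)$. The term $\eps\Lambda^\eps S_\eps b(\D)\wt\f_0$ itself is $O(\eps)$ in $L_2(\O)$ by Proposition \ref{prop f^eps S_eps} together with \eqref{Lambda_est}, so it can be discarded, yielding \eqref{res1}, and hence the operator-norm statement.

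I expect the main obstacle to be the treatment of the boundary layer term in step two under the \emph{mixed} (partly essential, partly natural) boundary conditions \eqref{0.13}: unlike the pure Dirichlet or Neumann cases handled in \cite{PSu, Su13, Su_SIAM}, one must choose the correction so that the essential condition $(\mu_0^{1/2}\f)_n|_{\partial\O}=0$ is respected by the test functions while the natural (tangential curl) condition contributes a boundary discrepancy that must be estimated; getting a correction term that is both admissible for the form domain and small requires either modifying $\Lambda^\eps S_\eps b(\D)\wt\f_0$ by a suitable cutoff near $\partial\O$ or by an additional small correction, and then carefully bounding the resulting commutator and boundary integrals by Lemmas \ref{lemma ots int O_eps B_eps} and \ref{Lemma 3.6 from Su15}. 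The duality step in the end is technically delicate as well, since it requires the same boundary-layer estimates to hold for the dual solution, but the structure is symmetric and poses no conceptual difficulty once step two is in place.
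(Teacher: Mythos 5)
Your proposal follows essentially the same route as the paper: extend $\f_0$ to $\R^3$, invoke the whole-space Theorems \ref{th1}--\ref{th3} for the associated problem, introduce a boundary-layer correction term (the paper realizes it as the solution $\s_\eps$ of the auxiliary identity \eqref{popravka} and then cuts off the corrector near $\partial\O$ via $\theta_\eps$ to get $\bphi_\eps$, exactly the ``variant with a cutoff'' you anticipate), prove the $O(\sqrt\eps)$ energy bound with Lemmas \ref{lemma ots int O_eps B_eps} and \ref{Lemma 3.6 from Su15}, and finally upgrade to $O(\eps)$ in $L_2$ by the duality argument in which the boundary-strip localization of the residual and the smallness of $\|\D\bzeta_\eps\|_{L_2(B_\eps)}$ for the dual solution each contribute a factor $\sqrt\eps$. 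This matches the paper's proof in both structure and all key estimates.
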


To approximate the solution in $H^1(\O;\C^3)$, we need to introduce a corrector. 
We fix a linear continuous extension operator  
\begin{equation*}
P_\O: H^s(\O;\C^3) \to H^s(\R^3;\C^3),\quad s=0,1,2. 
\end{equation*}
Such an operator exists for any bounded domain with Lipschitz boundary
(see, e.~g., \cite{St}). Denote  
\begin{equation}\label{3.5}
\| P_\O\|_{ H^s(\O) \to H^s(\R^3)} =: C_\O^{(s)} ,\quad s=0,1,2. 
\end{equation}
The constants $C_\O^{(s)}$ depend only on the domain $\O$.
Next, let $[\Lambda^\eps]$ be the operator of multiplication by the matrix-valued function $\Lambda ({\eps}^{-1}\x)$,
and let $R_\O$ be the restriction operator of functions in $\R^3$ onto the domain $\O$. Let $S_\eps$ be the Steklov smoothing operator; see \eqref{S_eps}.  We introduce a corrector
$$
K_\eps := R_\O [\Lambda^\eps] S_\eps b(\D) P_\O (L^0 + I)^{-1}.
$$
The operator
$
b(\D) P_\O (L^0 + I)^{-1}
$
is continuous from $L_2(\O;\C^3)$ to $H^1(\R^3;\C^4)$.
As has been already mentioned, the operator $[\Lambda^\eps] S_\eps$ is continuous from  $H^1(\R^3;\C^4)$ to $H^1(\R^3;\C^3)$. Hence, the corrector $K_\eps$ is a continuous mapping of $L_2(\O;\C^3)$ to $H^1(\O;\C^3)$.
Using \eqref{def} and \eqref{v_eq13a}, we write the corrector as 
\begin{equation}\label{corr1_OO}
K_\eps = R_\O \left( \mu_0^{-1/2}\Psi^\eps S_\eps \rot \mu_0^{-1/2} + \mu_0^{1/2} (\nabla \rho)^\eps S_\eps \div \mu_0^{1/2}
\right) P_\O (L^0 +I)^{-1}. 
\end{equation}

Let $\f_0$ be the solution of problem \eqref{phi_0_pr}. We put $\wt{\f}_0 := P_\O \f_0$ and
\begin{equation}\label{corr1_O}
\begin{split}
\wt{\bpsi}_\eps(\x) &:= \wt{\f}_0 (\x)+ \eps \mu_0^{-1/2}\Psi^\eps(\x) (S_\eps \rot \mu_0^{-1/2} \wt{\f}_0)(\x) 
\\
&\qquad\qquad\,
+ \eps \mu_0^{1/2} (\nabla \rho)^\eps(\x) (S_\eps \div \mu_0^{1/2} \wt{\f}_0)(\x), \quad \x \in \R^3,
\\
\bpsi_\eps &:= \wt{\bpsi}_\eps \vert_{\O}.
\end{split}
\end{equation} 
Then
\begin{equation}\label{corr2_O}
{\bpsi}_\eps = {\f}_0 + \eps \Lambda^\eps S_\eps b(\D) \wt{\f}_0 = (L^0 +I)^{-1} \FF + \eps K_\eps \FF.  
\end{equation}

\begin{theorem}\label{th2_O}
Suppose that the assumptions of Theorem \emph{\ref{th1_O}} are satisfied. Let~$\bpsi_\eps$ be defined by  \eqref{corr1_O}. Then for $0 < \eps \le \eps_1$ we have
\begin{equation}\label{res3}
\|  \f_\eps - \bpsi_\eps \|_{H^1(\O)} \le {\mathcal C}_2 \eps^{1/2} \| \FF \|_{L_2(\O)}. 
\end{equation}
In operator terms,
\begin{equation*}
\|  (L_\eps +I)^{-1} - (L^0 +I)^{-1}  -\eps K_\eps \|_{L_2(\O) \to H^1(\O)} \le {\mathcal C}_2 \eps^{1/2}. 
\end{equation*}
 The constant $\mathcal{C}_2$ depends only on the problem data \eqref{data}.
\end{theorem}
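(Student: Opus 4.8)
The plan is to follow the standard boundary-layer strategy: transplant the $\R^3$-approximation of Theorem \ref{th2} into $\O$, isolate the defect as a boundary-layer function $\s_\eps$, and bound $\s_\eps$ in $H^1(\O)$ by $C\eps^{1/2}\|\FF\|_{L_2(\O)}$. Concretely, set $\wt\f_0 = P_\O\f_0$, let $\wt\bpsi_\eps$ be the full-space first-order approximation built from $\wt\f_0$ as in \eqref{corr1_O}, and put $\s_\eps := \f_\eps - \bpsi_\eps = \f_\eps - \wt\bpsi_\eps\big|_\O$. Since both $\f_\eps$ and $\bpsi_\eps$ satisfy the essential boundary condition $(\mu_0^{1/2}\cdot)_n|_{\partial\O}=0$ (the latter because the corrector terms carry a $\rot$ or a gradient that, paired with $\mu_0^{-1/2}$ or $\mu_0^{1/2}$ appropriately, is tangential on $\partial\O$ — this needs to be checked, see below), $\s_\eps$ is an admissible test function. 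First I would write the integral identity \eqref{identity_est} for $\f_\eps$ with $\bzeta = \s_\eps$, subtract the corresponding identity tested against $\bpsi_\eps$, and use the coercivity \eqref{form_l_eps_est_O} to get $\|\s_\eps\|_{H^1(\O)}^2 \lesssim |l_\eps[\bpsi_\eps,\s_\eps] + (\bpsi_\eps,\s_\eps)_{L_2(\O)} - (\FF,\s_\eps)_{L_2(\O)}|$.

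The heart of the argument is to estimate this right-hand side. The natural route is to compare $l_\eps[\bpsi_\eps,\s_\eps]$, i.e. $\int_\O \langle g^\eps b(\D)\bpsi_\eps, b(\D)\s_\eps\rangle\,d\x$, with what the homogenized flux predicts. Using the explicit form of $b(\D)\bpsi_\eps$ and the corrector structure, one finds $g^\eps b(\D)\bpsi_\eps = \wt g^\eps S_\eps b(\D)\wt\f_0 + (\text{terms with }\eps\text{-derivatives of }\Psi^\eps,(\nabla\rho)^\eps)$; the first piece is exactly the ``flux'' approximation from Theorem \ref{th3}/Lemma \ref{lem1.7}, transplanted to $\O$. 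The residual terms contain factors $\eps \D(\Psi^\eps S_\eps \cdots)$ — when the $\eps^{-1}$ from differentiating the oscillating factor hits, these produce $O(1)$ quantities of the form $(\D\Psi)^\eps S_\eps b(\D)\wt\f_0$ etc. I would pair these against $b(\D)\s_\eps$ and, crucially, integrate by parts to move derivatives off $\s_\eps$; the integration-by-parts generates a bulk term (controlled by the equation for $\Psi,\rho$, i.e. \eqref{v_eq11}, \eqref{v_eq13}, which is precisely why the corrector was defined via these cell problems) plus a \emph{boundary} term over $\partial\O$. The bulk terms are handled exactly as in the $\R^3$ proof (Theorems \ref{th2}, \ref{th3}) applied to $\wt\f_0 \in H^2(\R^3)$, giving contributions $\le C\eps\|\FF\|_{L_2(\O)}$ via \eqref{phi_0_est}.

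The main obstacle — and the source of the $\eps^{1/2}$ rather than $\eps$ — is the boundary layer: both the boundary term from the integration by parts and the leftover volume integrals supported on $B_\eps = \O\cap(\partial\O)_\eps$ (where the Steklov smoothing of $\wt\f_0$ and the periodic extension interact badly near $\partial\O$, and where $S_\eps - I$ is not small) must be estimated. Here the key tools are Lemma \ref{lemma ots int O_eps B_eps} and Lemma \ref{Lemma 3.6 from Su15}: a typical term like $\int_{B_\eps} |(\D\Psi)^\eps||S_\eps b(\D)\wt\f_0||b(\D)\s_\eps|\,d\x$ is bounded, via Cauchy--Schwarz and Lemma \ref{Lemma 3.6 from Su15}, by $C(\beta_*\eps|\Omega|^{-1}\|\D\Psi\|_{L_2(\Omega)}^2)^{1/2}\|b(\D)\wt\f_0\|_{H^1(\R^3)}^{1/2}\|b(\D)\wt\f_0\|_{L_2(\R^3)}^{1/2}\|b(\D)\s_\eps\|_{L_2(\O)}$, which is $\le C\eps^{1/2}\|\FF\|_{L_2(\O)}\|\s_\eps\|_{H^1(\O)}$ using \eqref{phi_0_est} and \eqref{Lambda_est}. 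The $\eps^{1/2}$ is optimal and unavoidable, reflecting the boundary influence exactly as in \eqref{0.5}. A careful preliminary point, which I would dispatch first, is verifying that $\bpsi_\eps$ indeed lies in the form domain (the normal trace of $\mu_0^{1/2}\bpsi_\eps$ vanishes): the corrector's first term contributes $\mu_0^{1/2}\cdot\mu_0^{-1/2}\Psi^\eps(\cdots) = \Psi^\eps(\cdots)$ whose columns are $\rot\p_j$-type, divergence-free but not obviously normal-free, so one must instead argue via the approximation $\f_\eps - \bpsi_\eps$ remaining admissible because $\f_\eps$ and $\f_0$ share the boundary condition and the corrector terms, being $O(\eps)$ in a suitable trace sense, can be absorbed — or, more cleanly, work with a modified test function and estimate the discrepancy separately. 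Finally, reassembling all contributions gives $\|\s_\eps\|_{H^1(\O)}^2 \le C\eps^{1/2}\|\FF\|_{L_2(\O)}\|\s_\eps\|_{H^1(\O)} + C\eps\|\FF\|_{L_2(\O)}^2$, whence \eqref{res3}; the operator-norm reformulation is immediate since $\f_\eps = (L_\eps+I)^{-1}\FF$, $\bpsi_\eps = (L^0+I)^{-1}\FF + \eps K_\eps\FF$ by \eqref{corr2_O}.
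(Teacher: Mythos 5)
Your overall strategy (transplant the whole-space results, isolate a boundary-layer defect, estimate it in $H^1$ by $O(\eps^{1/2})$ using the $\eps$-neighborhood lemmas) is the same as the paper's, and the ingredients you name for the interior part are the right ones: the discrepancy $(\wt g^\eps-g^0)b(\D)\f_0$ reduces to $\Sigma^\eps\rot\mu_0^{-1/2}\f_0$ with $\Sigma^\eps$ a matrix of gradients of $\eps\Phi_j^\eps$, so after writing $\eps(\nabla\Phi_j^\eps)h_j=\eps\nabla(\Phi_j^\eps h_j)-\eps\Phi_j^\eps\nabla h_j$ the gradient part pairs to zero with $\rot\mu_0^{-1/2}\bzeta$ away from $\partial\O$ (via $\div\rot=0$ and a cut-off $\theta_\eps$ supported in $(\partial\O)_\eps$), and the boundary strip contributes the $\eps^{1/2}$ through Lemmas \ref{lemma ots int O_eps B_eps} and \ref{Lemma 3.6 from Su15}.

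The genuine gap is the point you flag but do not resolve: $\bpsi_\eps=\f_0+\eps\Lambda^\eps S_\eps b(\D)\wt\f_0$ does \emph{not} satisfy the essential boundary condition $(\mu_0^{1/2}\,\cdot\,)_n\vert_{\partial\O}=0$, so your $\s_\eps=\f_\eps-\bpsi_\eps$ is not an admissible test function and the coercivity step does not close as written. Your two suggested fixes are not adequate as stated: the offending trace is that of $\eps\Lambda^\eps S_\eps b(\D)\wt\f_0$, a function whose $H^1(\O)$-norm is $O(1)$ (differentiating $\Lambda^\eps$ costs $\eps^{-1}$), so it cannot simply be ``absorbed.'' The paper's resolution is concrete and quantitative: it defines the correction term $\s_\eps$ not as the error itself but as the solution of an auxiliary variational problem \eqref{popravka} carrying the \emph{inhomogeneous} normal trace $\eps(\mu_0^{1/2}\Lambda^\eps S_\eps b(\D)\wt\f_0)_n\vert_{\partial\O}$ (so that $\f_\eps-\bpsi_\eps+\s_\eps$ is admissible and is shown to be $O(\eps)$ in $H^1$ using the $\R^3$ results), and then lifts that trace by the explicit cut-off function $\bphi_\eps=\eps\,\theta_\eps\Lambda^\eps S_\eps b(\D)\wt\f_0$ supported in $(\partial\O)_\eps$, proving $\|\bphi_\eps\|_{H^1(\R^3)}\le C\eps^{1/2}\|\FF\|_{L_2(\O)}$ (Lemma \ref{lemma_phi}, via Lemma \ref{Lemma 3.6 from Su15} applied to $\Lambda^\eps$ and $(D_j\Lambda)^\eps$ in the boundary strip). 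This lemma is a substantive, separately proved piece of the argument — it is one of the two sources of the $\eps^{1/2}$ loss — and without it, or an equivalent trace-lifting estimate, your scheme does not yield \eqref{res3}.
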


\begin{theorem}\label{th3_O}
Suppose that the assumptions of Theorem~\emph{\ref{th1_O}} are satisfied.
We put
$$
\u_\eps := (\eta^\eps)^{-1} \rot \mu_0^{-1/2}\f_\eps,\quad
\u_0 := (\eta^0)^{-1} \rot \mu_0^{-1/2}\f_0.
$$
 Then for $0 < \eps \le \eps_1$ we have
 \begin{equation}\label{res5}
\begin{split}
\|  \u_\eps - \u_0 - \Sigma^\eps  \rot ( \mu_0^{-1/2} {\f}_0 ) \|_{L_2(\O)} 
&\le \mathcal{C}_3 \eps^{1/2}\| \FF \|_{L_2(\O)}, 
\\
\| \nu^\eps \div (\mu_0^{1/2} \f_\eps) - \underline{\nu} \div (\mu_0^{1/2} \f_0) 
\|_{L_2(\O)} &\le {\mathcal C}_3 \eps^{1/2} \|\FF\|_{L_2(\O)}.
\end{split}
\end{equation}
 The constant $\mathcal{C}_3$ depends only on the problem data \eqref{data}.
\end{theorem}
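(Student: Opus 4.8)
The plan is to deduce Theorem \ref{th3_O} from the $H^1$-approximation already established in Theorem \ref{th2_O}, exactly as the analogous flux estimate (Theorem \ref{th3}) was deduced from Theorem \ref{th2} in the whole-space case. Write $\f_\eps - \bpsi_\eps =: \brho_\eps$, so that $\|\brho_\eps\|_{H^1(\O)} \le \mathcal C_2 \eps^{1/2}\|\FF\|_{L_2(\O)}$ by \eqref{res3}. Apply $(\eta^\eps)^{-1}\rot\mu_0^{-1/2}$ and $\nu^\eps\div\mu_0^{1/2}$ to this identity. The term coming from $\brho_\eps$ is controlled in $L_2(\O)$ by $\|g\|_{L_\infty}$ times $\|\brho_\eps\|_{H^1(\O)}$, hence is $O(\eps^{1/2}\|\FF\|)$. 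The bulk of the work is therefore to show that applying $g^\eps b(\D)$ to $\bpsi_\eps$ produces, up to an $O(\eps^{1/2}\|\FF\|)$ error, the vector with components $\big((\eta^0)^{-1}+\Sigma^\eps\big)\rot\mu_0^{-1/2}\f_0$ and $\underline\nu\,\div\mu_0^{1/2}\f_0$, i.e. $\wt g^\eps b(\D)\f_0$ written out via \eqref{flux2}, \eqref{flux3}, \eqref{v_eq14}.

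The key computation is to differentiate the corrector term in \eqref{corr1_O}. By the product rule, $b(\D)\big(\eps \Lambda^\eps S_\eps b(\D)\wt\f_0\big)$ splits into a term where the derivative falls on $\Lambda^\eps$ — giving $(b(\D)\Lambda)^\eps S_\eps b(\D)\wt\f_0$, the ``resonant'' term with no $\eps$ prefactor — and a term where the derivative falls on $S_\eps b(\D)\wt\f_0$, which carries a factor $\eps$ and, since $S_\eps$ commutes with $\D$ and $\|S_\eps\|\le1$, is bounded in $L_2$ by $C\eps\|b(\D)\wt\f_0\|_{H^1(\R^3)} \le C\eps\|\f_0\|_{H^2(\O)} \le C\eps\|\FF\|_{L_2(\O)}$ using \eqref{phi_0_est} and \eqref{3.5}. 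Combining, $g^\eps b(\D)\bpsi_\eps = g^\eps\big(b(\D)\Lambda + \1\big)^\eps S_\eps b(\D)\wt\f_0 + O(\eps\|\FF\|) = \wt g^\eps S_\eps b(\D)\wt\f_0 + O(\eps\|\FF\|)$ in $L_2(\O)$, by definition \eqref{eff_matrix} of $\wt g$. It then remains to replace $S_\eps b(\D)\wt\f_0$ by $b(\D)\f_0$; this costs $\|\wt g^\eps(S_\eps-I)b(\D)\wt\f_0\|_{L_2(\O)}$, and here I would invoke Lemma \ref{lem1.7} applied to $\wt\f_0 = P_\O\f_0$ in place of $(\L^0+I)^{-1}\FF$ — the proof of that lemma only used Proposition \ref{prop_Seps - I}, Proposition \ref{prop_PSu}, and the $H^2$-bound, all of which are available here through \eqref{phi_0_est} — yielding an $O(\eps\|\FF\|)$ bound. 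Restricting from $\R^3$ to $\O$ only decreases $L_2$ norms, so no boundary strip analysis is needed for this step.

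Assembling the pieces: in $L_2(\O)$,
$$
g^\eps b(\D)\f_\eps = g^\eps b(\D)\bpsi_\eps + g^\eps b(\D)\brho_\eps = \wt g^\eps b(\D)\f_0 + O(\eps^{1/2}\|\FF\|_{L_2(\O)}),
$$
and reading off the two block components via \eqref{flux2}, \eqref{flux3}, \eqref{v_eq14} gives precisely \eqref{res5}. The constant depends only on $\mathcal C_2$, $\|g\|_{L_\infty}$, $C'$ from Lemma \ref{lem1.7}, $\wh c$ from \eqref{phi_0_est}, and the extension-operator norms $C_\O^{(s)}$, hence only on the problem data \eqref{data}. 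The main obstacle, such as it is, lies not in any delicate estimate but in bookkeeping the product-rule decomposition of $b(\D)(\Lambda^\eps S_\eps b(\D)\wt\f_0)$ cleanly — in particular making sure $\Psi$ and $\nabla\rho$ appearing in \eqref{corr1_O} correctly reassemble into $b(\D)\Lambda$ via \eqref{v_eq13a} and \eqref{def}, and that the differentiated factors land where claimed — together with one subtlety worth a remark: the resonant term $(b(\D)\Lambda)^\eps S_\eps b(\D)\wt\f_0$ is not small but is exactly what is needed, which is why the statement of Theorem \ref{th3_O} retains $\Sigma^\eps$ (the non-small part of $\wt g - g^0$ in the first block) while the second block, where $\wt g$ reduces to the constant $\underline\nu$, has no oscillating corrector.
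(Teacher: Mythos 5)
Your proposal is correct and follows essentially the same route as the paper: control $g^\eps b(\D)(\f_\eps-\bpsi_\eps)$ via Theorem \ref{th2_O}, expand $b(\D)$ of the corrector by the product rule so that the resonant term $(b(\D)\Lambda)^\eps S_\eps b(\D)\wt{\f}_0$ reassembles $g^\eps(\1+(b(\D)\Lambda)^\eps)$ into $\wt g^\eps$, and remove $S_\eps$ by the argument of Lemma \ref{lem1.7} (Propositions \ref{prop_Seps - I} and \ref{prop_PSu} together with the $H^2$-bound \eqref{phi_0_est}). The only cosmetic difference is that the paper applies $(S_\eps-I)$ only to the $(b(\D)\Lambda)^\eps$ block rather than to all of $\wt g^\eps$, and it invokes Proposition \ref{prop f^eps S_eps} explicitly to bound the $\eps\,\Lambda^\eps S_\eps D_j b(\D)\wt{\f}_0$ remainder (needed since $\Lambda$ is only in $\wt H^1(\Omega)$, not $L_\infty$) --- a detail your write-up glosses over but which is standard here.
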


Now, we distinguish the special cases. By Proposition \ref{prop_cases} and Remark \ref{rem_eta0}, Theorems \ref{th2_O} and \ref{th3_O}
directly imply the following statement.

\begin{proposition}

\noindent\emph{1)} Suppose that $\eta^0= \overline{\eta}$, i.~e., the columns of the matrix $\eta(\x)$ are divergence free.
Then for  $0< \eps \le \eps_1$ we have
$$
\| \u_\eps - \u_0  \|_{L_2(\O)} \le {\mathcal C}_3 \eps^{1/2} \|\FF\|_{L_2(\O)}.
$$

\noindent\emph{2)} Suppose that $\eta^0= \underline{\eta}$, i.~e., the columns of the matrix  $\eta(\x)^{-1}$ are potential.
Suppose, in addition, that $\nu(\x) = \operatorname{Const}$. Then the corrector \eqref{corr1_OO} is equal to zero and for 
 $0< \eps \le \eps_1$ we have
$$
\| \f_\eps - \f_0 \|_{H^1(\O)} \le \mathcal{C}_2 \eps^{1/2} \|\FF \|_{L_2(\O)}.
$$
\end{proposition}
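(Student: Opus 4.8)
The plan is to read off both statements directly from Theorems~\ref{th2_O} and~\ref{th3_O}, using only the description of the extremal cases of the Voigt--Reuss bracketing given in Proposition~\ref{prop_cases} and Remark~\ref{rem_eta0}; no new estimate is needed, and the whole argument amounts to identifying which of the periodic cell functions $\Phi_j$, $\v_j$, $\rho$ vanish and then deleting the corresponding terms.

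For part~1), the hypothesis $\eta^0=\overline{\eta}$ is equivalent, by Proposition~\ref{prop_cases}~(1), to the columns of $\eta$ being divergence free, and Remark~\ref{rem_eta0}~(1) then yields $\Phi_j\equiv 0$ for $j=1,2,3$, hence $\Sigma(\x)\equiv 0$ and $\Sigma^\eps\equiv 0$. Consequently the extra term $\Sigma^\eps\rot(\mu_0^{-1/2}\f_0)$ in the first inequality of~\eqref{res5} disappears, and that inequality of Theorem~\ref{th3_O} reduces exactly to $\|\u_\eps-\u_0\|_{L_2(\O)}\le\mathcal{C}_3\eps^{1/2}\|\FF\|_{L_2(\O)}$, with the same constant $\mathcal{C}_3$ (depending only on the data~\eqref{data}).

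For part~2), the hypothesis $\eta^0=\underline{\eta}$ is equivalent, by Proposition~\ref{prop_cases}~(2), to potentiality of the columns of $\eta^{-1}$; Remark~\ref{rem_eta0}~(2) then gives $\v_j\equiv 0$ for $j=1,2,3$, i.e. $\Psi(\x)\equiv 0$, and the additional assumption $\nu\equiv\operatorname{Const}$ gives $\underline{\nu}=\nu$, so the right-hand side $1-\underline{\nu}\,\nu^{-1}$ of~\eqref{v_eq13} vanishes, whence $\rho$ is constant and $\nabla\rho\equiv 0$ (equivalently $\v_4\equiv 0$ and $\Lambda\equiv 0$ in~\eqref{v_eq13a}). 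Both pieces of the corrector~\eqref{corr1_OO}, the $\Psi^\eps$-term and the $(\nabla\rho)^\eps$-term, then vanish, so $K_\eps=0$, and by~\eqref{corr2_O} the approximant $\bpsi_\eps$ equals $\f_0$. Plugging $\bpsi_\eps=\f_0$ into~\eqref{res3} of Theorem~\ref{th2_O} gives $\|\f_\eps-\f_0\|_{H^1(\O)}\le\mathcal{C}_2\eps^{1/2}\|\FF\|_{L_2(\O)}$.

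Since this is only a bookkeeping of vanishing cell problems, I expect no real analytic difficulty. The one place to be slightly careful is in part~2): one must verify that \emph{both} constituents of $K_\eps$ drop out --- the curl-type one because $\Psi=0$ (from $\eta^0=\underline{\eta}$) and the divergence-type one because $\nabla\rho=0$ (which genuinely uses the separate hypothesis $\nu=\operatorname{Const}$) --- so that Theorem~\ref{th2_O} really collapses to a corrector-free $H^1$ bound and not merely to one with a smaller corrector.
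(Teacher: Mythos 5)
Your proposal is correct and coincides with the paper's own argument: the paper likewise obtains this Proposition by citing Proposition~\ref{prop_cases} and Remark~\ref{rem_eta0} to conclude $\Sigma\equiv 0$ in case 1) and $\Lambda\equiv 0$ in case 2), and then reading the bounds directly off Theorems~\ref{th3_O} and~\ref{th2_O}. Your extra care in part 2) about both constituents of the corrector vanishing is exactly what Remark~\ref{rem_eta0}~(2) records.
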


\subsection{The first step of the proof. The associated problem in  $\R^3$}
Obviously, we have $\|(L^0+I)^{-1}\|_{L_2(\O) \to L_2(\O)} \le 1$, whence $\|\f_0\|_{L_2(\O)} \le \|\FF\|_{L_2(\O)}$. By  \eqref{phi_0_est} and
 \eqref{3.5}, 
\begin{align}\label{phi_tilde_1}
\| \wt{\f}_0 \|_{L_2(\R^3)}  &\le C_\O^{(0)} \|\FF\|_{L_2(\O)},
\\
\label{phi_tilde_2}
\| \wt{\f}_0 \|_{H^2(\R^3)}  &\le C_\O^{(2)} \wh{c} \|\FF\|_{L_2(\O)}.
\end{align}
We put
\begin{equation}\label{ass_problem_rhs}
\wt{\FF}:= {\L}^0 \wt{\f}_0 + \wt{\f}_0.
\end{equation}
Then $\wt{\FF}\in L_2(\R^3;\C^3)$ and $\wt{\FF}\vert_{ \O} = \FF$. By \eqref{symbol_eff_est}, \eqref{phi_tilde_1}, and \eqref{phi_tilde_2},
\begin{equation}\label{FF_est}
\| \wt{\FF}\|_{L_2(\R^3)} \le c_2 \| \wt{\f}_0\|_{H^2(\R^3)} +  \| \wt{\f}_0\|_{L_2(\R^3)} \le \mathcal{C}_4 \|\FF\|_{L_2(\O)},
\end{equation}
where $\mathcal{C}_4 = c_2 \wh{c} C_\O^{(2)} + C_\O^{(0)}$.
We also need the following inequality which directly follows from  \eqref{ass_problem_rhs} and \eqref{FF_est}:
\begin{equation}\label{3.15a}
\l^0[\wt{\f}_0, \wt{\f}_0]\le \| \wt{\FF}\|^2_{L_2(\R^3)} \le {\mathcal C}^2_4  \|\FF\|^2_{L_2(\O)}.
\end{equation}

Let $\wt{\f}_\eps\in H^1(\R^3;\C^3)$ be the generalized solution of the following equation in $\R^3$:
\begin{equation*}
 \L_\eps \wt{\f}_\eps   + \wt{\f}_\eps = \wt{\FF},
\end{equation*}
i.~e., $\wt{\f}_\eps = (\L_\eps +I)^{-1} \wt{\FF}$. We apply Theorems \ref{th1}, \ref{th2}, and \ref{th3}.
Using also \eqref{FF_est}, we arrive at the estimates 
\begin{align}
\label{ass1}
\| \wt{\f}_\eps - \wt{\f}_0 \|_{L_2(\R^3)} \le C_1 \eps \| \wt{\FF} \|_{L_2(\R^3)} \le C_1 {\mathcal C}_4\eps \| {\FF} \|_{L_2(\O)},
\\
\label{ass2}
\| \wt{\f}_\eps - \wt{\bpsi}_\eps \|_{H^1(\R^3)} \le C_2 \eps \| \wt{\FF} \|_{L_2(\R^3)} 
\le C_2 {\mathcal C}_4 \eps \| {\FF} \|_{L_2(\O)},
\\
\label{ass3}
\| g^\eps b(\D)\wt{\f}_\eps -  \wt{g}^\eps b(\D) \wt{\f}_0 \|_{L_2(\R^3)} \le C_3 \eps \| \wt{\FF} \|_{L_2(\R^3)} 
\le C_3 {\mathcal C}_4 \eps \| {\FF} \|_{L_2(\O)}.
\end{align}

\subsection{The second step of the proof. Introduction of the correction term $\s_\eps$}

Now, we introduce the  ``boundary layer correction term''  $\s_\eps \in H^1(\O;\C^n)$, as the function satisfying the following identity and boundary condition:
\begin{equation}\label{popravka}
\begin{split}
(g^\eps b(\D) \s_\eps,& b(\D) \bzeta)_{L_2(\O)} + (\s_\eps,  \bzeta)_{L_2(\O)} 
= (\wt{g}^\eps b(\D) \f_0, b(\D) \bzeta)_{L_2(\O)} - (\FF, \bzeta)_{L_2(\O)}+ (\f_0,  \bzeta)_{L_2(\O)},
\\
&\qquad\qquad \forall \bzeta \in H^1(\O;\C^3),  \quad (\mu_0^{1/2} \bzeta)_n \vert_{\partial \O} =0,
\\
 (\mu_0^{1/2} \s_\eps)_n &\vert_{\partial \O} = \eps (\mu_0^{1/2} \Lambda^\eps S_\eps b(\D) \wt{\f}_0)_n \vert_{\partial \O}.
\end{split}
\end{equation}

Let us show that taking $\s_\eps$  into account allows us to obtain approximation of the solution $\f_\eps$  in the $H^1$-norm with an error of sharp order $O(\eps)$.

\begin{theorem}
For $\eps>0$ we have
\begin{equation}\label{popravka2}
\|  \f_\eps - {\bpsi}_\eps + \s_\eps \|_{H^1(\O)} \le \mathcal{C}_5 \eps \| \FF \|_{L_2(\O)}.
\end{equation}
The constant $\mathcal{C}_5$ depends only on the problem data \eqref{data}.
\end{theorem}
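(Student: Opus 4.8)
The plan is to derive \eqref{popravka2} by comparing the defining integral identity for $\f_\eps$ with an identity satisfied by the combination $\bpsi_\eps - \s_\eps$, and then using coercivity of the form $l_\eps$ together with the already-established $\R^3$-estimates \eqref{ass1}--\eqref{ass3}. First I would write down the weak formulation for $\f_\eps$, namely $l_\eps[\f_\eps,\bzeta] + (\f_\eps,\bzeta)_{L_2(\O)} = (\FF,\bzeta)_{L_2(\O)}$ for all admissible $\bzeta$, and then compute $l_\eps[\bpsi_\eps - \s_\eps, \bzeta] + (\bpsi_\eps - \s_\eps,\bzeta)_{L_2(\O)}$. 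Using the definition \eqref{corr2_O} of $\bpsi_\eps$ as $\f_0 + \eps\Lambda^\eps S_\eps b(\D)\wt\f_0$ and the identity \eqref{popravka} defining $\s_\eps$, the term $(\wt g^\eps b(\D)\f_0, b(\D)\bzeta)_{L_2(\O)} - (\FF,\bzeta)_{L_2(\O)} + (\f_0,\bzeta)_{L_2(\O)}$ coming from $-\s_\eps$ should cancel against most of the contribution from $\bpsi_\eps$, leaving a discrepancy governed by how well $g^\eps b(\D)(\f_0 + \eps\Lambda^\eps S_\eps b(\D)\wt\f_0)$ approximates the "corrected flux" $\wt g^\eps b(\D)\f_0$. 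The crucial point is that this same algebra holds verbatim in $\R^3$ for $\wt\f_\eps$ and $\wt\bpsi_\eps$, so the difference $\f_\eps - (\bpsi_\eps - \s_\eps)$ will be expressed through quantities that are exactly controlled by \eqref{ass2} and \eqref{ass3}.

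Concretely, I expect the identity to reduce to something of the form
\begin{equation*}
l_\eps[\f_\eps - \bpsi_\eps + \s_\eps, \bzeta] + (\f_\eps - \bpsi_\eps + \s_\eps, \bzeta)_{L_2(\O)} = \mathcal{J}_\eps(\bzeta),
\end{equation*}
where $\mathcal{J}_\eps(\bzeta)$ is a sum of pairings involving $g^\eps b(\D)\wt\f_\eps - \wt g^\eps b(\D)\wt\f_0$ restricted to $\O$, the corrector terms $\eps\Lambda^\eps S_\eps b(\D)\wt\f_0$, and lower-order $L_2$ remainders, each paired against $b(\D)\bzeta$ or $\bzeta$. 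By \eqref{ass2}, \eqref{ass3}, \eqref{FF_est}, and the mapping properties of $S_\eps$ and $[\Lambda^\eps]$ (Propositions \ref{prop f^eps S_eps}, \ref{prop_Seps - I}), one bounds $|\mathcal{J}_\eps(\bzeta)| \le \mathcal{C}\eps \|\FF\|_{L_2(\O)} \|\bzeta\|_{H^1(\O)}$. Then taking $\bzeta = \f_\eps - \bpsi_\eps + \s_\eps$ (which is admissible: its normal trace vanishes by construction, since the boundary condition in \eqref{popravka} for $\s_\eps$ is precisely designed to match $(\mu_0^{1/2}\bpsi_\eps)_n$ against $(\mu_0^{1/2}\f_\eps)_n = 0$ on $\partial\O$) and invoking the coercivity estimate \eqref{form_l_eps_est_O} yields ${\mathfrak c}_1 \|\f_\eps - \bpsi_\eps + \s_\eps\|_{H^1(\O)}^2 \le l_\eps[\cdot,\cdot] + \|\cdot\|_{L_2(\O)}^2 = \mathcal{J}_\eps(\cdot) \le \mathcal{C}\eps\|\FF\|_{L_2(\O)}\|\f_\eps - \bpsi_\eps + \s_\eps\|_{H^1(\O)}$, and dividing gives \eqref{popravka2}.

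The main obstacle I anticipate is the bookkeeping around the boundary terms and the role of the extension operator $P_\O$. The correction term $\s_\eps$ is defined with a \emph{nonzero} normal trace on $\partial\O$, matched to the corrector's trace, so $\bpsi_\eps - \s_\eps$ has the right ($=0$) normal trace, and I must verify carefully that the test function $\bzeta = \f_\eps - \bpsi_\eps + \s_\eps$ indeed lies in the form domain. A second delicate point is that $\bpsi_\eps = \wt\bpsi_\eps|_\O$ involves $\wt\f_0 = P_\O\f_0$, which agrees with $\f_0$ on $\O$ but whose derivatives enter through $b(\D)\wt\f_0$; inside $\O$ one has $b(\D)\wt\f_0 = b(\D)\f_0$, so the restriction to $\O$ of the $\R^3$-identity for $\wt\f_\eps$ must be compared against the $\O$-identity for $\f_\eps$, and the difference $\wt\f_\eps|_\O - \f_\eps$ is \emph{not} small in $H^1(\O)$ — this is exactly why the boundary corrector $\s_\eps$ is needed, and the whole point is that it absorbs that discrepancy. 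So the real content is checking that, after inserting the definitions, every surviving term is either an $\R^3$-quantity bounded by \eqref{ass2}--\eqref{ass3} or a genuinely $O(\eps)$ corrector remainder, with nothing of order $O(\sqrt\eps)$ left over; I would organize this as a single chain of equalities tracking each term, then apply the cited estimates termwise.
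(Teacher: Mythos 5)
Your proposal is correct and follows essentially the same route as the paper: you derive the identity $l_\eps[\f_\eps-\bpsi_\eps+\s_\eps,\bzeta]+(\f_\eps-\bpsi_\eps+\s_\eps,\bzeta)_{L_2(\O)}=(\wt g^\eps b(\D)\f_0-g^\eps b(\D)\bpsi_\eps,b(\D)\bzeta)_{L_2(\O)}+(\f_0-\bpsi_\eps,\bzeta)_{L_2(\O)}$, split the right-hand side via $\wt\f_\eps$ into pieces controlled by \eqref{ass1}--\eqref{ass3}, check admissibility of the test function through the matched normal traces, and close with $\bzeta=\f_\eps-\bpsi_\eps+\s_\eps$ and the coercivity \eqref{form_l_eps_est_O}. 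This is exactly the paper's argument.
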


\begin{proof}
Denote $\VV_\eps:= \f_\eps - {\bpsi}_\eps + \s_\eps$. Then from \eqref{identity_est}, \eqref{corr2_O}, \eqref{popravka}, and the boundary 
conditions $(\mu_0^{1/2} \f_\eps)_n\vert_{\partial \O}=0$, $(\mu_0^{1/2} \f_0)_n\vert_{\partial \O} =0$ it follows that 
 $\VV_\eps \in H^1(\O;\C^3)$, $(\mu_0^{1/2} \VV_\eps)_n \vert_{\partial \O} =0$, and
\begin{equation}\label{popravka3}
\begin{split}
l_\eps[\VV_\eps, \bzeta]& + (\VV_\eps, \bzeta)_{L_2(\O)}
=
(\wt{g}^\eps b(\D) \f_0 - g^\eps b(\D) {\bpsi}_\eps, b(\D) \bzeta)_{L_2(\O)} 
+ (\f_0 - {\bpsi}_\eps,  \bzeta)_{L_2(\O)}, 
\\
& \forall \bzeta \in H^1(\O;\C^3),  \quad (\mu_0^{1/2} \bzeta)_n \vert_{\partial \O} =0.
\end{split}
\end{equation}
The first term on the right can be written as
$$
(\wt{g}^\eps b(\D) \wt{\f}_0 - g^\eps b(\D) \wt{\f}_\eps, b(\D) \bzeta)_{L_2(\O)} + ({g}^\eps b(\D) (\wt{\f}_\eps - \wt{\bpsi}_\eps), b(\D) \bzeta)_{L_2(\O)}.
$$
By \eqref{form_l_eps_est}, \eqref{ass2}, and \eqref{ass3}, it does not exceed
$$
\begin{aligned}
&\|\wt{g}^\eps b(\D) \wt{\f}_0 - g^\eps b(\D) \wt{\f}_\eps\|_{L_2(\R^3)} \| b(\D) \bzeta\|_{L_2(\O)} 
\\
&
+
\left( \l_\eps[\wt{\f}_\eps - \wt{\bpsi}_\eps,\wt{\f}_\eps - \wt{\bpsi}_\eps]\right)^{1/2} \left( l_\eps[\bzeta, \bzeta]\right)^{1/2}
\le {\mathcal C}_5' \eps \|\FF\|_{L_2(\O)}\left( l_\eps[\bzeta, \bzeta]\right)^{1/2},
\end{aligned}
$$
where ${\mathcal C}_5' = {\mathcal C}_4 \left( \|g^{-1}\|_{L_\infty}^{1/2} C_3 + \sqrt{c_2} C_2\right)$. The second term in the right-hand side of 
\eqref{popravka3} can be written as 
$(\wt{\f}_0 - \wt{\f}_\eps,  \bzeta)_{L_2(\O)}+(\wt{\f}_\eps - \wt{\bpsi}_\eps,  \bzeta)_{L_2(\O)}$.
By \eqref{ass1} and \eqref{ass2}, 
it does not exceed  ${\mathcal C}_5'' \eps \|\FF\|_{L_2(\O)} \|\bzeta\|_{L_2(\O)}$,  where
\hbox{${\mathcal C}_5'' = {\mathcal C}_4 (C_1 + C_2)$}. As a result, we see that the right-hand side of identity \eqref{popravka3} 
is majorated by  
$\check{\mathcal C}_5 \eps \|\FF\|_{L_2(\O)} \left( l_\eps[\bzeta, \bzeta]+ \|\bzeta\|^2_{L_2(\O)}\right)^{1/2}$,
where $\check{\mathcal C}_5^2 = ({\mathcal C}_5')^2 + ({\mathcal C}_5'')^2$. 

Substituting $\bzeta = \VV_\eps$ in \eqref{popravka3} and using the obtained estimate, we arrive at the inequality  
$$
\left( l_\eps[\VV_\eps, \VV_\eps]+ \|\VV_\eps\|^2_{L_2(\O)}\right)^{1/2} \le \check{\mathcal C}_5 \eps \| \FF \|_{L_2(\O)}.
$$
Together with the lower estimate \eqref{form_l_eps_est_O}, this implies the required inequality  
\eqref{popravka2} with the constant ${\mathcal C}_5 = \check{\mathcal C}_5 {\mathfrak c}_1^{-1/2}$.
\end{proof}

\noindent\textbf{Conclusions.}
1) From \eqref{popravka2} it follows that  
\begin{equation}\label{vyvod1}
\| \f_\eps - \bpsi_\eps \|_{H^1(\O)} \le 
 \mathcal{C}_5 \eps \| \FF \|_{L_2(\O)} + \| \s_\eps \|_{H^1(\O)}.
\end{equation}
So, for the proof of  Theorem \ref{th2_O}, we need to prove a suitable estimate for the norm $\| \s_\eps \|_{H^1(\O)}$.

2) From \eqref{corr2_O} and \eqref{popravka2} it follows that 
\begin{equation}\label{vyvod2}
\| \f_\eps - \f_0 \|_{L_2(\O)} \le 
\mathcal{C}_5 \eps \| \FF \|_{L_2(\O)} + \eps \| \Lambda^\eps S_\eps b(\D) \wt{\f}_0 \|_{L_2(\R^3)}+\| \s_\eps \|_{L_2(\O)}.
\end{equation}
By Proposition \ref{prop f^eps S_eps} and estimates \eqref{Lambda_est}, \eqref{3.15a}, 
\begin{equation}\label{3.25a}
\| \Lambda^\eps S_\eps b(\D) \wt{\f}_0 \|_{L_2(\R^3)} \le {\mathfrak C}_\Lambda {\mathcal C}_4 \| g^{-1}\|_{L_\infty}^{1/2} \| \FF \|_{L_2(\O)}.
\end{equation}
Together with \eqref{vyvod2}, this yields 
\begin{equation}\label{vyvod3}
\| \f_\eps - \f_0 \|_{L_2(\O)} \le 
\mathcal{C}_6 \eps \| \FF \|_{L_2(\O)} +\| \s_\eps \|_{L_2(\O)},
\end{equation}
where $\mathcal{C}_6 = \mathcal{C}_5 + {\mathfrak C}_\Lambda {\mathcal C}_4 \| g^{-1}\|_{L_\infty}^{1/2}$.
Hence, to prove Theorem \ref{th1_O}, we have to estimate the norm $\| \s_\eps \|_{L_2(\O)}$ in appropriate way.

\section{Estimation of the correction term. Proof of Theorems \ref{th1_O}--\ref{th3_O} \label{Sec4}}

First, we estimate the  $H^1$-norm of the correction term $\s_\eps$ and prove Theorem \ref{th2_O} and also Theorem \ref{th3_O}. 
Next, using the already proved Theorem \ref{th2_O} and the duality arguments, 
we estimate the $L_2$-norm of the correction term $\s_\eps$ and prove Theorem~\ref{th1_O}.

\subsection{Estimate for the correction term in  $H^1(\O)$. Proof of Theorem \ref{th2_O}}

We rewrite identity \eqref{popravka}, using \eqref{phi_0_identity}:
\begin{equation}\label{popravka_identity}
\begin{split}
(g^\eps b(\D) \s_\eps, b(\D) \bzeta)_{L_2(\O)}& + (\s_\eps,  \bzeta)_{L_2(\O)} 
= ((\wt{g}^\eps - g^0) b(\D) \f_0, b(\D) \bzeta)_{L_2(\O)} =:\mathcal{I}_\eps[\bzeta], 
\\
& \forall \bzeta \in H^1(\O;\C^3),  \quad (\mu_0^{1/2} \bzeta)_n \vert_{\partial \O} =0.
\end{split}
\end{equation}
According to  \eqref{def}, \eqref{v_eq14}, and \eqref{v_eq15}, 
\begin{equation}\label{I_eps}
\mathcal{I}_\eps[\bzeta] = (\Sigma^\eps \rot \mu_0^{-1/2} \f_0, \rot \mu_0^{-1/2} \bzeta)_{L_2(\O)}.
\end{equation}

\begin{lemma}
For $0< \eps \le \eps_0$ we have 
\begin{equation}\label{I_eps_estimate}
|\mathcal{I}_\eps[\bzeta]| \le \mathcal{C}_7 \eps^{1/2} \| \FF \|_{L_2(\O)} (l_\eps[\bzeta, \bzeta])^{1/2},
\quad \bzeta \in H^1(\O;\C^3),  \quad (\mu_0^{1/2} \bzeta)_n \vert_{\partial \O} =0.
\end{equation}
The constant $\mathcal{C}_7$ depends only on the problem data \eqref{data}. 
\end{lemma}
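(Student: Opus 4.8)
Write $\mathcal{I}_\eps[\bzeta]$ in the form \eqref{I_eps}. The plan is to first replace $\rot(\mu_0^{-1/2}\f_0)$ by its Steklov smoothing, and then to exploit that the columns of $\Sigma(\x)$ are the gradients $\nabla\Phi_j(\x)$ of the \emph{bounded} functions $\Phi_j$ (Remark~\ref{LaUr}). Put $\wt{\f}_0:=P_\O\f_0$ and $\wt{\w}_0:=\rot(\mu_0^{-1/2}\wt{\f}_0)$, so that $\wt{\w}_0\in H^1(\R^3)$, $\wt{\w}_0\vert_\O=\rot(\mu_0^{-1/2}\f_0)$, and $\|\wt{\w}_0\|_{H^1(\R^3)}\le C\|\FF\|_{L_2(\O)}$ by \eqref{phi_tilde_2}. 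Decompose
\[
\mathcal{I}_\eps[\bzeta]=\big(\Sigma^\eps S_\eps\wt{\w}_0,\rot(\mu_0^{-1/2}\bzeta)\big)_{L_2(\O)}
+\big(\Sigma^\eps (I-S_\eps)\wt{\w}_0,\rot(\mu_0^{-1/2}\bzeta)\big)_{L_2(\O)}.
\]
For the second term, Proposition~\ref{prop_PSu} (applied to the components of $(I-S_\eps)\wt{\w}_0$, whose $L_2$-norm is $O(\eps)\|\wt{\w}_0\|_{H^1}$ by Proposition~\ref{prop_Seps - I} and \eqref{S_eps <= 1}) gives $\|\Sigma^\eps(I-S_\eps)\wt{\w}_0\|_{L_2(\R^3)}\le C\eps\|\FF\|_{L_2(\O)}$; since $l_\eps[\bzeta,\bzeta]\ge\|\eta\|_{L_\infty}^{-1}\|\rot(\mu_0^{-1/2}\bzeta)\|_{L_2(\O)}^2$, this term is $\le C\eps\|\FF\|_{L_2(\O)}(l_\eps[\bzeta,\bzeta])^{1/2}$, which is better than required.

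It remains to treat the principal term $\mathcal{I}'_\eps[\bzeta]:=(\Sigma^\eps S_\eps\wt{\w}_0,\rot(\mu_0^{-1/2}\bzeta))_{L_2(\O)}$. Using $(\nabla\Phi_j)^\eps=\eps\,\nabla(\Phi_j^\eps)$ and the product rule, one moves the gradient onto a product:
\[
\Sigma^\eps S_\eps\wt{\w}_0=\eps\sum_{j=1}^3 (S_\eps\wt{\w}_0)_j\,\nabla(\Phi_j^\eps)=\nabla\Theta_\eps-\eps\sum_{j=1}^3\Phi_j^\eps\,\nabla(S_\eps\wt{\w}_0)_j,\qquad \Theta_\eps:=\eps\sum_{j=1}^3\Phi_j^\eps\,(S_\eps\wt{\w}_0)_j.
\]
Since $\Phi_j\in L_\infty$ and $\|S_\eps\wt{\w}_0\|_{H^1(\R^3)}\le C\|\FF\|_{L_2(\O)}$, the last sum has $L_2(\O)$-norm $O(\eps)\|\FF\|_{L_2(\O)}$, so its contribution to $\mathcal{I}'_\eps[\bzeta]$ is $O(\eps)\|\FF\|_{L_2(\O)}(l_\eps[\bzeta,\bzeta])^{1/2}$; also $\|\Theta_\eps\|_{L_2(\O)}\le C\eps\|\FF\|_{L_2(\O)}$ and, using Proposition~\ref{prop f^eps S_eps} to control $(\nabla\Phi_j)^\eps(S_\eps\wt{\w}_0)_j$ in $L_2$, $\|\Theta_\eps\|_{H^1(\O)}\le C\|\FF\|_{L_2(\O)}$. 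Thus everything reduces to $(\nabla\Theta_\eps,\mathbf{g})_{L_2(\O)}$ with $\mathbf{g}:=\rot(\mu_0^{-1/2}\bzeta)$, which is divergence-free in $\O$. Fix a cut-off $\chi_\eps\in C^\infty(\R^3)$ equal to $1$ near $\partial\O$, supported in $(\partial\O)_\eps$, with $|\nabla\chi_\eps|\le C\eps^{-1}$. Then $(1-\chi_\eps)\Theta_\eps\in H^1_0(\O)$, hence $(\nabla((1-\chi_\eps)\Theta_\eps),\mathbf{g})_{L_2(\O)}=0$ (because $\div\mathbf{g}=0$), and therefore
\[
(\nabla\Theta_\eps,\mathbf{g})_{L_2(\O)}=(\nabla(\chi_\eps\Theta_\eps),\mathbf{g})_{L_2(B_\eps)},\qquad B_\eps=\O\cap(\partial\O)_\eps.
\]

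This identity is where the loss of half a power of $\eps$ — and the main difficulty — sits. From it,
\[
|(\nabla\Theta_\eps,\mathbf{g})_{L_2(\O)}|\le\big(C\eps^{-1}\|\Theta_\eps\|_{L_2(B_\eps)}+\|\nabla\Theta_\eps\|_{L_2(B_\eps)}\big)\|\mathbf{g}\|_{L_2(\O)},
\]
so the claim follows once $\|\Theta_\eps\|_{L_2(B_\eps)}\le C\eps^{3/2}\|\FF\|_{L_2(\O)}$ and $\|\nabla\Theta_\eps\|_{L_2(B_\eps)}\le C\eps^{1/2}\|\FF\|_{L_2(\O)}$, since $\|\mathbf{g}\|_{L_2(\O)}\le C(l_\eps[\bzeta,\bzeta])^{1/2}$. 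Both bounds come from Lemma~\ref{Lemma 3.6 from Su15} applied to the products $\Phi_j^\eps\,(S_\eps\wt{\w}_0)_j$ and $(\partial_k\Phi_j)^\eps\,(S_\eps\wt{\w}_0)_j$ (with $h=\Phi_j$, resp. $h=\partial_k\Phi_j$, both in $L_2(\Omega)$, and $\mathbf{u}=\wt{\w}_0\in H^1(\R^3)$): each such product has $L_2((\partial\O)_\eps)$-norm at most $C\eps^{1/2}\|\wt{\w}_0\|_{H^1(\R^3)}^{1/2}\|\wt{\w}_0\|_{L_2(\R^3)}^{1/2}\le C\eps^{1/2}\|\FF\|_{L_2(\O)}$; the extra explicit factor $\eps$ in $\Theta_\eps$ then produces the exponent $\eps^{3/2}$ for $\Theta_\eps$ on $B_\eps$, whereas for $\nabla\Theta_\eps=\Sigma^\eps S_\eps\wt{\w}_0-\eps\sum_j\Phi_j^\eps\nabla(S_\eps\wt{\w}_0)_j$ one combines the $\eps^{1/2}$-bound for $\Sigma^\eps S_\eps\wt{\w}_0$ on $(\partial\O)_\eps$ with the $O(\eps)$ bound for the remaining sum. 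The hard part is precisely this boundary-layer estimate: $\Sigma^\eps$ is only $L_2$ on the cell, so it has no trace on $\partial\O$ and the boundary term generated by the integration by parts must be realized as a volume integral over the shrinking strip $B_\eps$; extracting the sharp exponent $\eps^{1/2}$ there (rather than $O(1)$ or $\eps^{1/4}$, which cruder trace or interpolation arguments would give) relies on applying Lemma~\ref{Lemma 3.6 from Su15} to the $L_\infty$-bounded oscillating factors $\Phi_j^\eps$ and $(\partial_k\Phi_j)^\eps$ separately, keeping the explicit factor $\eps$ in $\Theta_\eps$ intact instead of absorbing it into a single Sobolev norm.
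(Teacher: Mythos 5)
Your proof is correct and follows the same essential strategy as the paper's: write $(\nabla\Phi_j)^\eps=\eps\nabla(\Phi_j^\eps)$, move the gradient onto the product by the Leibniz rule, annihilate the interior contribution against the divergence-free field $\rot(\mu_0^{-1/2}\bzeta)$ by means of a cut-off supported in the $\eps$-neighborhood of $\partial\O$, and extract the factor $\eps^{1/2}$ from boundary-strip estimates. The one real difference is technical: you first insert the Steklov smoothing $S_\eps$ (paying an $O(\eps)$ price controlled by Propositions \ref{prop_Seps - I} and \ref{prop_PSu}), which lets you run all the strip estimates through Lemma \ref{Lemma 3.6 from Su15}; the paper works with the unsmoothed $\rot(\mu_0^{-1/2}\f_0)$ and instead combines Proposition \ref{prop_PSu} (to handle the product with $(\nabla\Phi_j)^\eps$) with the elementary strip estimate of Lemma \ref{lemma ots int O_eps B_eps}. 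Both routes yield the statement with constants depending only on the data \eqref{data}; yours is marginally more uniform in that a single lemma does all the strip work, at the cost of the extra smoothing step. Two small points to fix: the sign in your displayed formula for $\nabla\Theta_\eps$ should be $\nabla\Theta_\eps=\Sigma^\eps S_\eps\wt{\w}_0+\eps\sum_{j}\Phi_j^\eps\nabla(S_\eps\wt{\w}_0)_j$ (immaterial for the estimate), and Lemma \ref{Lemma 3.6 from Su15} is stated only for $0<\eps\le\eps_1$, so as written your argument proves the claim on the slightly smaller range $0<\eps\le\eps_1$ rather than $0<\eps\le\eps_0$; this is harmless for every application in the paper, and is trivially repaired for $\eps_1<\eps\le\eps_0$ by enlarging the constant, since Proposition \ref{prop_PSu} already gives $\|\Sigma^\eps\rot(\mu_0^{-1/2}\f_0)\|_{L_2(\O)}\le C\|\FF\|_{L_2(\O)}$.
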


\begin{proof}
Recall that $\Sigma(\x)$ is the matrix with the columns $\nabla \Phi_j(\x)$, $j=1,2,3$.
Hence, the matrix $\Sigma^\eps(\x)$ has the columns  $(\nabla \Phi_j)^\eps(\x) = \eps \nabla \Phi_j^\eps(\x)$, $j=1,2,3$.
The components of the vector-valued function $\rot \mu_0^{-1/2} \f_0$ are denoted by 
$[\rot \mu_0^{-1/2} \f_0]_j$, $j=1,2,3$. Then  
\begin{align*}
\Sigma^\eps \rot \mu_0^{-1/2} \f_0& = \eps \sum_{j=1}^3 (\nabla \Phi^\eps_j) [\rot \mu_0^{-1/2} \f_0]_j 
\\
&=
\eps \sum_{j=1}^3 \left( \nabla\left(\Phi^\eps_j [\rot \mu_0^{-1/2} \f_0]_j\right) - \Phi^\eps_j \nabla [\rot \mu_0^{-1/2} \f_0]_j \right).
\end{align*}
Together with \eqref{I_eps}, this implies  
\begin{align}\label{I_eps=sum}
\mathcal{I}_\eps[\bzeta] &= \mathcal{I}^{(1)}_\eps[\bzeta] + \mathcal{I}^{(2)}_\eps[\bzeta],
\\
\label{I1_eps}
\mathcal{I}^{(1)}_\eps[\bzeta] &:= \eps \sum_{j=1}^3 \left(\nabla\left(\Phi^\eps_j [\rot \mu_0^{-1/2} \f_0]_j\right), \rot \mu_0^{-1/2} \bzeta\right)_{L_2(\O)},
\\
\label{I2_eps}
\mathcal{I}^{(2)}_\eps[\bzeta] &:= - \eps \sum_{j=1}^3 (\Phi^\eps_j \nabla [\rot \mu_0^{-1/2} \f_0]_j, \rot \mu_0^{-1/2} \bzeta)_{L_2(\O)}.
\end{align}
By \eqref{phi_0_est} and the boundedness of $\Phi_j$ (see Remark \ref{LaUr}), the term \eqref{I2_eps} admits the estimate 
\begin{equation}\label{I2_eps_est}
|\mathcal{I}^{(2)}_\eps[\bzeta]| \le \mathcal{C}_7' \eps \| \FF \|_{L_2(\O)} (l_\eps[\bzeta, \bzeta])^{1/2},
\end{equation}
where the constant ${\mathcal C}_7'$ depends only on the problem data \eqref{data}. We have taken into account the obvious inequality
\begin{equation}\label{4.7a}
\| \rot \mu_0^{-1/2} \bzeta\|_{L_2(\O)} \le \|\eta\|^{1/2}_{L_\infty} (l_\eps[\bzeta, \bzeta])^{1/2}.
\end{equation}

Let $0< \eps \le \eps_0$. We fix a cut-off function $\theta_\eps(\x)$ in $\R^3$ such that
\begin{equation}\label{srezka}
\begin{split}
&\theta_\eps \in C_0^\infty(\R^3); \quad \operatorname{supp}\, \theta_\eps \subset (\partial \O)_\eps; \quad 
0\le \theta_\eps(\x) \le 1;
\\
&\theta_\eps(\x) =1 \ \text{for} \ \x \in \partial \O;\quad \eps |\nabla \theta_\eps| \le \kappa = \text{Const}.
\end{split}
\end{equation}
We put
\begin{equation}\label{4.8a}
{\mathbf f}_{j,\eps} := \eps \nabla\left( \theta_\eps \Phi^\eps_j [\rot \mu_0^{-1/2} \f_0]_j\right), \quad j=1,2,3,
\end{equation}
and represent the term \eqref{I1_eps} as 
\begin{equation}
\label{I1_eps=}
\mathcal{I}^{(1)}_\eps[\bzeta] = \sum_{j=1}^3 ({\mathbf f}_{j,\eps}, \rot \mu_0^{-1/2} \bzeta)_{L_2(\O)}. 
\end{equation}
Here we have used the identity
$$
\left(\nabla\left( (1- \theta_\eps) \Phi^\eps_j [\rot \mu_0^{-1/2} \f_0]_j\right), \rot \mu_0^{-1/2} \bzeta\right)_{L_2(\O)} =0,
$$
which can be checked by integration by parts and using the identity $\div \rot =0$ (when checking, we can assume that  $\bzeta \in H^2(\O;\C^3)$). 

It remains to estimate the term \eqref{I1_eps=}. By \eqref{srezka}, \eqref{4.8a}, and Remark~\ref{LaUr}, we have
\begin{equation}\label{I1_eps_2}
\begin{split}
& \| {\mathbf f}_{j,\eps} \|_{L_2(\O)}
\le \kappa \| \Phi_j \|_{L_\infty} \| [\rot \mu_0^{-1/2} \f_0]_j \|_{L_2(B_\eps)}
\\
&+ \| \theta_\eps (\nabla \Phi_j)^\eps [\rot \mu_0^{-1/2} \f_0]_j \|_{L_2(\O)}
+ \eps \| \Phi_j \|_{L_\infty}  \| \nabla [\rot \mu_0^{-1/2} \f_0]_j \|_{L_2(\O)}.
\end{split}
\end{equation}
The first summand in \eqref{I1_eps_2} does not exceed  $C \eps^{1/2} \|\FF\|_{L_2(\O)}$, due to Lemma~\ref{lemma ots int O_eps B_eps} 
and estimate~\eqref{phi_0_est}.
 By \eqref{phi_0_est}, the third summand in \eqref{I1_eps_2} is majorated by $C \eps \|\FF\|_{L_2\!(\O)}$. 
To estimate the second term in  \eqref{I1_eps_2}, we apply Proposition \ref{prop_PSu} and \eqref{srezka}:
\begin{align*}
&\| \theta_\eps (\nabla \Phi_j)^\eps [\rot \mu_0^{-1/2} \f_0]_j \|_{L_2(\O)}
\le \| \theta_\eps (\nabla \Phi_j)^\eps [\rot \mu_0^{-1/2} \wt{\f}_0]_j \|_{L_2(\R^3)}
\\
&\le \sqrt{\beta_1} \|  [\rot \mu_0^{-1/2} \wt{\f}_0]_j \|_{L_2((\partial \O)_\eps)}
\!+\! \sqrt{\beta_2} \eps \|\Phi_j\|_{L_\infty} \| \nabla \!\Big( \theta_\eps [\rot \mu_0^{-1/2} \wt{\f}_0]_j\Big) \|_{L_2(\R^3)}
\\
&\le  \Big( \sqrt{\beta_1}\! +\! \sqrt{\beta_2} \|\Phi_j\|_{L_\infty} \kappa \Big)\|  [\rot \mu_0^{-1/2} \wt{\f}_0]_j \|_{L_2((\partial \O)_\eps)}
+ \sqrt{\beta_2} \eps \|\Phi_j\|_{L_\infty} \| \nabla [\rot \mu_0^{-1/2} \wt{\f}_0]_j \|_{L_2(\R^3)}.
\end{align*}
By Lemma \ref{lemma ots int O_eps B_eps} and estimate \eqref{phi_tilde_2}, the first term on the right does not exceed $C \eps^{1/2} \|\FF\|_{L_2(\O)}$.
From \eqref{phi_tilde_2} it follows that the second term is estimated by $C \eps \|\FF\|_{L_2(\O)}$.
We arrive at 
\begin{equation}\label{fj_eps_est}
\sum_{j=1}^3 \| {\mathbf f}_{j,\eps} \|_{L_2(\O)}  \le \mathcal{C}_7'' \eps^{1/2} \| \FF \|_{L_2(\O)},
\end{equation}
where the constant ${\mathcal C}_7''$ depends only on the problem data \eqref{data}. 
Hence, by \eqref{4.7a}, the term \eqref{I1_eps=} satisfies 
\begin{equation}\label{I1_eps_est}
|\mathcal{I}^{(1)}_\eps[\bzeta]| \le \mathcal{C}_7'' \|\eta\|_{L_\infty}^{1/2} \eps^{1/2} \| \FF \|_{L_2(\O)} (l_\eps[\bzeta, \bzeta])^{1/2}.
\end{equation}
Now, relations \eqref{I_eps=sum}, \eqref{I2_eps_est}, and \eqref{I1_eps_est} imply the required inequality \eqref{I_eps_estimate}.
\end{proof}

We introduce the following function in  $\R^3$: 
\begin{equation}
\label{phi_eps}
\bphi_\eps(\x):= \eps \theta_\eps(\x) \Lambda^\eps(\x) \left( S_\eps b(\D) \wt{\f}_0 \right)(\x).
\end{equation}

\begin{lemma}\label{lem4.2}
Let $\bphi_\eps$ be defined by \eqref{phi_eps}.
For $0< \eps \le \eps_0$ we have
\begin{equation}\label{s_eps_H1}
\| \s_\eps \|_{H^1(\O)} \le {\mathcal C}_8 \Big( \eps^{1/2} \| \FF \|_{L_2(\O)} + \| \bphi_\eps\|_{H^1(\R^3)} \Big),
\end{equation}
where the constant ${\mathcal C}_8$ depends only on the problem data \eqref{data}.
\end{lemma}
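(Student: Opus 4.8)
The plan is to compare $\s_\eps$ with the explicit function $\bphi_\eps$ from \eqref{phi_eps} and to exploit that $\s_\eps-\bphi_\eps$ is an admissible test function in the defining identity \eqref{popravka_identity}. Since $\theta_\eps\equiv 1$ on $\partial\O$ by \eqref{srezka}, the trace of $\mu_0^{1/2}\bphi_\eps$ on $\partial\O$ equals $\eps\,\mu_0^{1/2}\Lambda^\eps S_\eps b(\D)\wt\f_0\big|_{\partial\O}$, which by the boundary condition in \eqref{popravka} coincides with the trace of $\mu_0^{1/2}\s_\eps$; taking normal components we get $(\mu_0^{1/2}(\s_\eps-\bphi_\eps))_n\big|_{\partial\O}=0$. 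One first checks that $\bphi_\eps\in H^1(\R^3;\C^3)$ — this follows from $\Lambda\in\wt H^1(\Omega)$, the mapping properties of $S_\eps$, and $\theta_\eps\in C_0^\infty$ — so that $\s_\eps-\bphi_\eps\in H^1(\O;\C^3)$ is indeed a legitimate test function (if desired, one argues first for smooth $\bzeta$ and passes to the limit).

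First I would substitute $\bzeta=\s_\eps-\bphi_\eps$ into \eqref{popravka_identity}, split $\s_\eps=(\s_\eps-\bphi_\eps)+\bphi_\eps$, and move the contributions of $\bphi_\eps$ to the right-hand side, arriving at
\begin{multline*}
l_\eps[\s_\eps-\bphi_\eps,\s_\eps-\bphi_\eps]+\|\s_\eps-\bphi_\eps\|^2_{L_2(\O)}\\
=\mathcal{I}_\eps[\s_\eps-\bphi_\eps]-(g^\eps b(\D)\bphi_\eps,b(\D)(\s_\eps-\bphi_\eps))_{L_2(\O)}-(\bphi_\eps,\s_\eps-\bphi_\eps)_{L_2(\O)}.
\end{multline*}
The three terms on the right are then estimated separately. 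For the first I invoke the already proved inequality \eqref{I_eps_estimate}, which gives $|\mathcal{I}_\eps[\s_\eps-\bphi_\eps]|\le\mathcal C_7\eps^{1/2}\|\FF\|_{L_2(\O)}(l_\eps[\s_\eps-\bphi_\eps,\s_\eps-\bphi_\eps])^{1/2}$. For the second I use $\|g\|_{L_\infty}$ together with the elementary bounds $\|b(\D)\bphi_\eps\|_{L_2(\O)}\le\|b(\D)\bphi_\eps\|_{L_2(\R^3)}\le\alpha_1^{1/2}\|\bphi_\eps\|_{H^1(\R^3)}$ (from \eqref{bb_eps}) and $\|b(\D)(\s_\eps-\bphi_\eps)\|_{L_2(\O)}\le\|g^{-1}\|_{L_\infty}^{1/2}(l_\eps[\s_\eps-\bphi_\eps,\s_\eps-\bphi_\eps])^{1/2}$. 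The third term is bounded by $\|\bphi_\eps\|_{L_2(\R^3)}\|\s_\eps-\bphi_\eps\|_{L_2(\O)}$. Collecting these estimates, the right-hand side is majorized by $C\big(\eps^{1/2}\|\FF\|_{L_2(\O)}+\|\bphi_\eps\|_{H^1(\R^3)}\big)\big(l_\eps[\s_\eps-\bphi_\eps,\s_\eps-\bphi_\eps]+\|\s_\eps-\bphi_\eps\|^2_{L_2(\O)}\big)^{1/2}$ with $C$ depending only on the problem data \eqref{data}.

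Dividing by $\big(l_\eps[\s_\eps-\bphi_\eps,\s_\eps-\bphi_\eps]+\|\s_\eps-\bphi_\eps\|^2_{L_2(\O)}\big)^{1/2}$ and then applying the lower coercivity bound in \eqref{form_l_eps_est_O} to the function $\s_\eps-\bphi_\eps$, I obtain $\|\s_\eps-\bphi_\eps\|_{H^1(\O)}\le C'\big(\eps^{1/2}\|\FF\|_{L_2(\O)}+\|\bphi_\eps\|_{H^1(\R^3)}\big)$, and \eqref{s_eps_H1} follows by the triangle inequality since $\|\bphi_\eps\|_{H^1(\O)}\le\|\bphi_\eps\|_{H^1(\R^3)}$. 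I do not expect a serious obstacle here: the substance is entirely in the earlier estimate \eqref{I_eps_estimate} and in having chosen the cut-off $\theta_\eps$ so that $\bphi_\eps$ is globally $H^1$ while sharing the normal boundary trace of $\s_\eps$. The only points needing care are the verification that $\bphi_\eps\in H^1(\R^3;\C^3)$ and the bookkeeping that all constants depend only on the data \eqref{data}; the genuinely hard work — bounding $\|\bphi_\eps\|_{H^1(\R^3)}$ — is deferred to the subsequent analysis.
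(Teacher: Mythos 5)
Your argument is correct and is essentially the paper's own proof: the paper likewise forms the identity for $\s_\eps-\bphi_\eps$ (its $\mathcal{J}_\eps[\bzeta]$ is exactly the two $\bphi_\eps$-terms you move to the right), tests with $\bzeta=\s_\eps-\bphi_\eps$, estimates $\mathcal{I}_\eps$ via \eqref{I_eps_estimate} and the remaining terms crudely through $\|\bphi_\eps\|_{H^1(\R^3)}$, and concludes with the coercivity bound \eqref{form_l_eps_est_O} and the triangle inequality. Your observations about the matching normal traces (via $\theta_\eps\equiv 1$ on $\partial\O$) and the membership $\bphi_\eps\in H^1(\R^3;\C^3)$ are exactly the points the paper also records before performing the substitution.
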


\begin{proof}
By \eqref{popravka}, \eqref{popravka_identity}, and \eqref{srezka}, the function
$\s_\eps - \bphi_\eps \in H^1(\O;\C^3)$ satisfies the boundary condition  
$$
\big(\mu_0^{1/2} (\s_\eps - \bphi_\eps)\big)_n\vert_{\partial \O}=0
$$
 and the identity 
\begin{equation}
\label{s_eps - phi_eps}
\begin{split}
l_\eps[ \s_\eps -\bphi_\eps, \bzeta] + (\s_\eps -\bphi_\eps, \bzeta)_{L_2(\O)} 
= {\mathcal I}_\eps[\bzeta] - {\mathcal J}_\eps[\bzeta],
\\
\bzeta \in H^1(\O;\C^3), \quad (\mu_0^{1/2} \bzeta)_n\vert_{\partial \O}=0,
\end{split}
\end{equation}
where  
\begin{equation}\label{J_eps}
{\mathcal J}_\eps[\bzeta] := 
(g^\eps b(\D) \bphi_\eps, b(\D) \bzeta)_{L_2(\O)}  + (\bphi_\eps, \bzeta)_{L_2(\O)}.
\end{equation}
By \eqref{form_l_eps_est}, we have 
\begin{equation}\label{J_eps_est}
|{\mathcal J}_\eps[\bzeta] | \le \sqrt{c_2} \| \D \bphi_\eps \|_{L_2(\R^3)} (l_\eps[\bzeta, \bzeta])^{1/2} + 
\| \bphi_\eps \|_{L_2(\R^3)} \|\bzeta\|_{L_2(\O)}.
\end{equation}

Substituting $\bzeta= \s_\eps -\bphi_\eps$ in \eqref{s_eps - phi_eps} and using \eqref{I_eps_estimate} and \eqref{J_eps_est}, we arrive at
\begin{align*}
l_\eps[ \s_\eps -\bphi_\eps, \s_\eps -\bphi_\eps ] + \|\s_\eps -\bphi_\eps\|^2_{L_2(\O)}
\le
2 {\mathcal C}^2_7 \eps \| \FF\|^2_{L_2(\O)} + 2 c_2 \| \D \bphi_\eps \|^2_{L_2(\R^3)} + \| \bphi_\eps \|^2_{L_2(\R^3)}.
\end{align*}
Combining this with the lower estimate \eqref{form_l_eps_est_O},  we obtain 
$$
\|\s_\eps -\bphi_\eps \|_{H^1(\O)} \le \check{\mathcal C}_8 \big( \eps^{1/2} \| \FF\|_{L_2(\O)} + \|\bphi_\eps\|_{H^1(\R^3)}\big),
$$ 
where the constant $\check{\mathcal C}_8$ depends only on the problem data \eqref{data}.  This implies \eqref{s_eps_H1}.
\end{proof}

\begin{lemma}\label{lemma_phi}
Suppose that the number $\eps_1$ satisfies Condition \textnormal{\ref{condition varepsilon}}. 
Let $\bphi_\eps$ be defined by \eqref{phi_eps}. For $0< \eps \le \eps_1$ we have
\begin{align}\label{lemma_phi_0}
&\| {\bphi}_\eps  \|_{L_2(\R^3)} \le {\mathcal C}_9 \eps \| \FF\|_{L_2(\O)},
\\
\label{lemma_phi_1}
&\| {\bphi}_\eps  \|_{H^1(\R^3)} \le {\mathcal C}_{10} \eps^{1/2} \| \FF\|_{L_2(\O)},
\end{align}
where the constants ${\mathcal C}_9$ and ${\mathcal C}_{10}$ depend on the problem data \eqref{data}.  
\end{lemma}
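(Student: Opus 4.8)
The plan is to prove \eqref{lemma_phi_0} directly and then deduce \eqref{lemma_phi_1} by differentiating the product in \eqref{phi_eps} and estimating the resulting three terms separately.

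First I would treat \eqref{lemma_phi_0}. Since $0\le\theta_\eps(\x)\le1$, we have $\|\bphi_\eps\|_{L_2(\R^3)}\le\eps\,\|\Lambda^\eps S_\eps b(\D)\wt\f_0\|_{L_2(\R^3)}$, and the right-hand side is already bounded by $\eps\,{\mathfrak C}_\Lambda{\mathcal C}_4\|g^{-1}\|_{L_\infty}^{1/2}\|\FF\|_{L_2(\O)}$ in \eqref{3.25a} (which rests on Proposition~\ref{prop f^eps S_eps}, inequality \eqref{S_eps <= 1}, and estimate \eqref{Lambda_est}). This yields \eqref{lemma_phi_0} with ${\mathcal C}_9={\mathfrak C}_\Lambda{\mathcal C}_4\|g^{-1}\|_{L_\infty}^{1/2}$.

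For \eqref{lemma_phi_1} it suffices to bound $\|\D\bphi_\eps\|_{L_2(\R^3)}$. Using $\D(\Lambda^\eps)=\eps^{-1}(\D\Lambda)^\eps$ and the fact that $S_\eps$ commutes with $\D$ (so $\D S_\eps b(\D)\wt\f_0 = S_\eps\D b(\D)\wt\f_0$, which makes sense since $\wt\f_0\in H^2(\R^3;\C^3)$), I would write
\begin{equation*}
\D\bphi_\eps = \eps(\D\theta_\eps)\otimes\big(\Lambda^\eps S_\eps b(\D)\wt\f_0\big) + \theta_\eps(\D\Lambda)^\eps S_\eps b(\D)\wt\f_0 + \eps\,\theta_\eps\Lambda^\eps S_\eps\D b(\D)\wt\f_0
\end{equation*}
and estimate each summand in $L_2(\R^3)$. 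In the first summand, $\eps|\D\theta_\eps|\le\kappa$ and $\operatorname{supp}\theta_\eps\subset(\partial\O)_\eps$, so it reduces to bounding $\|\Lambda^\eps S_\eps b(\D)\wt\f_0\|_{L_2((\partial\O)_\eps)}$; Lemma~\ref{Lemma 3.6 from Su15} with $h=\Lambda$ (admissible by \eqref{Lambda_est}) and $\u=b(\D)\wt\f_0\in H^1(\R^3;\C^4)$, together with $\|b(\D)\wt\f_0\|_{H^1(\R^3)}\le C\|\wt\f_0\|_{H^2(\R^3)}\le C C_\O^{(2)}\wh c\|\FF\|_{L_2(\O)}$ (from \eqref{phi_tilde_2}), gives a bound of order $\eps^{1/2}\|\FF\|_{L_2(\O)}$. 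The second summand is handled the same way, now with $h=\D\Lambda\in L_2(\Omega)$ (again \eqref{Lambda_est}), and again is of order $\eps^{1/2}\|\FF\|_{L_2(\O)}$. The third summand carries the explicit prefactor $\eps$; by Proposition~\ref{prop f^eps S_eps} with $f=\Lambda$ and \eqref{Lambda_est}, $\|\Lambda^\eps S_\eps\D b(\D)\wt\f_0\|_{L_2(\R^3)}\le|\Omega|^{-1/2}\|\Lambda\|_{L_2(\Omega)}\|\D b(\D)\wt\f_0\|_{L_2(\R^3)}\le{\mathfrak C}_\Lambda C\|\wt\f_0\|_{H^2(\R^3)}$, so this term is of order $\eps\|\FF\|_{L_2(\O)}$. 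Summing and using $\eps\le\eps_1\le1$ gives $\|\D\bphi_\eps\|_{L_2(\R^3)}\le{\mathcal C}\eps^{1/2}\|\FF\|_{L_2(\O)}$; combining with \eqref{lemma_phi_0} (and $\eps\le\eps^{1/2}$) proves \eqref{lemma_phi_1}.

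The step that needs genuine care, rather than bookkeeping, is the middle summand $\theta_\eps(\D\Lambda)^\eps S_\eps b(\D)\wt\f_0$: differentiating the rapidly oscillating factor $\Lambda^\eps$ produces a dangerous $\eps^{-1}$, which the prefactor $\eps$ in \eqref{phi_eps} only compensates to $O(1)$, so the entire gain of the extra half power of $\eps$ must come from the cut-off $\theta_\eps$ confining everything to the thin strip $(\partial\O)_\eps$, where Lemma~\ref{Lemma 3.6 from Su15} supplies the factor $\eps^{1/2}$. The same mechanism — thin strip plus Lemma~\ref{Lemma 3.6 from Su15} — disposes of the $\D\theta_\eps$ summand, while the third summand requires no boundary-strip argument and is a plain application of Proposition~\ref{prop f^eps S_eps}.
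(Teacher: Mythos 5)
Your proposal is correct and follows essentially the same route as the paper: \eqref{lemma_phi_0} from \eqref{3.25a} together with $0\le\theta_\eps\le 1$, and \eqref{lemma_phi_1} via the identical three-term Leibniz decomposition of $D_j\bphi_\eps$, with the $\D\theta_\eps$-term and the $(\D\Lambda)^\eps$-term controlled by Lemma~\ref{Lemma 3.6 from Su15} on the strip $(\partial\O)_\eps$ and the remaining term by Proposition~\ref{prop f^eps S_eps}. Your closing remark correctly identifies the middle summand as the place where the half power of $\eps$ must come from the boundary-strip estimate.
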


\begin{proof}
Estimate \eqref{lemma_phi_0} follows from \eqref{3.25a} and \eqref{srezka}.

Consider the derivatives 
\begin{equation}\label{lemma_phi_3}
D_j \bphi_\eps = \eps (D_j \theta_\eps) \Lambda^\eps S_\eps b(\D) \wt{\f}_0+ \theta_\eps (D_j\Lambda)^\eps S_\eps b(\D) \wt{\f}_0
+ \eps \theta_\eps \Lambda^\eps S_\eps D_j b(\D) \wt{\f}_0.
\end{equation}
The norm of the first summand on the right is estimated with the help of \eqref{srezka} and Lemma~\ref{Lemma 3.6 from Su15}:
\begin{align*}
\eps \|(D_j \theta_\eps) \Lambda^\eps S_\eps b(\D) \wt{\f}_0\|_{L_2(\R^3)}& \le 
\kappa \| \Lambda^\eps S_\eps b(\D) \wt{\f}_0\|_{L_2((\partial \O)_\eps)}
\\
&\le \kappa {\mathfrak C}_\Lambda \sqrt{\beta_*} \eps^{1/2} \| b(\D) \wt{\f}_0\|^{1/2}_{H^1(\R^3)} \| b(\D) \wt{\f}_0\|^{1/2}_{L_2(\R^3)}
\\& \le {\mathcal C}'_{10} \eps^{1/2}\|\FF\|_{L_2(\O)},
\end{align*}
where ${\mathcal C}_{10}'= \kappa {\mathfrak C}_\Lambda \sqrt{\beta_* \alpha_1} \wh{c} C_\O^{(2)}$. 
We have taken  \eqref{bb_eps}, \eqref{Lambda_est}, and  \eqref{phi_tilde_2} into account. 
Similarly,  Lemma \ref{Lemma 3.6 from Su15} and relations \eqref{bb_eps}, \eqref{Lambda_est},  \eqref{phi_tilde_2} 
imply the following estimate for the norm of the second term in \eqref{lemma_phi_3}:
$$
\| \theta_\eps (D_j\Lambda)^\eps S_\eps b(\D) \wt{\f}_0 \|_{L_2(\R^3)} \le {\mathcal C}''_{10} \eps^{1/2}\|\FF\|_{L_2(\O)},
$$
where 
$
{\mathcal C}_{10}''= {\mathfrak C}_\Lambda \sqrt{\beta_* \alpha_1} \wh{c} C_\O^{(2)}.
$
The norm of the third term in \eqref{lemma_phi_3} is estimated by Proposition \ref{prop f^eps S_eps}
and relations \eqref{bb_eps}, \eqref{Lambda_est}, and \eqref{phi_tilde_2}:
$$
\eps \|\theta_\eps \Lambda^\eps S_\eps D_j b(\D) \wt{\f}_0\|_{L_2(\R^3)} \le {\mathcal C}_{10}''' \eps \|\FF\|_{L_2(\O)},
$$
where
$
{\mathcal C}_{10}'''= {\mathfrak C}_\Lambda \sqrt{\alpha_1} \wh{c} C_\O^{(2)}.
$
As a result, we arrive at the estimate 
$$
\| \D \bphi_\eps \|_{L_2(\R^3)} \le \check{\mathcal C}_{10} \eps^{1/2} \| \FF \|_{L_2(\O)},
$$
where the constant $\check{\mathcal C}_{10}$ depends only on the problem data \eqref{data}. 
Together with \eqref{lemma_phi_0}, this implies \eqref{lemma_phi_1}.
\end{proof}

Lemmas~\ref{lem4.2} and \ref{lemma_phi} directly imply the following statement. 

\begin{corollary}
For $0< \eps \le \eps_1$ we have 
\begin{equation}\label{s_eps_est}
\| \s_\eps \|_{H^1(\O)} \le {\mathcal C}_{11} \eps^{1/2} \| \FF \|_{L_2(\O)},
\end{equation}
where the constant ${\mathcal C}_{11}$ depends only on the problem data \eqref{data}.
\end{corollary}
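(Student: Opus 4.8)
The plan is straightforward: the corollary is an immediate arithmetic consequence of the two preceding lemmas, so I would simply chain them together and keep track of the constants.

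First I would observe that $\eps_1 = \eps_0(1+r_1)^{-1} \le \eps_0$ (recall $r_1 > 0$), so that for every $\eps$ in the range $0 < \eps \le \eps_1$ the hypothesis of Lemma~\ref{lem4.2} is satisfied. Applying that lemma gives
\[
\| \s_\eps \|_{H^1(\O)} \le {\mathcal C}_8 \big( \eps^{1/2} \| \FF \|_{L_2(\O)} + \| \bphi_\eps\|_{H^1(\R^3)} \big),
\]
with $\bphi_\eps$ the function introduced in \eqref{phi_eps}. Next I would insert the bound \eqref{lemma_phi_1} of Lemma~\ref{lemma_phi}, which holds on the same range $0 < \eps \le \eps_1$, namely $\| \bphi_\eps \|_{H^1(\R^3)} \le {\mathcal C}_{10} \eps^{1/2}\|\FF\|_{L_2(\O)}$. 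Combining the two displays yields
\[
\| \s_\eps \|_{H^1(\O)} \le {\mathcal C}_8 (1 + {\mathcal C}_{10}) \, \eps^{1/2} \| \FF \|_{L_2(\O)},
\]
which is the claimed estimate with ${\mathcal C}_{11} := {\mathcal C}_8(1+{\mathcal C}_{10})$. Since both ${\mathcal C}_8$ and ${\mathcal C}_{10}$ depend only on the problem data \eqref{data}, so does ${\mathcal C}_{11}$.

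There is no genuine obstacle at this stage: all of the analytic content has already been spent in Lemmas~\ref{lem4.2} and~\ref{lemma_phi} — the choice of the cut-off $\theta_\eps$ in \eqref{srezka}, the decomposition $\s_\eps = \bphi_\eps + (\s_\eps - \bphi_\eps)$, the energy inequality obtained by testing the identity \eqref{s_eps - phi_eps} with $\bzeta = \s_\eps - \bphi_\eps$ and invoking the coercivity bound \eqref{form_l_eps_est_O}, and the boundary-strip estimates for $\bphi_\eps$ via Lemma~\ref{Lemma 3.6 from Su15}. The only point one must verify with a little care is that the two lemmas are stated on compatible $\eps$-ranges, which they are, since $\eps_1 \le \eps_0$; hence the combined estimate \eqref{s_eps_est} is legitimate on all of $(0,\eps_1]$.
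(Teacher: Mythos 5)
Your proposal is correct and coincides with the paper's argument: the paper states that Lemmas \ref{lem4.2} and \ref{lemma_phi} directly imply the corollary, which is precisely the chaining of \eqref{s_eps_H1} with \eqref{lemma_phi_1} that you carry out (including the observation that $\eps_1\le\eps_0$ makes the ranges compatible). The constant ${\mathcal C}_{11}={\mathcal C}_8(1+{\mathcal C}_{10})$ is the natural one and depends only on the problem data \eqref{data}.
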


\noindent\textit{Completion of the proof of Theorem} \ref{th2_O}.
Relations \eqref{vyvod1} and \eqref{s_eps_est} imply the required estimate \eqref{res3} with the constant 
${\mathcal C}_2= {\mathcal C}_5 + {\mathcal C}_{11}$. $\square$

\subsection{Proof of Theorem \ref{th3_O}}
From \eqref{res3} it follows that 
\begin{equation}\label{4.21}
\| g^\eps b(\D) (\f_\eps - \bpsi_\eps)\|_{L_2(\O)} \le {\mathcal C}_{12} \eps^{1/2} \| \FF \|_{L_2(\O)},
\end{equation}
where the constant ${\mathcal C}_{12}$ depends only on the problem data \eqref{data}. 
According to \eqref{corr2_O}, 
\begin{align*}
g^\eps b(\D) \bpsi_\eps& \!=\! g^\eps b(\D) \f_0\! +\!
\eps g^\eps b(\D) (\Lambda^\eps S_\eps b(\D) \wt{\f}_0) 
\\
&\!=\! g^\eps b(\D) \f_0\! +\!
 g^\eps (b(\D) \Lambda)^\eps S_\eps b(\D) \wt{\f}_0 
\!+ \!\eps \!\sum_{j=1}^3 g^\eps b_j \Lambda^\eps S_\eps D_j b(\D) \wt{\f}_0
\\
&\!=\! \wt{g}^\eps b(\D) \f_0 \!+\!
 g^\eps (b(\D) \Lambda)^\eps (S_\eps\!-\!I) b(\D) \wt{\f}_0 
\!+\! \eps\! \sum_{j=1}^3 g^\eps b_j \Lambda^\eps S_\eps D_j b(\D) \wt{\f}_0.
\end{align*}
By Proposition \ref{prop f^eps S_eps} and relations \eqref{Lambda_est} and \eqref{phi_tilde_2}, 
the norm of the third summand on the right does not exceed  
$C \eps \| \FF \|_{L_2(\O)}$. The second term on the right can be written as 
\begin{align*}
&g^\eps (b(\D) \Lambda)^\eps (S_\eps-I) b(\D) \wt{\f}_0
\\
&= -i \begin{pmatrix} 
\left(\Sigma^\eps + (\eta^0)^{-1} - (\eta^\eps)^{-1}\right) (S_\eps-I) \rot \mu_0^{-1/2} \wt{\f}_0
\cr 
 (\underline{\nu}- \nu^\eps) (S_\eps-I) \div \mu_0^{1/2} \wt{\f}_0
\end{pmatrix}.
\end{align*}
Similarly to the proof of Lemma \ref{lem1.7}, using Propositions \ref{prop_Seps - I} and \ref{prop_PSu}, it is easy to check that 
$$
\| g^\eps (b(\D) \Lambda)^\eps (S_\eps-I) b(\D) \wt{\f}_0\|_{L_2(\R^3)} \le {\mathcal C}'_{12} \eps \|\FF\|_{L_2(\O)},
$$
where the constant ${\mathcal C}'_{12}$ depends only on the problem data \eqref{data}.
As a result, we obtain 
\begin{equation}\label{4.23}
\| g^\eps b(\D) \bpsi_\eps - \wt{g}^\eps b(\D) \f_0\|_{L_2(\O)} \le \check{\mathcal C}_{12} \eps \| \FF \|_{L_2(\O)},
\end{equation}
where the constant $\check{\mathcal C}_{12}$ depends only on the problem data \eqref{data}.

Relations \eqref{4.21} and \eqref{4.23} imply the required estimate
$$
\| g^\eps b(\D) \f_\eps - \wt{g}^\eps b(\D) \f_0\|_{L_2(\O)} \le ({\mathcal C}_{12}+ \check{\mathcal C}_{12}) \eps^{1/2} \| \FF \|_{L_2(\O)},
$$
which is equivalent to the pair of inequalities \eqref{res5}.
$\square$

\subsection{Estimate for the correction term in $L_2(\O)$. Completion of the proof of Theorem~\ref{th1_O}}

\begin{lemma}
For $0< \eps \le \eps_1$ we have
\begin{equation}\label{4.25}
\| \s_\eps \|_{L_2(\O)} \le {\mathcal C}_{13} \eps \| \FF \|_{L_2(\O)},
\end{equation}
where the constant ${\mathcal C}_{13}$ depends only on the problem data \eqref{data}.
\end{lemma}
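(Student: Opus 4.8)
The plan is to deduce \eqref{4.25} from the already proved Theorem~\ref{th2_O} by a duality argument of the same type as in \cite{PSu, Su_SIAM}. First, by Lemma~\ref{lemma_phi} we have $\|\bphi_\eps\|_{L_2(\O)}\le\mathcal C_9\eps\|\FF\|_{L_2(\O)}$, so it suffices to estimate $\|\s_\eps-\bphi_\eps\|_{L_2(\O)}$; recall that, by the construction of the boundary part of $\bphi_\eps$, the difference $\s_\eps-\bphi_\eps$ has vanishing normal trace on $\partial\O$ and satisfies identity \eqref{s_eps - phi_eps} with right-hand side $\mathcal I_\eps[\bzeta]-\mathcal J_\eps[\bzeta]$. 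Fix $\bPhi\in L_2(\O;\C^3)$ with $\|\bPhi\|_{L_2(\O)}=1$ and set $\bzeta_\eps:=(L_\eps+I)^{-1}\bPhi$, $\bzeta_0:=(L^0+I)^{-1}\bPhi$. Since $\s_\eps-\bphi_\eps$ and $\bzeta_\eps$ are both admissible test functions (they satisfy $(\mu_0^{1/2}\,\cdot\,)_n\vert_{\partial\O}=0$) and the form is Hermitian, substituting each into the identity satisfied by the other gives
$$
(\s_\eps-\bphi_\eps,\bPhi)_{L_2(\O)}=\mathcal I_\eps[\bzeta_\eps]-\mathcal J_\eps[\bzeta_\eps].
$$
Hence \eqref{4.25} will follow by taking the supremum over such $\bPhi$, once I show that $|\mathcal I_\eps[\bzeta_\eps]|+|\mathcal J_\eps[\bzeta_\eps]|\le C\eps\|\FF\|_{L_2(\O)}$ uniformly in $\bPhi$.

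The new ingredient needed is a bound on the flux of the dual solution in the $\eps$-layer $B_\eps=\O\cap(\partial\O)_\eps$:
$$
\|b(\D)\bzeta_\eps\|_{L_2(B_\eps)}\le C\eps^{1/2}.
$$
To obtain it I apply Theorem~\ref{th2_O} with $\bPhi$ in place of $\FF$: with $\wt\bzeta_0:=P_\O\bzeta_0$ and $\wh\bzeta_\eps:=\bigl(\wt\bzeta_0+\eps\Lambda^\eps S_\eps b(\D)\wt\bzeta_0\bigr)\vert_\O$ (the analogue of \eqref{corr1_O}, \eqref{corr2_O}), Theorem~\ref{th2_O} gives $\|\bzeta_\eps-\wh\bzeta_\eps\|_{H^1(\O)}\le\mathcal C_2\eps^{1/2}$, while $\|\wt\bzeta_0\|_{H^2(\R^3)}\le C$ by \eqref{L0_H2} and \eqref{3.5}. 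Using $\eps\, b(\D)\bigl(\Lambda^\eps S_\eps b(\D)\wt\bzeta_0\bigr)=(b(\D)\Lambda)^\eps S_\eps b(\D)\wt\bzeta_0+\eps\sum_k b_k\Lambda^\eps S_\eps D_kb(\D)\wt\bzeta_0$, I estimate the contributions to $b(\D)\wh\bzeta_\eps$ on $B_\eps$ separately: the smooth part $b(\D)\bzeta_0\in H^1(\O;\C^4)$ by Lemma~\ref{lemma ots int O_eps B_eps}; the oscillating term $(b(\D)\Lambda)^\eps S_\eps b(\D)\wt\bzeta_0$, since $B_\eps\subset(\partial\O)_\eps$, by Lemma~\ref{Lemma 3.6 from Su15} (with $h$ ranging over the entries of $b(\D)\Lambda$, which lie in $L_2(\Omega)$ because $\Lambda\in H^1(\Omega)$ by \eqref{Lambda_est}, and $\mathbf u=b(\D)\wt\bzeta_0\in H^1(\R^3)$); and the last term by Proposition~\ref{prop f^eps S_eps} together with \eqref{S_eps <= 1}. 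Each of these is $O(\eps^{1/2})$; adding $\|b(\D)(\bzeta_\eps-\wh\bzeta_\eps)\|_{L_2(\O)}\le C\eps^{1/2}$ yields the claim, and in particular $\|\rot\mu_0^{-1/2}\bzeta_\eps\|_{L_2(B_\eps)}\le C\eps^{1/2}$.

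Granting this bound, the rest is routine. From \eqref{form_l_eps_est_O} and $\bzeta_\eps=(L_\eps+I)^{-1}\bPhi$ one has $\|\bzeta_\eps\|_{H^1(\O)}\le C$ and $\|\bzeta_\eps\|_{L_2(\O)}\le1$. For $\mathcal I_\eps[\bzeta_\eps]$ I use the splitting \eqref{I_eps=sum}: the term $\mathcal I^{(2)}_\eps[\bzeta_\eps]$ of \eqref{I2_eps} is $O(\eps)\|\FF\|_{L_2(\O)}$ by \eqref{phi_0_est}, boundedness of $\Phi_j$ (Remark~\ref{LaUr}) and $\|\rot\mu_0^{-1/2}\bzeta_\eps\|_{L_2(\O)}\le C$; and by the representation \eqref{I1_eps=}, $\mathcal I^{(1)}_\eps[\bzeta_\eps]=\sum_j({\mathbf f}_{j,\eps},\rot\mu_0^{-1/2}\bzeta_\eps)_{L_2(\O)}$, where ${\mathbf f}_{j,\eps}$ from \eqref{4.8a} is supported in $B_\eps$ and satisfies $\sum_j\|{\mathbf f}_{j,\eps}\|_{L_2(\O)}\le\mathcal C_7''\eps^{1/2}\|\FF\|_{L_2(\O)}$ by \eqref{fj_eps_est}, so by the boundary-layer bound
$$
|\mathcal I^{(1)}_\eps[\bzeta_\eps]|\le\sum_j\|{\mathbf f}_{j,\eps}\|_{L_2(\O)}\,\|\rot\mu_0^{-1/2}\bzeta_\eps\|_{L_2(B_\eps)}\le C\eps\|\FF\|_{L_2(\O)}.
$$
Next, $\mathcal J_\eps[\bzeta_\eps]=(g^\eps b(\D)\bphi_\eps,b(\D)\bzeta_\eps)_{L_2(\O)}+(\bphi_\eps,\bzeta_\eps)_{L_2(\O)}$; the second summand is $O(\eps)\|\FF\|_{L_2(\O)}$ since $\|\bphi_\eps\|_{L_2(\O)}\le\mathcal C_9\eps\|\FF\|_{L_2(\O)}$ and $\|\bzeta_\eps\|_{L_2(\O)}\le1$, while the first, being supported in $B_\eps$, is bounded by $\|g\|_{L_\infty}\|b(\D)\bphi_\eps\|_{L_2(B_\eps)}\|b(\D)\bzeta_\eps\|_{L_2(B_\eps)}\le C\eps\|\FF\|_{L_2(\O)}$, using $\|b(\D)\bphi_\eps\|_{L_2(\R^3)}\le C\|\bphi_\eps\|_{H^1(\R^3)}\le C\eps^{1/2}\|\FF\|_{L_2(\O)}$ from Lemma~\ref{lemma_phi}. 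Combining these estimates yields $|(\s_\eps-\bphi_\eps,\bPhi)_{L_2(\O)}|\le C\eps\|\FF\|_{L_2(\O)}$, whence \eqref{4.25}.

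The main obstacle is exactly the boundary-layer flux estimate $\|b(\D)\bzeta_\eps\|_{L_2(B_\eps)}\le C\eps^{1/2}$: this is the single point where the already established $H^1$-approximation (Theorem~\ref{th2_O}) has to be invoked again, and it is the mechanism $\eps^{1/2}\cdot\eps^{1/2}=\eps$ — one factor $\eps^{1/2}$ coming from the width of the strip $B_\eps$, available simultaneously for the corrector-type objects ${\mathbf f}_{j,\eps}$ and $b(\D)\bphi_\eps$ (which by themselves are only $O(\eps^{1/2})$) and for $b(\D)\bzeta_\eps\vert_{B_\eps}$ — that upgrades the $O(\eps^{1/2})$ estimates of this section to the sharp order $O(\eps)$.
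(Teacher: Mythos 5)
Your proposal is correct and follows essentially the same route as the paper: the same duality identity $(\s_\eps-\bphi_\eps,\G)_{L_2(\O)}=\mathcal I_\eps[\bzeta_\eps]-\mathcal J_\eps[\bzeta_\eps]$ with $\bzeta_\eps=(L_\eps+I)^{-1}\G$, the same splitting of $\mathcal I_\eps$ via \eqref{I_eps=sum}, and the same key boundary-layer flux bound for the dual solution on $B_\eps$ obtained by re-invoking Theorem~\ref{th2_O} together with Lemmas~\ref{lemma ots int O_eps B_eps} and~\ref{Lemma 3.6 from Su15} and Proposition~\ref{prop f^eps S_eps}. The only cosmetic difference is that you bound $\|b(\D)\bzeta_\eps\|_{L_2(B_\eps)}$ where the paper bounds $\|\D\bzeta_\eps\|_{L_2(B_\eps)}$; these play identical roles here.
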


\begin{proof}
Let $\G \in L_2(\O;\C^3)$. We put $\bzeta_\eps := (L_\eps + I)^{-1} \G$.
We substitute $\bzeta = \bzeta_\eps$ in the identity \eqref{s_eps - phi_eps}.
Then the left-hand side of this identity takes the form  $(\s_\eps -\bphi_\eps, \G)_{L_2(\O)}$.  Hence, 
\begin{equation}
\label{4.26}
(\s_\eps -\bphi_\eps, \G)_{L_2(\O)} 
= {\mathcal I}_\eps[\bzeta_\eps] - {\mathcal J}_\eps[\bzeta_\eps].
\end{equation}
Combining \eqref{I_eps=sum}, \eqref{I2_eps_est}, \eqref{J_eps}, \eqref{lemma_phi_0}, \eqref{4.26}, and the obvious estimate 
$$
l_\eps [\bzeta_\eps, \bzeta_\eps] + \|\bzeta_\eps \|^2_{L_2(\O)} \le \|\G\|^2_{L_2(\O)},
$$
we have
\begin{equation}
\label{4.27}
|(\s_\eps -\bphi_\eps, \G)_{L_2(\O)}| \le ({\mathcal C}_{7}'+{\mathcal C}_{9}) \eps \|\FF\|_{L_2(\O)} \|\G \|_{L_2(\O)} 
+ |{\mathcal I}^{(1)}_\eps[\bzeta_\eps]| + 
|(g^\eps b(\D) \bphi_\eps, b(\D) \bzeta_\eps)_{L_2(\O)}|.
\end{equation}
Since the functions  ${\mathbf f}_{j,\eps}$ and $\bphi_\eps$ are supported in the $\eps$-neighborhood of the boundary $\partial \O$ (see \eqref{srezka}, \eqref{4.8a}, and \eqref{phi_eps}), from \eqref{I1_eps=}, \eqref{fj_eps_est}, and \eqref{lemma_phi_1} it follows that 
\begin{equation}
\label{4.28}
\begin{split}
|{\mathcal I}^{(1)}_\eps[\bzeta_\eps]| + 
|(g^\eps b(\D) \bphi_\eps, b(\D) \bzeta_\eps)_{L_2(\O)}|\le
{\mathcal C}_{13}' \eps^{1/2} \| \FF \|_{L_2(\O)} \| \D \bzeta_\eps \|_{L_2(B_\eps)},
\end{split}
\end{equation}
where the constant ${\mathcal C}_{13}'$ depends only on the problem data \eqref{data}. 

Applying the already proved Theorem \ref{th2_O}, we approximate the function~$\bzeta_\eps$ by  
$
\bzeta_0 + \eps \Lambda^\eps S_\eps b(\D) \wt{\bzeta}_0,
$
where
$\bzeta_0 = (L^0 +I)^{-1}\G$ and $\wt{\bzeta}_0 = P_\O \bzeta_0$.
We have:
\begin{equation}
\label{4.29}
\begin{split}
 \| \D \bzeta_\eps \|_{L_2(B_\eps)} &\le \| \D (\bzeta_\eps - \bzeta_0 - \eps \Lambda^\eps S_\eps b(\D) \wt{\bzeta}_0)\|_{L_2(\O)}
+ \|\D \bzeta_0 \|_{L_2(B_\eps)} 
\\
&
+ \eps \| \D ( \Lambda^\eps S_\eps b(\D) \wt{\bzeta}_0)\|_{L_2((\partial \O)_\eps)}.
\end{split}
\end{equation}
By Theorem \ref{th2_O}, the first term on the right does not exceed ${\mathcal C}_2 \eps^{1/2} \| \G \|_{L_2(\O)}$.
The second term is estimated by $\sqrt{\beta}\wh{c} \eps^{1/2}\|\G\|_{L_2(\O)}$, due to Lemma \ref{lemma ots int O_eps B_eps} and estimate \eqref{L0_H2}. Let us estimate the third term:
\begin{align*}
&\eps \| \D ( \Lambda^\eps S_\eps b(\D) \wt{\bzeta}_0)\|_{L_2((\partial \O)_\eps)} 
\\
&\le
\| (\D \Lambda)^\eps S_\eps b(\D) \wt{\bzeta}_0 \|_{L_2((\partial \O)_\eps)}+
\eps \| \Lambda^\eps S_\eps \D b(\D) \wt{\bzeta}_0\|_{L_2(\R^3)}
\\
&\le
\sqrt{\beta_*} {\mathfrak C}_\Lambda \eps^{1/2} \| b(\D) \wt{\bzeta}_0\|^{1/2}_{H^1(\R^3)} \| b(\D) \wt{\bzeta}_0 \|^{1/2}_{L_2(\R^3)}
+ {\mathfrak C}_\Lambda \eps \| \D b(\D) \wt{\bzeta}_0 \|_{L_2(\R^3)}.
\end{align*}
We have used Lemma \ref{Lemma 3.6 from Su15}, Proposition \ref{prop f^eps S_eps}, and estimate \eqref{Lambda_est}. 
Combining this with the analog of estimate \eqref{phi_tilde_2} for $\wt{\bzeta}_0$, we see that the third term in  
\eqref{4.29} does not exceed ${\mathcal C}''_{13} \eps^{1/2} \| \G \|_{L_2(\O)}$, 
where ${\mathcal C}''_{13}$ depends only on the problem data \eqref{data}. As a result, we arrive at the inequality
\begin{equation}
\label{4.30}
 \| \D \bzeta_\eps \|_{L_2(B_\eps)} \le ({\mathcal C}_{2} +\sqrt{\beta}\wh{c}+ {\mathcal C}''_{13}) \eps^{1/2}\|\G \|_{L_2(\O)},\quad 0< \eps \le \eps_1.
\end{equation}

Relations \eqref{4.27}, \eqref{4.28}, and \eqref{4.30} imply that
$$
|(\s_\eps -\bphi_\eps, \G)_{L_2(\O)}| \le
\check{\mathcal C}_{13} \eps \|\FF\|_{L_2(\O)} \|\G \|_{L_2(\O)}, \quad \forall \G \in L_2(\O;\C^3),
$$
where the constant $\check{\mathcal C}_{13}$ depends only on the problem data \eqref{data}. Hence,
$$
\| \s_\eps -\bphi_\eps \|_{L_2(\O)} \le \check{\mathcal C}_{13} \eps \|\FF\|_{L_2(\O)}.
$$
Together with estimate \eqref{lemma_phi_0}, this implies \eqref{4.25}.
\end{proof}

\noindent\textit{Completion of the proof of Theorem} \ref{th1_O}.
Relations \eqref{vyvod3} and \eqref{4.25} imply the required estimate \eqref{res1} with the constant ${\mathcal C}_1= {\mathcal C}_6 + {\mathcal C}_{13}$. $\square$

\section{The stationary Maxwell system\label{Sec5}}

\subsection{Functional classes}
As above, we assume that  ${\O \subset \R^3}$ is a bounded domain of class $C^{1,1}$. 
Recall the following definitions; see \cite{BS1, BS2}.

\begin{definition}\label{def1}
Let $\u \in L_2(\O;\C^3)$ and $\div \u \in L_2(\O)$. Then, by definition,  the relation $\u_n \vert_{\partial \O}=0$ means that
$$
(\u, \nabla \omega)_{L_2(\O)} = - (\div \u, \omega)_{L_2(\O)},\quad \forall \omega \in H^1(\O). 
$$
\end{definition}

\begin{definition}\label{def2}
Let $\u \in L_2(\O;\C^3)$ and  $\rot \u \in L_2(\O;\C^3)$. Then, by definition, the relation $\u_\tau \vert_{\partial \O}=0$ means that 
$$
(\u, \rot \z)_{L_2(\O)} = (\rot \u, \z)_{L_2(\O)},\quad \forall \z \in L_2(\O;\C^3): \ \rot \z \in L_2(\O;\C^3). 
$$
\end{definition}

Suppose that the matrix $\mu_0$ and the matrix-valued function  $\eta(\x)$ satisfy the assumptions of Subsection~1.3. 
Along with the ordinary space $L_2(\O;\C^3)$, we need to define the weighted $L_2$-spaces of vector-valued functions: the space 
$$
L_2(\O;(\eta^\eps)^{-1}) = L_2(\O;\C^3;(\eta^\eps)^{-1})
$$
with the inner product 
$$
({\mathbf f}_1, {\mathbf f}_2)_{L_2(\O;(\eta^\eps)^{-1})} = \intop_\O \langle (\eta^\eps(\x))^{-1} {\mathbf f}_1(\x), {\mathbf f}_2(\x) \rangle \,d\x
$$
and the similar space  
$$
L_2(\O;\mu_0^{-1}) = L_2(\O;\C^3;\mu_0^{-1})
$$
 with the inner product 
$$
({\mathbf f}_1, {\mathbf f}_2)_{L_2(\O;\mu_0^{-1})} = \intop_\O \langle \mu_0^{-1} {\mathbf f}_1(\x), {\mathbf f}_2(\x) \rangle \,d\x.
$$

We introduce two subspaces of divergence-free vector-valued functions in $L_2$:
\begin{align}
\label{5.1}
J(\O) :=& \bigl\{ {\mathbf u} \in L_2(\O;\C^3): \ \int_\O \langle \u, \nabla \omega \rangle \, d\x =0, \ \forall \omega \in H^1_0(\O) \bigr\},
\\
\label{5.2}
J_0(\O) :=& \bigl\{ {\mathbf u} \in L_2(\O;\C^3): \ \int_\O \langle \u, \nabla \omega \rangle \, d\x =0, \ \forall \omega \in H^1(\O) \bigr\}.
\end{align}
The subspace \eqref{5.1} consists of all functions ${\mathbf u} \in L_2(\O;\C^3)$ such that $\div \u =0$ in the sense of distributions. 
The subspace \eqref{5.2} consists of all functions ${\mathbf u} \in L_2(\O;\C^3)$ such that $\div \u =0$ and  
$\u_n\vert_{\partial \O}=0$ (in the sense of Definition \ref{def1}).

\subsection{Statement of the problem}

We study an electromagnetic resonator filling the domain~$\O$.
Suppose that the magnetic permeability is given by the constant matrix $\mu_0$, and the dielectric permittivity is given by the matrix  
$\eta^\eps(\x) = \eta(\eps^{-1}\x)$.
The intensities of the electric and magnetic fields are denoted by $\u_\eps(\x)$ and $\v_\eps(\x)$, respectively.
The electric and magnetic displacement vectors are expressed in terms of $\u_\eps$, $\v_\eps$ by  
$\w_\eps(\x) = \eta^\eps(\x) \u_\eps(\x)$, $\z_\eps(\x)= \mu_0 \v_\eps(\x)$. 

The operator $M_\eps$ written in terms of the displacement vectors acts in the space  $J(\O) \oplus J_0(\O)$ considered as a subspace of
$$
L_2\big(\O;\C^3; (\eta^\eps)^{-1}\big) \oplus L_2\big(\O;\C^3;\mu_0^{-1}\big),
$$
 and is given by 
\begin{equation}\label{M_eps}
M_\eps = \begin{pmatrix} 0 & i \rot \mu_0^{-1} \cr -i \rot (\eta^\eps)^{-1} & 0 \end{pmatrix}
\end{equation}
on the domain 
\begin{equation}\label{Dom_M_eps}
\begin{aligned}
\Dom M_\eps = &\big\{ (\w,\z) \in J(\O) \oplus J_0(\O): \ \rot (\eta^\eps)^{-1} \w \in L_2(\O;\C^3),
\\ 
&\rot \mu_0^{-1} \z \in L_2(\O;\C^3),
\ ((\eta^\eps)^{-1} \w)_\tau \vert_{\partial \O}=0 \big\}. 
\end{aligned}
\end{equation}
Here the boundary condition for $\w$ is understood in the sense of Definition \ref{def2}.

The operator $M_\eps$ is selfadjoint; see \cite{BS1,BS2}. The point $\lambda =i$ is a regular point for the operator $M_\eps$. 
\textit{Our goal} is to study the behavior of the resolvent $(M_\eps - i I)^{-1}$.
In other words, we are interested in the behavior of the solutions $(\w_\eps,\z_\eps)$ of the equation  
\begin{equation}\label{M1}
(M_\eps - iI) \begin{pmatrix} \w_\eps \cr \z_\eps \end{pmatrix} = \begin{pmatrix} \q \cr \r \end{pmatrix}, \quad \q \in J(\O),\ \r \in J_0(\O),
\end{equation}
and also in the behavior of the fields $\u_\eps = (\eta^\eps)^{-1}\w_\eps$ and $\v_\eps = \mu_0^{-1}\z_\eps$.  
In details, the Maxwell system \eqref{M1} takes the form 
\begin{equation}\label{M02}
\left\{\begin{matrix}
& i \rot \mu_0^{-1} \z_\eps - i \w_\eps = \q,
\cr
& -i \rot (\eta^\eps)^{-1} \w_\eps - i \z_\eps = \r,
\cr
& \div \w_\eps=0,\ \div \z_\eps =0,
\cr
& ((\eta^\eps)^{-1} \w_\eps)_\tau\vert_{\partial \O} =0,\  (\z_\eps)_n \vert_{\partial \O} =0.
\end{matrix}
\right.
\end{equation}

Let $\eta^0$ be the effective matrix defined by  \eqref{v_eq6} and \eqref{eta0}. Let $M^0$ be the effective Maxwell operator 
with the coefficients $\eta^0$ and $\mu_0$ (defined similarly to \eqref{M_eps} and \eqref{Dom_M_eps}).
Consider the homogenized equation
\begin{equation}\label{M0}
(M^0 - iI) \begin{pmatrix} \w_0 \cr \z_0 \end{pmatrix} = \begin{pmatrix} \q \cr \r \end{pmatrix}, 
\end{equation}
and define the functions $\u_0 = (\eta^0)^{-1} \w_0$ and $\v_0 = \mu_0^{-1} \z_0$.
In details, \eqref{M0} takes the form  
\begin{equation}\label{M2}
\left\{\begin{matrix}
& i \rot \mu_0^{-1} \z_0 - i \w_0 = \q,
\cr
& -i \rot (\eta^0)^{-1} \w_0 - i \z_0 = \r,
\cr
& \div \w_0=0,\ \div \z_0 =0,
\cr
& ((\eta^0)^{-1} \w_0)_\tau\vert_{\partial \O} =0,\  (\z_0)_n \vert_{\partial \O} =0.
\end{matrix}
\right.
\end{equation}

The classical results (see \cite{BeLPap,  Sa, ZhKO}) show that \textit{the fields $\u_\eps$, $\w_\eps$
$\v_\eps$, $\z_\eps$ weakly converge in $L_2(\O;\C^3)$ to the corresponding homogenized fields $\u_0$, $\w_0$, $\v_0$, $\z_0$, as $\eps \to 0$}.

\subsection{The case where $\q=0$. Reduction of the problem to the model second order equation}
If $\q=0$, the system \eqref{M02} takes the form 
\begin{equation}\label{M3}
\left\{\begin{matrix}
& \w_\eps = \rot \mu_0^{-1} \z_\eps,
\cr
& \rot (\eta^\eps)^{-1} \w_\eps + \z_\eps = i \r,
\cr
& \div \w_\eps=0,\ \div \z_\eps =0,
\cr
& ((\eta^\eps)^{-1} \w_\eps)_\tau\vert_{\partial \O} =0,\  (\z_\eps)_n \vert_{\partial \O} =0.
\end{matrix}
\right.
\end{equation}
From \eqref{M3} it follows that $\z_\eps$ is the solution of the problem
\begin{equation*}
\left\{\begin{matrix}
& \rot (\eta^\eps)^{-1} \rot \mu_0^{-1} \z_\eps + \z_\eps = i \r, \quad \div \z_\eps =0,
\cr
& (\z_\eps)_n \vert_{\partial \O} =0, \quad ((\eta^\eps)^{-1} \rot \mu_0^{-1}\z_\eps)_\tau\vert_{\partial \O} =0.
\end{matrix}
\right.
\end{equation*}
Then the function $\f_\eps:= \mu_0^{-1/2} \z_\eps$ is the solution of the problem  
\begin{equation}\label{M5}
\left\{\begin{matrix}
& \mu_0^{-1/2} \rot (\eta^\eps)^{-1} \rot \mu_0^{-1/2} \f_\eps + \f_\eps = i \mu_0^{-1/2} \r, \quad \div \mu_0^{1/2}\f_\eps =0,
\cr
& (\mu_0^{1/2}\f_\eps)_n \vert_{\partial \O} =0, \quad ((\eta^\eps)^{-1} \rot \mu_0^{-1/2}\f_\eps)_\tau\vert_{\partial \O} =0.
\end{matrix}
\right.
\end{equation}
Obviously, the solution of problem \eqref{M5} is simultaneously the solution of the problem  
\begin{equation}\label{M6}
\left\{\begin{matrix}
& \mu_0^{-1/2} \rot (\eta^\eps)^{-1} \rot \mu_0^{-1/2} \f_\eps - \mu_0^{1/2} \nabla \div \mu_0^{1/2} \f_\eps+ \f_\eps = i \mu_0^{-1/2} \r, 
\cr
& (\mu_0^{1/2}\f_\eps)_n \vert_{\partial \O} =0, \quad ((\eta^\eps)^{-1} \rot \mu_0^{-1/2}\f_\eps)_\tau\vert_{\partial \O} =0.
\end{matrix}
\right.
\end{equation}
(Note that the condition $\r \in J_0(\O;\C^3)$ automatically implies that \hbox{$\div \mu_0^{1/2}\f_\eps =0$}.)
The problem \eqref{M6} coincides with \eqref{phi_eps_pr} if $\nu=1$ and $\FF = i \mu_0^{-1/2} \r$.

Let $L_\eps$ be the operator defined in Subsection \ref{Sec2.2} with the coefficients $\mu_0$, $\eta^\eps$, and $\nu=1$. 
We see that the solution $\f_\eps$ of problem \eqref{M5} can be written as $\f_\eps = i (L_\eps +I)^{-1} (\mu_0^{-1/2} \r)$. 

Similarly, in the case where $\q=0$, the effective system \eqref{M2} takes the form
\begin{equation}\label{M7}
\left\{\begin{matrix}
& \w_0 = \rot \mu_0^{-1} \z_0,
\cr
& \rot (\eta^0)^{-1} \w_0 + \z_0 = i \r,
\cr
& \div \w_0 =0,\ \div \z_0 =0,
\cr
& ((\eta^0)^{-1} \w_0)_\tau\vert_{\partial \O} =0,\  (\z_0)_n \vert_{\partial \O} =0.
\end{matrix}
\right.
\end{equation}
Then $\z_0$ is the solution of the problem 
\begin{equation*}
\left\{\begin{matrix}
& \rot (\eta^0)^{-1} \rot \mu_0^{-1} \z_0 + \z_0 = i \r, \quad \div \z_0 =0,
\cr
& (\z_0)_n \vert_{\partial \O} =0, \quad ((\eta^0)^{-1} \rot \mu_0^{-1}\z_0)_\tau\vert_{\partial \O} =0.
\end{matrix}
\right.
\end{equation*}
Hence, the function $\f_0:= \mu_0^{-1/2} \z_0$ is the solution of the problem
\begin{equation}\label{M9}
\left\{\begin{matrix}
& \mu_0^{-1/2} \rot (\eta^0)^{-1} \rot \mu_0^{-1/2} \f_0 + \f_0 = i \mu_0^{-1/2} \r, \quad \div \mu_0^{1/2}\f_0 =0,
\cr
& (\mu_0^{1/2}\f_0)_n \vert_{\partial \O} =0, \quad ((\eta^0)^{-1} \rot \mu_0^{-1/2}\f_0)_\tau\vert_{\partial \O} =0.
\end{matrix}
\right.
\end{equation}
Clearly, the solution of problem \eqref{M9} is simultaneously the solution of the problem
\begin{equation}\label{M10}
\left\{\begin{matrix}
& \mu_0^{-1/2} \rot (\eta^0)^{-1} \rot \mu_0^{-1/2} \f_0 - \mu_0^{1/2} \nabla \div \mu_0^{1/2} \f_0+ \f_0 = i \mu_0^{-1/2} \r, 
\cr
& (\mu_0^{1/2}\f_0)_n \vert_{\partial \O} =0, \quad ((\eta^0)^{-1} \rot \mu_0^{-1/2}\f_0)_\tau\vert_{\partial \O} =0.
\end{matrix}
\right.
\end{equation}
(The condition $\r \in J_0(\O;\C^3)$ automatically implies that $\div \mu_0^{1/2}\f_0 =0$.)
The problem \eqref{M10} coincides with  \eqref{phi_0_pr}, if $\underline{\nu}=1$ and $\FF = i \mu_0^{-1/2} \r$.

Let $L^0$ be the effective operator defined in Subsection \ref{Sec2.3} with the coefficients $\mu_0$, $\eta^0$, and
 $\underline{\nu} =1$.
Then the solution $\f_0$ of  problem \eqref{M9} can be written as $\f_0= i (L^0+I)^{-1}(\mu_0^{-1/2} \r)$.

\subsection{Results for the Maxwell system}

Applying Theorem \ref{th1_O} and using the relations  
$$
\z_\eps = \mu_0^{1/2} \f_\eps,\quad \v_\eps = \mu_0^{-1/2} \f_\eps,\quad \z_0 = \mu_0^{1/2} \f_0,\quad \v_0 = \mu_0^{-1/2} \f_0,\quad \FF = i \mu_0^{-1/2}\r,
$$
for $0<\eps \le \eps_1$ we obtain 
\begin{align}
\label{MM1}
&\| \z_\eps - \z_0\|_{L_2(\O)} \le |\mu_0|^{1/2} \| \f_\eps - \f_0\|_{L_2(\O)} \le {\mathcal C}_1 |\mu_0|^{1/2}|\mu_0^{-1}|^{1/2} \eps \|\r\|_{L_2(\O)}, 
\\
\label{MM2}
&\| \v_\eps - \v_0\|_{L_2(\O)} \le |\mu_0^{-1}|^{1/2} \| \f_\eps - \f_0\|_{L_2(\O)} \le {\mathcal C}_1 |\mu_0^{-1}| \eps \|\r\|_{L_2(\O)}. 
\end{align}

Now we apply Theorem~\ref{th2_O}. If $\nu=1$, the solution of equation~\eqref{v_eq13} is equal to zero: $\rho(\x)=0$, whence the function  \eqref{corr2_O} takes the form  
$$
\bpsi_\eps = \f_0 + \eps \mu_0^{-1/2} \Psi^\eps S_\eps \rot \mu_0^{-1/2} \wt{\f}_0.
$$
Denote $\wt{\w}_0 = \rot \mu_0^{-1/2} \wt{\f}_0$. Clearly, $\wt{\w}_0$ is an extension of the function $\w_0 = \rot \mu_0^{-1/2} \f_0$. 
From \eqref{res3} it follows that for $0< \eps \le \eps_1$ we have
\begin{align}
\label{MM3}
&\| \z_\eps - \z_0 - \eps \Psi^\eps S_\eps \wt{\w}_0 \|_{H^1(\O)} 
\le {\mathcal C}_2 |\mu_0|^{1/2}|\mu_0^{-1}|^{1/2} \eps^{1/2} \|\r\|_{L_2(\O)}, 
\\
\label{MM4}
&\| \v_\eps - \v_0 - \eps \mu_0^{-1} \Psi^\eps S_\eps \wt{\w}_0 \|_{H^1(\O)} \le  {\mathcal C}_2 |\mu_0^{-1}| \eps^{1/2} \|\r\|_{L_2(\O)}. 
\end{align}

Next, the first equation in \eqref{M3} implies that  $\w_\eps = \rot \mu_0^{-1/2} \f_\eps$, 
whence $\u_\eps = (\eta^\eps)^{-1} \rot \mu_0^{-1/2} \f_\eps$.
Similarly, $\w_0 = \rot \mu_0^{-1/2} \f_0$  and $\u_0 = (\eta^0)^{-1} \rot \mu_0^{-1/2} \f_0$.
Applying Theorem \ref{th3_O}, we see that 
\begin{equation}
\label{MM5}
\| \u_\eps - \u_0 - \Sigma^\eps {\w}_0 \|_{L_2(\O)} 
\le {\mathcal C}_3 |\mu_0^{-1}|^{1/2} \eps^{1/2} \|\r\|_{L_2(\O)} 
\end{equation}
for $0< \eps \le \eps_1$. Recalling that $\Sigma(\x)$ is the matrix with the columns $\nabla \Phi_j(\x)$, $j=1,2,3,$
where $\Phi_j(\x)$ is the \hbox{$\Gamma$-periodic} solution of problem \eqref{1.34a}, 
we represent this matrix in the form $\Sigma(\x)= \Xi(\x) (\eta^0)^{-1}$, where $\Xi(\x)$ is the matrix with the columns  
$\nabla Y_j(\x)$, $j=1,2,3$, and $Y_j(\x)$ is the \hbox{$\Gamma$-periodic} solution of the problem  
\begin{equation}
\label{MM6}
\div \eta(\x) (\nabla Y_j(\x)+ \wt{\e}_j)=0, 
\quad \intop_\Omega Y_j(\x)\, d\x=0.
\end{equation}
Since $(\eta^0)^{-1}\w_0=\u_0$, we rewrite \eqref{MM5} as 
\begin{equation}
\label{MM7}
\| \u_\eps - \u_0 - \Xi^\eps {\u}_0 \|_{L_2(\O)} 
\le {\mathcal C}_3 |\mu_0^{-1}|^{1/2} \eps^{1/2} \|\r\|_{L_2(\O)}. 
\end{equation}
Combining the relations $\w_\eps = \eta^\eps \u_\eps$, $\w_0 = \eta^0 \u_0$, and \eqref{MM7}, we deduce
\begin{equation}
\label{MM8}
\| \w_\eps - \w_0 - \Upsilon^\eps {\w}_0 \|_{L_2(\O)} 
\le {\mathcal C}_3 |\mu_0^{-1}|^{1/2} \|\eta\|_{L_\infty}\eps^{1/2} \|\r\|_{L_2(\O)},
\end{equation}
where $\Upsilon(\x) = \wt{\eta}(\x)(\eta^0)^{-1}-\1$, $\wt{\eta}(\x):= \eta(\x)(\Xi(\x)+\1)$.

Relations \eqref{MM1}--\eqref{MM4}, \eqref{MM7}, and \eqref{MM8} imply the following final result about homogenization of the solutions of the Maxwell system with $\q=0$. 

\begin{theorem}\label{th_Maxwell}
Suppose that $\O \subset \R^3$ is a bounded domain of class $C^{1,1}$.
Suppose that $\mu_0$ is a positive matrix with real entries and $\eta(\x)$ is a $\Gamma$-periodic matrix-valued function with real entries such that 
$\eta(\x)>0$ and $\eta,\eta^{-1}\in L_\infty$. Let $(\w_\eps, \z_\eps)$ be the solution of system \eqref{M3} with $\r \in J_0(\O;\C^3)$. 
Let $\u_\eps = (\eta^\eps)^{-1}\w_\eps$ and $\v_\eps = \mu_0^{-1}\z_\eps$. Suppose that $(\w_0, \z_0)$ is the solution of the homogenized system   \eqref{M7} with the effective matrix $\eta^0$ defined by \eqref{v_eq6} and \eqref{eta0}. 
Let $\u_0 = (\eta^0)^{-1}\w_0$ and $\v_0 = \mu_0^{-1}\z_0$. 
Suppose that $\eps_1$ is subject to Condition~\textnormal{\ref{condition varepsilon}}. Then the following statements hold. 

\noindent 
\emph{1)}  The fields $\v_\eps,\  \z_\eps$ converge to $\v_0,\ \z_0$, respectively, in the $L_2(\O;\C^3)$-norm, as $\eps \to 0$.
Moreover, for $0< \eps \le \eps_1$ we have
$$
\begin{aligned}
&\| \v_\eps - \v_0  \|_{L_2(\O)}  \le {\mathfrak C}_1  \eps \|\r\|_{L_2(\O)}, 
\\
&\| \z_\eps - \z_0  \|_{L_2(\O)} \le {\mathfrak C}_2  \eps \|\r\|_{L_2(\O)}.
\end{aligned}
$$

\noindent 
\emph{2)} Let $S_\eps$ be the Steklov smoothing operator defined by \eqref{S_eps}. 
Let $\Psi(\x)$ be the matrix with the columns $\rot \p_j(\x)$, $j=1,2,3,$
where $\p_j$ is the \hbox{$\Gamma$-periodic} solution of problem \eqref{v_eq11}. 
Let $\wt{\w}_0(\x)$ be the extension of the function  $\w_0(\x)$ to $\R^3$ constructed above. 
Then for $0< \eps \le \eps_1$ the fields $\v_\eps, \ \z_\eps$ satisfy approximations in the  
$H^1(\O;\C^3)$-norm with the following error estimates\emph{:}
\begin{align}
\label{MM18}
&\| \v_\eps - \v_0 - \eps \mu_0^{-1} \Psi^\eps S_\eps \wt{\w}_0 \|_{H^1(\O)}  \le {\mathfrak C}_3  \eps^{1/2} \|\r\|_{L_2(\O)}, 
\\
\label{MM19}
&\| \z_\eps - \z_0 - \eps \Psi^\eps S_\eps \wt{\w}_0 \|_{L_2(\O)} \le {\mathfrak C}_4  \eps^{1/2} \|\r\|_{L_2(\O)}.
\end{align}

\noindent 
\emph{3)}  Let $\Xi(\x)$ be the matrix with the columns $\nabla Y_j(\x)$, $j=1,2,3,$
where $Y_j$ is the \hbox{$\Gamma$-periodic} solution of problem \eqref{MM6}. 
Let $\wt{\eta}(\x):= \eta(\x)(\Xi(\x)+\1)$ and $\Upsilon(\x):= \wt{\eta}(\x)(\eta^0)^{-1} - \1$.
Then for $0< \eps \le \eps_1$ the fields $\u_\eps, \ \w_\eps$ satisfy approximations in the  
$L_2(\O;\C^3)$-norm with the following error estimates\emph{:}
\begin{align}
\label{MM20}
&\| \u_\eps - \u_0 -  \Xi^\eps  {\u}_0 \|_{L_2(\O)}  \le {\mathfrak C}_5  \eps^{1/2} \|\r\|_{L_2(\O)}, 
\\
\label{MM21}
&\| \w_\eps - \w_0 - \Upsilon^\eps {\w}_0 \|_{L_2(\O)} \le {\mathfrak C}_6  \eps^{1/2} \|\r\|_{L_2(\O)}.
\end{align}

\noindent
The constants ${\mathfrak C}_1$, ${\mathfrak C}_2$, ${\mathfrak C}_3$, ${\mathfrak C}_4$, ${\mathfrak C}_5$, and ${\mathfrak C}_6$
depend only on $|\mu_0|$, $|\mu_0^{-1}|$, $\|\eta\|_{L_\infty}$, $\|\eta^{-1}\|_{L_\infty}$, the parameters of the lattice $\Gamma$, and the domain~$\O$.
\end{theorem}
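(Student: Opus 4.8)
The plan is to exploit the reduction performed above, which identifies the magnetic field with the resolvent of the model operator $L_\eps$ of Section \ref{Sec2}. Setting $\f_\eps := \mu_0^{-1/2}\z_\eps$ and $\f_0 := \mu_0^{-1/2}\z_0$, and observing that the hypothesis $\r \in J_0(\O)$ forces the divergence-free conditions $\div(\mu_0^{1/2}\f_\eps) = 0$ and $\div(\mu_0^{1/2}\f_0) = 0$ to hold automatically, I would first note that $\f_\eps$ solves problem \eqref{phi_eps_pr} and $\f_0$ solves \eqref{phi_0_pr} with the choices $\nu = 1$, $\underline\nu = 1$, $\FF = i\mu_0^{-1/2}\r$; hence $\f_\eps = i(L_\eps + I)^{-1}(\mu_0^{-1/2}\r)$, $\f_0 = i(L^0 + I)^{-1}(\mu_0^{-1/2}\r)$, and $\|\FF\|_{L_2(\O)} \le |\mu_0^{-1}|^{1/2}\|\r\|_{L_2(\O)}$. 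Since every physical field is a linear image of $\f_\eps$ (resp.\ $\f_0$) under a constant matrix or a first-order operator — $\z_\eps = \mu_0^{1/2}\f_\eps$, $\v_\eps = \mu_0^{-1/2}\f_\eps$, $\w_\eps = \rot(\mu_0^{-1/2}\f_\eps)$, $\u_\eps = (\eta^\eps)^{-1}\w_\eps$, and likewise with $\eta^0$ for the homogenized fields — each assertion of the theorem will follow from one of Theorems \ref{th1_O}--\ref{th3_O} applied with this $\FF$, after multiplying by the relevant matrix norms and absorbing the factor $|\mu_0^{-1}|^{1/2}$ on the right.

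For part 1, I would simply invoke Theorem \ref{th1_O}, which gives $\|\f_\eps - \f_0\|_{L_2(\O)} \le \mathcal C_1\eps\|\FF\|_{L_2(\O)}$, and multiply by $|\mu_0|^{1/2}$ for $\z$ and by $|\mu_0^{-1}|^{1/2}$ for $\v$; the claimed strong $L_2$-convergence is then an immediate consequence of the $O(\eps)$ bound (it upgrades the classical weak convergence recalled before the theorem). For part 2, I would note that with $\nu = 1$ the cell solution $\rho$ of \eqref{v_eq13} vanishes, so the corrector \eqref{corr2_O} collapses to its $\Psi$-term: $\bpsi_\eps = \f_0 + \eps\mu_0^{-1/2}\Psi^\eps S_\eps\wt{\w}_0$, where $\wt{\w}_0 := \rot(\mu_0^{-1/2}\wt{\f}_0)$ is an $H^1(\R^3)$-extension of $\w_0$ (legitimate because $\f_0 \in H^2(\O)$). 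Multiplying the bound of Theorem \ref{th2_O} by $\mu_0^{1/2}$, resp.\ $\mu_0^{-1/2}$, then yields \eqref{MM18} and \eqref{MM19}.

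Part 3 is the step that calls for genuine, if modest, bookkeeping. Theorem \ref{th3_O} provides $\|\u_\eps - \u_0 - \Sigma^\eps\w_0\|_{L_2(\O)} \le \mathcal C_3\eps^{1/2}\|\FF\|_{L_2(\O)}$ with $\w_0 = \rot(\mu_0^{-1/2}\f_0)$ and $\Sigma$ the matrix of $\nabla\Phi_j$, $\Phi_j$ solving \eqref{1.34a} with $\c_j = (\eta^0)^{-1}\wt{\e}_j$. To make the corrector act on the physical field $\u_0 = (\eta^0)^{-1}\w_0$ rather than on $\w_0$, I would use the linearity of \eqref{1.34a} in $\c_j$ to factor $\Sigma(\x) = \Xi(\x)(\eta^0)^{-1}$, where the columns of $\Xi$ are $\nabla Y_j$ with $Y_j$ solving \eqref{MM6}; then $\Sigma^\eps\w_0 = \Xi^\eps\u_0$, which is \eqref{MM20}. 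For $\w_\eps$ I would write $\w_\eps - \w_0 = \eta^\eps\u_\eps - \eta^0\u_0 = \eta^\eps(\u_\eps - \u_0 - \Xi^\eps\u_0) + \bigl(\eta^\eps(\Xi^\eps + \1) - \eta^0\bigr)\u_0$; the first summand is controlled by $\|\eta\|_{L_\infty}$ times the estimate just obtained, and the second is exactly $\Upsilon^\eps\w_0$ for $\Upsilon = \wt{\eta}(\eta^0)^{-1} - \1$, $\wt{\eta} = \eta(\Xi + \1)$, since $\w_0 = \eta^0\u_0$; this gives \eqref{MM21}. The main obstacle, such as it is, is not analytic — all the hard estimates have already been secured in Sections \ref{Sec3} and \ref{Sec4} for the model operator $L_\eps$ — but rather the need to verify that the rewriting $\Sigma = \Xi(\eta^0)^{-1}$ and the definition of $\Upsilon$ are consistent with the cell problems \eqref{1.34a} and \eqref{MM6}, and that the extra multiplicative factors $\eta^\eps$, $\eta^0$ appearing in part 3 do not degrade the $O(\eps^{1/2})$ error.
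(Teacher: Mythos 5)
Your proposal is correct and follows essentially the same route as the paper: reduce to $\f_\eps = i(L_\eps+I)^{-1}(\mu_0^{-1/2}\r)$, apply Theorems \ref{th1_O}--\ref{th3_O} with $\nu=1$ (so $\rho=0$ and the corrector collapses to its $\Psi$-term), and in part 3 factor $\Sigma=\Xi(\eta^0)^{-1}$ by linearity of the cell problem and pass from $\u$ to $\w$ via $\w_\eps-\w_0-\Upsilon^\eps\w_0=\eta^\eps(\u_\eps-\u_0-\Xi^\eps\u_0)$, exactly as in the paper's derivation of \eqref{MM1}--\eqref{MM8}.
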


\begin{remark}
1) We see that there is no symmetry in the results for the magnetic fields $\v_\eps,\ \z_\eps$ and the electric fields $\u_\eps,\ \w_\eps$. 
The magnetic fields converge in the $L_2$-norm, with error estimates being of sharp order $O(\eps)$, and admit approximations in  
$H^1$ with the error terms of order $O(\sqrt{\eps})$. The electric fields are approximated only in $L_2$ with the error terms of order
$O(\sqrt{\eps})$. This is explained by the absence of symmetry in the statement of the problem: we assume that  
$\q=0$ in the right-hand side of system \eqref{M1}. For this reason, the function $\z_\eps$ is a solution of the auxiliary second order equation, 
while $\w_\eps$ is given in terms of the derivatives of this solution.  

 2) Note that the mean values of the periodic matrix-valued functions $\Xi(\x)$ and $\Upsilon(\x)$ are equal to zero. Therefore, by the mean value property,
the correction terms $\Xi^\eps  {\u}_0$ and $\Upsilon^\eps {\w}_0$  weakly tend to zero in $L_2$. Then relations \eqref{MM20} and \eqref{MM21} imply that the fields $\u_\eps$ and $\w_\eps$ weakly converge in $L_2$ to $\u_0$ and $\w_0$, respectively. This agrees with the classical results.
The terms $\Xi^\eps  {\u}_0$ and $\Upsilon^\eps {\w}_0$ can be interpreted as the zero order correctors.

 3) Similarly,  the correction terms $\eps \mu_0^{-1} \Psi^\eps S_\eps \wt{\w}_0$
and $\eps \Psi^\eps S_\eps \wt{\w}_0$ from \eqref{MM18}, \eqref{MM19} weakly tend to zero in $H^1$. Hence, the fields  
$\v_\eps$ and $\z_\eps$ weakly converge in $H^1$ to $\v_0$ and $\z_0$, respectively.
\end{remark}

Now, we distinguish the special cases. By Proposition \ref{prop_cases} and Remark \ref{rem_eta0}, from Theorem \ref{th_Maxwell}
we deduce the following statement.

\begin{proposition}

\noindent\emph{1)} Suppose that $\eta^0= \overline{\eta}$, i.~e., the columns of the matrix $\eta(\x)$ are divergence free.
Then for $0< \eps \le \eps_1$ we have
$$
\| \u_\eps - \u_0  \|_{L_2(\O)} \le {\mathfrak C}_5 \eps^{1/2} \|\r\|_{L_2(\O)}.
$$

\noindent\emph{2)} Suppose that $\eta^0= \underline{\eta}$, i.~e., the columns of the matrix $\eta(\x)^{-1}$ are potential.
Then for $0< \eps \le \eps_1$ we have  
$$
\begin{aligned}
\| \v_\eps - \v_0 \|_{H^1(\O)} \le \mathfrak{C}_3 \eps^{1/2} \|\r \|_{L_2(\O)},
\\
\| \z_\eps - \z_0 \|_{H^1(\O)} \le \mathfrak{C}_4 \eps^{1/2} \|\r \|_{L_2(\O)}.
\end{aligned}
$$
\end{proposition}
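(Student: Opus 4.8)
The plan is to deduce both parts directly from the already established Theorem~\ref{th_Maxwell}: in each case the hypothesis on $\eta^0$ forces one of the corrector matrices appearing there to vanish identically, and the corresponding approximation then collapses to a plain convergence estimate. So the only work is to invoke Proposition~\ref{prop_cases} and Remark~\ref{rem_eta0} to pin down which corrector degenerates, and then read off the conclusion from the relevant estimate of Theorem~\ref{th_Maxwell}.

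For part~1), I would assume $\eta^0 = \overline{\eta}$. By Proposition~\ref{prop_cases}(1) the columns of $\eta(\x)$ are divergence-free, and Remark~\ref{rem_eta0}(1) then gives $\Sigma(\x)\equiv 0$. Since the matrix $\Xi(\x)$ is defined through $\Sigma(\x) = \Xi(\x)(\eta^0)^{-1}$ (see the passage between \eqref{MM5} and \eqref{MM6}) and $\eta^0$ is invertible, this yields $\Xi(\x)\equiv 0$, so the corrector $\Xi^\eps\u_0$ disappears from \eqref{MM20}. That estimate then reads $\|\u_\eps-\u_0\|_{L_2(\O)}\le{\mathfrak C}_5\eps^{1/2}\|\r\|_{L_2(\O)}$, which is the assertion.

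For part~2), I would assume $\eta^0 = \underline{\eta}$, so that by Proposition~\ref{prop_cases}(2) the columns of $\eta(\x)^{-1}$ are potential. Remark~\ref{rem_eta0}(2) (here $\nu\equiv 1$, as in the reduction of the Maxwell system) then gives $\Psi(\x)\equiv 0$; consequently the correction terms $\eps\mu_0^{-1}\Psi^\eps S_\eps\wt{\w}_0$ in \eqref{MM18} and $\eps\Psi^\eps S_\eps\wt{\w}_0$ in the $H^1$-estimate \eqref{MM3} for $\z_\eps$ both vanish, and these two bounds become $\|\v_\eps-\v_0\|_{H^1(\O)}\le{\mathfrak C}_3\eps^{1/2}\|\r\|_{L_2(\O)}$ and $\|\z_\eps-\z_0\|_{H^1(\O)}\le{\mathfrak C}_4\eps^{1/2}\|\r\|_{L_2(\O)}$.

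I do not anticipate any genuine obstacle: the argument is pure bookkeeping, and the only point to watch is the correct identification of the degenerating corrector in each case. In particular, under the hypothesis of part~2) the electric-field approximations \eqref{MM20}, \eqref{MM21} do \emph{not} simplify, since there $\Sigma$, equivalently $\Xi$, need not vanish: one has $\nabla\Phi_j = \eta^{-1}\wt{\e}_j - \c_j$, a generally nonzero periodic gradient. This is precisely why the Proposition restricts its electric-field claim to part~1).
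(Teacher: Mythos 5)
Your proof is correct and is essentially the paper's own argument: the paper deduces the Proposition from Theorem~\ref{th_Maxwell} via Proposition~\ref{prop_cases} and Remark~\ref{rem_eta0} in exactly the way you spell out ($\Sigma\equiv 0$, hence $\Xi\equiv 0$, in case 1; $\Psi\equiv 0$ in case 2, with $\nu\equiv 1$). Your closing observation that the electric-field correctors do not degenerate in case 2 is accurate, and citing the $H^1$-estimate \eqref{MM3} for $\z_\eps$ is the right reading (the norm in \eqref{MM19} as printed is a typo for $H^1$).
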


\end{document}